\newcommand{\opr}{\mathrm{opR}}
\newcommand{\dpr}{\mathrm{dpR}}
\newcommand{\opd}{\mathrm{opD}}
\newcommand{\UU}{\mathbb{U}}
\newcommand{\mlo}{\mathrm{MLO}}
\newcommand{\OG}{\mathrm{OG}}
\newcommand{\EM}{\textsf{EM}}
\renewcommand{\qtp}{\mathrm{qftp}}
\renewcommand{\C}{\mathcal{C}}
\begin{document}

\title[op-Dimension]{On a Common Generalization of Shelah's 2-Rank, dp-Rank, and o-Minimal Dimension.}
\author{Vincent Guingona,\\ Cameron Donnay Hill}
\address[Guingona]{University of Notre Dame \\ Department of Mathematics \\ 255 Hurley, Notre Dame, IN 46556}
\email{guingona.1@nd.edu}
\urladdr{http://www.nd.edu/~vguingon/}
\address[Hill]{Wesleyan University \\ Department of Mathematics and Computer Science \\  45 Wyllys Avenue, Middletown, CT 06459}
\email{cdhill@wesleyan.edu}
\date{\today}
\thanks{2010 \emph{Mathematics Subject Classification}. Primary: 03C45}

\begin{abstract}
 In this paper, we build a dimension theory related to Shelah's $2$-rank, dp-rank, and o-minimal dimension.  We call this dimension op-dimension.  We exhibit the notion of the $n$-multi-order property, generalizing the order property, and use this to create op-rank, which generalizes $2$-rank.  From this we build op-dimension.  We show that op-dimension bounds dp-rank, that op-dimension is sub-additive, and op-dimension generalizes o-minimal dimension in o-minimal theories.
\end{abstract}

\maketitle

\section*{Introduction}

At the beginning of this century, the study of dependent/NIP theories experienced something of a renaissance after a number of years of dormancy. With the exception of o-minimal theories (which are, of course, dependent, but this fact saw little actual use), most model theorists' attention had been directed towards stable and then simple theories. However, many ``natural'' algebraic examples turn out to be unstable, non-o-minimal but dependent (sometimes with stronger conditions than bare NIP), such as $p$-adic fields, definably compact groups arising in o-minimal structures, and ordered abelian groups. S. Shelah initiated a careful study of dependent/NIP theories in the series of papers, \cites{Shelah715, Shelah783, Shelah863, Shelah900}; in this work, he defined certain sub-classes of dependent theories known (aptly) as strongly-dependent theories. A key tool in the development of strong-dependence is notion of dp-rank, which to some degree, resembles the notion of weight in a stable theory. 

dp-Rank, $\dpr(-)$, though it really is not a dimension, has some aspects that make it dimension-like. In particular, dp-rank is sub-additive in the sense that $\dpr(ab/C)\leq \dpr(a/C)+\dpr(b/C)$, but without some significant effort to understand forking-dependence for a type of finite dp-rank, it can be relatively difficult initially to see $\dpr$ as a geometric construct. It is also somewhat difficult to accommodate $\dpr$ in the universe of pre-existing model-theoretic definitions. For example, a stable theory need not be strongly-dependent (have finite dp-rank for all types). Thus, while strong-dependence and dp-rank inherit ideas and intuitions from stability theory, they do not formally generalize it. Finally, dp-rank does not (to our knowledge) fit into the evolving framework of generalized-indiscernible ``collapse'' characterizations of model-theoretic dividing lines. That is to say, stability is equivalent to collapsing indiscernible sequences (linear orders) to indiscernible sets; dependence is equivalent to collapsing ordered-graph indiscernibles to indiscernible sequences; but there is no obvious analog even for theories of bounded dp-rank.

In this article, we define an analog of dp-rank -- op-dimension, $\opd$ -- that seems to remedy some of these ``deficiencies.'' Building atop a family of local op-ranks, we find that op-dimension has a number of intuitively desirable properties, including the following:
\begin{itemize}
\item The local ranks $\opr_n$ ($0<n<\omega$) naturally generalize Shelah's 2-rank $R(-,-,2)$ to ``multi-orders'' and ``multi-cuts''; in fact, these $\opr_n$'s formally generalize the classical 2-rank in that $R(-,-,2)=\opr_1(-,-)$.

\item op-Dimension has a loosely, but still explicitly topological flavor. Indeed, in an o-minimal theory, the op-dimension of a definable set is identical to its o-minimal dimension (which also equals its dp-rank), and any theory that ``sub-interpretable'' in an o-minimal theory (in certain weak sense) must be of finite op-dimension. 

Moreover, $\opd$ retains the dimension-like aspects of dp-rank over all strongly-dependent theories; that is, $\opd$ has the appropriate monotonicity properties, and it is sub-additive. 

\item The condition of bounded op-dimension for a theory can be (fruitfully, it seems) understood as a generalization of stability to multi-orders. For each $n$, we will find ourselves with an $n$-multi-order property ($n$-MOP), and 1-MOP is precisely the classical order property. 
\end{itemize}
We will also see that the op-dimension of a type can be characterized in a manner very similar to the definition of dp-rank -- simply replacing ICT-patterns with the closely related IRD-patterns; from this observation, we will show that op-dimension is always bounded by dp-rank. Thus, it appears that op-dimension has a part to play in any strongly-dependent theory.

It should be noted that op-dimension closely resembles what Shelah calls ``$\kappa_{\text{ird}}(T)$'' (see Definition III.7.1 of \cite{Shelah}).  This is also discussed in Section 5 of \cite{Adler}.  We will touch on this fact more when we discuss IRD-patterns.

It is beyond the scope of this paper, but we must also remark, that the condition of bounded op-dimension \emph{can} be characterized by ``collapse'' of certain generalized indiscernibles (in the sense of \cite{Hill}); a little more precisely, op-dimension $n$ is equivalent to asserting that every indiscernible $(n+1)$-multi-order collapses to an indiscernible $n$-multi-order. (As this result is an example of a rather more general phenomenon, we will save it for a more extended discussion of the latter; see \cite{GuinHillScow}.)

\subsection{Outline of the Article}

In Section 1, we outline the basic definitions and results surrounding op-rank and op-dimension.  We introduce various ways of viewing op-dimension, first as a generalization of $2$-rank, then through the lens of the multi-order property, then through its relationship to dp-rank and IRD-patterns.  In Section 2, we give two proofs of the sub-additivity of op-dimension.  The first proof follows the path of \cite{KOUdpmin} using a modified notion of mutually indiscernible sequences.  The second proof uses the multi-order property and has the flavor of a stability argument (e.g., Lascar's inequality).  Finally, in Section 3, we look at op-dimension in the special case of o-minimal theories.  We show that op-dimension, dp-rank, and o-minimal dimension coincide in this case and we discuss interpretations of structures with finite op-dimension in o-minimal structures.

\subsection{Notation}

In this paper, we will work in a language $\L$, a complete $\L$-theory $T$, and a monster model $\UU$.  We will denote tuples of variables by $x$ (instead of $\overline{x}$).  By $\UU^x$ we mean all elements of $\UU$ of the sort of $x$.  For a formula $\phi(x)$, we let $\phi(x)^1 = \phi(x)$ and $\phi(x)^0 = \neg \phi(x)$.  For a set $A \subseteq \UU$, let $e\diag(A)$ denote the elementary diagram of $A$, which is a set of $\L(A)$ formulas.  Let $\diag(A)$ denote the atomic diagram of $A$.  Let $1_S$ denote the identity permutation on a set $S$.

Given a formula $\phi(x,y)$ and a set $B \subseteq \UU^y$, let $S_\phi(B)$ denote the set of all $\phi$-types over $B$, by which we mean maximally consistent subsets of the set
\[
 \{ \phi(x,b)^t : t < 2, b \in B \}.
\]
The \emph{independence dimension} of $\phi$ is the size of the largest finite set $B \subseteq \UU^y$ so that
\[
 |S_\varphi(B)| = 2^{|B|}.
\]
If no such largest set exists, we say that $\phi$ has the \emph{independence property} (IP).  If it does exist, we say $\phi$ has \emph{NIP} (sometimes called ``dependent'').  A theory has \emph{NIP} if all formulas have NIP.

\bigskip

\section{Definitions and Basic Results}

Either ``under the hood'' or explicitly, the notions of multi-order and multi-cut together play an important role in much of the work in this article. In part to motivate these definitions, we begin our discussion with a somewhat eccentric definition of classical order property (which by compactness, is equivalent to the usual statement). In practice, we will work with ``$n$-multi-orders,'' but in particular, a linear order is a 1-multi-order. In a linear order $(B,<)$, of course, a {\em cut} is a subset $X\subseteq B$ such that for all $b_0,b_1\in B$, if $b_0<b_1$ and $b_1\in X$, then $b_0\in X$. 

\begin{defn}
Let $\phi(x,y)$ be some formula of $\L(\UU)$. We say that $\phi(x,y)$ has the order property if there is an indiscernible sequence $(a_q)_{q\in\QQ}$ (of sort $x$) such that for every cut $Z$ in $\QQ$, there is a $b_Z\in \UU^y$ such that $Z = \left\{q\in \QQ: \,\,\models\phi(a_q,b_Z)\right\}$.
\end{defn}

Somewhat strangely, an $n$-multi-order is not (in general) just the cartesian product of $n$ linear orders. (Otherwise, we would not have invented the terminology, obviously.) Instead, an $n$-multi-order is a set $B$ equipped with $n$ linear orderings that do not {\em a priori} have any dependencies. Further, rather than working with arbitrary multi-orders, it is much more convenient to observe, firstly, that the common universal theory of all $n$-multi-orders, $\mlo_n$ (for some fixed $0<n<\omega$) has a model-companion $\mlo_n^*$ that has enough in common with the theory of $(\QQ,<)$ to be useful  to us (in fact, $\mlo_1^* =$ DLO).

\begin{defn}
For each $0<n<\omega$, we define two closely related theories $\mlo_n$ and $\mlo_n^*$ with signature $\{<_0,...,<_{n-1}\}$, where each $<_i$ is a binary relation symbol. $\mlo_n$ asserts that each $<_i$ is a linear order of the universe \emph{and nothing else}.

It is not difficult to verify that the class $K_n$ of all finite models of $\mlo_n$ is a \fraisse class. We take $\A_n = \left(A,<_0^{\A_n},...,<_{n-1}^{\A_n}\right)$ to be the countably infinite generic model (or \fraisse limit) associated with $K_n$, and we define $\mlo_n^* = Th(\A_n)$. Then $\mlo_n^*$ is just the model-companion of $\mlo_n$, and by old results (see \cite{Hodges}), $\mlo_n^*$ is $\aleph_0$-categorical and eliminates quantifiers.
\end{defn}

\begin{fact}
Consider the $\{<_i\}_{i<n}$ on $\QQ^n$ in which,
$$<_i^{\QQ^n} = \left\{(\aa,\bb)\in\QQ^n\times\QQ^n:a_i<b_i\right\}$$
for each $i<n$. Then, $\QQ^n$ \emph{is not} a model of $\mlo_n^*$. To see this, one may note that (for example),
$$\mlo_2^*\models\forall xy\left(x\neq y\cond\bigwedge_{i<2}(x<_iy\vee y<_ix)\right)$$
but in $\QQ^2$, $(0,1)\neq (0,2)$ but $(0,1)\not<_0(0,2)$ and $(0,2)\not<_0(0,1)$.
\end{fact}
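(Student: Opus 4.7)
The plan is to identify a single universal sentence of $\mlo_n^*$ that fails in $\QQ^n$ equipped with the coordinate-wise relations $<_i^{\QQ^n}$. Since $\mlo_n$ asserts that each $<_i$ is a strict linear order of the universe, it satisfies trichotomy, whence
$$\psi_n := \forall x\, y\Bigl(x \neq y \to \bigwedge_{i<n}\bigl(x <_i y \vee y <_i x\bigr)\Bigr)$$
is a logical consequence of $\mlo_n$, hence also of its model-companion $\mlo_n^*$. The first step is simply to record this observation, making precise the claim that the author's displayed sentence really does belong to the theory (and is in fact true in every model of $\mlo_n$).

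Next, I would verify that $\psi_n$ fails in $\QQ^n$ under the specified $<_i$'s, assuming $n\geq 2$ (for $n=1$ the theory is just DLO and $\QQ$ is a model). The obstruction is immediate from the definition: the relation $<_i^{\QQ^n}$ compares two tuples only by their $i$th coordinates, so any two distinct tuples that agree in coordinate $i$ are $<_i$-incomparable. The author's example $(0,1)\neq (0,2)$ in $\QQ^2$ witnesses this for $n=2$, since both share zeroth coordinate $0$; for general $n\geq 2$, the pair $(0,0,\dots,0)$ and $(1,0,\dots,0)$ works, as they differ but agree in coordinate $1$. This shows $\QQ^n\not\models \psi_n$, and therefore $\QQ^n\not\models \mlo_n^*$.

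There is essentially no hard step here, and no single point deserves to be called the main obstacle. The only conceptual remark worth emphasizing is that $\QQ^n$ with the coordinate-wise relations already fails to be a model of $\mlo_n$ itself, since each $<_i$ is merely a strict partial order rather than a linear order of $\QQ^n$. The instructive content of this Fact is therefore not the verification but the warning it issues: the ``multi'' in multi-order is not a product construction. A genuine $n$-multi-order carries $n$ linear orders of the whole universe that are a priori unrelated to each other, and the \fraisse limit $\mlo_n^*$ captures this genuine independence in a way that no concrete product of copies of $(\QQ,<)$ can.
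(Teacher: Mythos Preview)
Your proposal is correct and follows essentially the same argument as the paper: you exhibit the trichotomy sentence (a consequence of $\mlo_n$, hence of $\mlo_n^*$) and observe that it fails in $\QQ^n$ because distinct tuples can share a coordinate. Your additional remarks---that this already shows $\QQ^n\not\models\mlo_n$, and the generalization to $n\geq 2$---are accurate elaborations but do not depart from the paper's approach.
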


\begin{defn}
For $0<n<\omega$, any model of $\mlo_n$ is called an {\em $n$-multi-order}. 

Now, if $\B = (B,<_0,...,<_{n-1})$ is a model of $\mlo_n$, then a multi-cut (an $n$-multi-cut) in $\B$ is a tuple $(X_0,...,X_{n-1})$ such that $X_i$ is a cut in the reduct $(B,<_i)$ for each $i<n$.
\end{defn}

To conclude these introductory remarks, we note that the potential to define all cuts in an indiscernible copy of $(\QQ,<)$ is captured by Shelah's 2-rank, and insofar as $\mlo_n^*$ is similar enough to DLO, much of the insight of this article lies in the observation that analogous ranks, $\opr_n$, can be devised to capture the potential of defining all multi-cuts in a model of $\mlo_n^*$.

\subsection{op-Ranks and the op-Dimension of a Type}

In this subsection, we introduce our analogs of Shelah's 2-rank -- of which there will one rank for each $0<n<\omega$ corresponding to the number of independent linear orders in an $n$-multi-order. Several of the most basic facts about $\opr_n$s are themselves totally analogous to those regarding the 2-rank with almost identical proofs. In our presentation, to begin with anyway, we recall the definitions associated with the 2-rank and remind the reader of the relevant facts, and then we give analogous definitions for $\opr_n$ and the corresponding facts (without proof as those demonstrations are almost identical).

\begin{defn}
For a (consistent) partial type $\pi(x)$ and a finite set $\Delta$ of partitioned formulas $\phi(x,y)\in\L(\UU)$, we recall that the Shelah 2-rank of $\pi(x)$ with respect to $\Delta$ is defined as follows:
\begin{itemize}
\item $R(\pi,\Delta,2)\geq0$ in any case.
\item For a limit ordinal $\lambda$, $R(\pi,\Delta,2)\geq\lambda$ if $R(\pi,\Delta,2)\geq\alpha$ for every $\alpha<\lambda$.
\item For any ordinal $\alpha$, $R(\pi,\Delta,2)\geq\alpha+1$ if there is an instance $\phi(x,a)$ from $\Delta$ such that $R(\pi\cup\{\phi(x,a)^t\},\Delta,2)\geq\alpha$ for both $t<2$.
\end{itemize}
As usual, we define $R(\pi,\Delta,2)=\infty$ to mean that $R(\pi,\Delta,2)\geq\alpha$ for every ordinal $\alpha$. When $\Delta = \{\phi\}$ consists of a single formula, one usually writes $R(-,\phi,2)$ in place of $R(-,\{\phi\},2)$.

For an ordinal $\beta$, $\Gamma_\lambda(\pi,\phi)$ is the following set of sentences (with new constant symbols $a_\sigma$, $b_{\sigma\r\ell}$ for $\sigma\in 2^\beta$ and $\ell<\beta$):
$$\bigcup_{\sigma\in 2^\beta}\pi(a_\sigma)\,\cup\, \left\{\phi(a_\sigma,b_{\sigma\r \ell})^{\sigma(\ell)}:\sigma\in 2^\beta,\,\ell<\beta\right\}.$$
\end{defn}

The first basic result about the 2-rank is the following (coming from straightforward applications of compactness and ``coding tricks''). 

\begin{fact}
Let $\pi(x)$ be a partial type, and let $\Delta$ be a finite set of formulas of $\L(\UU)$. Also, let $\phi(x,y)\in\L(\UU)$.
\begin{enumerate}
\item By compactness, $R(\pi,\Delta,2)=\infty$ if and only if $R(\pi,\Delta,2)\geq\omega$.
\item For any ordinal $\beta$, $R(\pi,\phi,2)\geq\beta$ if and only if $\Gamma_\beta(\pi,\phi)\cup e\diag(\UU)$ is consistent. 
\end{enumerate}
Also, for any finite set $\Delta$ of formulas $\theta(x,y)$ of $\L(\UU)$, there is a single formula $\phi_\Delta(x,z)$ of $\L(\UU)$ such that $R(-,\Delta,2) = R(-,\phi_\Delta,2)$.
\end{fact}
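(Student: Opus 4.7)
The plan is to prove the three assertions in the order (3), then (2), then derive (1) as a corollary. For (3), given $\Delta = \{\theta_1(x,y_1),\dots,\theta_k(x,y_k)\}$, I would pick $k$ pairwise distinct parameters $c_1,\dots,c_k \in \UU$ (available by the size of the monster) and define, with parameter-tuple $z = (u,y_1,\dots,y_k)$,
\[
\phi_\Delta(x,z) \;:=\; \bigvee_{i=1}^{k}\bigl(u = c_i \wedge \theta_i(x,y_i)\bigr).
\]
Any instance $\phi_\Delta(x,a,b_1,\dots,b_k)$ with $a\notin\{c_1,\dots,c_k\}$ is identically false and thus cannot contribute to a 2-split (the positive side is inconsistent), while instances with $a = c_i$ coincide with $\theta_i(x,b_i)$. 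A straightforward induction on $\alpha$ then shows that an instance yields a $\phi_\Delta$-split of rank $\alpha$ iff it yields a $\Delta$-split of rank $\alpha$, so $R(-,\Delta,2) = R(-,\phi_\Delta,2)$.

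For (2), I would proceed by transfinite induction on $\beta$, strengthening the statement to quantify over all partial types $\pi$. The successor step $\beta = \alpha+1$ follows directly from the definition of rank in both directions: a witnessing instance $\phi(x,a_\emptyset)$ serves as the root splitting parameter, and the two subtrees arise by applying the inductive hypothesis to $\pi\cup\{\phi(x,a_\emptyset)^t\}$; conversely, a root parameter in a $\Gamma_{\alpha+1}$-realization witnesses a rank-$\alpha$ 2-split on each side. The substantive case is the forward direction at a limit ordinal, which requires compactness: any finite subset of $\Gamma_\beta(\pi,\phi)\cup e\diag(\UU)$ involves only finitely many branches in $2^\beta$ and only split-constraints at depths below some $\alpha < \beta$, so it can be realized inside a tree of depth $\alpha$ witnessing $R(\pi,\phi,2)\geq\alpha$, which exists by the inductive hypothesis. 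The reverse direction at a limit is a direct monotonicity argument.

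For (1), suppose $R(\pi,\Delta,2)\geq\omega$. By (3), $R(\pi,\phi_\Delta,2)\geq\omega$, so (2) gives that $\Gamma_n(\pi,\phi_\Delta)\cup e\diag(\UU)$ is consistent for every $n<\omega$. For an arbitrary ordinal $\beta$, any finite subset of $\Gamma_\beta(\pi,\phi_\Delta)$ involves only constants along finitely many branches and constraints at only finitely many depths, so after relabeling it embeds into $\Gamma_n$ for some $n<\omega$ and is realized; compactness yields consistency of $\Gamma_\beta$, hence $R(\pi,\phi_\Delta,2)\geq\beta$, hence $R(\pi,\Delta,2)\geq\beta$, for every $\beta$.

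The main obstacle lies in the careful bookkeeping for the limit step in (2): making precise the idea that every finite subset of a $\Gamma_\beta$-realization depends only on a bounded initial segment of each branch $\sigma\in 2^\beta$, so that it lives inside a $\Gamma_\alpha$-witness for some $\alpha<\beta$. Everything else reduces to routine manipulation of the recursive definition of the 2-rank.
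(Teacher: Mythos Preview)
Your proposal is correct and matches the paper's intent: the paper states this as a \emph{Fact} without proof, remarking only that it follows from ``straightforward applications of compactness and `coding tricks','' which is precisely what you carry out. The coding of $\Delta$ into a single $\phi_\Delta$ via a parameter switch, the transfinite induction for (2) with compactness at limits, and the reduction of (1) to (2) via finite fragments of $\Gamma_\beta$ are all standard and sound; the only point worth noting is that in the successor step of (2), gluing the two subtree witnesses over $e\diag(\UU)$ is justified because $e\diag(\UU)$ is complete in the shared language (or, equivalently, by saturation of the monster).
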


Now, we turn to our family of analogs of the 2-rank. For each parameter $0<n<\omega$, the ``key'' distinction between the 2-rank and $\opr_n$ lies in replacing the trees $2^{<\omega}$ -- whose nodes are maps $\sigma:k\to 2$ ($k<\omega$) -- with trees $(2^n)^{<\omega}$ whose nodes are of the form $\sigma:k\to 2^n$ ($k<\omega$); an element of $2^n$, here, represents a particular multi-cut in a model of $\mlo_n$.

\begin{defn}
For $0<n<\omega$, a (consistent) partial type $\pi(x)$ and a finite set $\Delta$ of partitioned formulas $\phi(x,y)\in\L(\UU)$, we define $\opr_n(\pi,\Delta)$ as follows:
\begin{itemize}
\item $\opr_n(\pi,\Delta)\geq0$ in any case.
\item For a limit ordinal $\lambda$, $\opr_n(\pi,\Delta)\geq\lambda$ if $\opr_n(\pi,\Delta)\geq\alpha$ for every $\alpha<\lambda$.
\item For any ordinal $\alpha$, $\opr_n(\pi,\Delta)\geq \alpha+1$ if there are instances 
$$\phi_0(x,a_0),...,\phi_{n-1}(x,a_{n-1}) \text{ from } \Delta$$
such that for each $\sigma\in 2^n$, 
$$\opr_n\left(\pi(x)\cup\left\{\bigwedge_{i<n}\phi_i(x,a_i)^{\sigma(i)}\right\},\Delta\right)\geq\alpha$$
\end{itemize}
Again, we define $\opr_n(\pi,\Delta)=\infty$ to mean that $\opr_n(\pi,\Delta)\geq\alpha$ for every ordinal $\alpha$. When $\Delta = \{\phi\}$ consists of a single formula, we write $\opr_n(-,\phi)$ in place of $\opr_n(-,\{\phi\})$.


For an ordinal $\beta$, $\Gamma_{n,\beta}(\pi,\phi)$ is the following set of sentences (with new constant symbols $a_\sigma$, $b_{\sigma,\ell,0},...,b_{\sigma,\ell,n-1}$ for $\sigma = (\sigma_\ell)_{\ell<\beta}\in (2^n)^\beta$ and $\ell<\beta$):
$$\bigcup_{\sigma\in (2^n)^\beta}\pi(a_\sigma)\,\cup\, \left\{\phi(a_\sigma,b_{\sigma, \ell,i})^{\sigma_\ell(i)}:\sigma\in (2^n)^\beta,\,\ell<\beta\right\}.$$
{\em Implicitly, we require that for all $\sigma,\tau\in (2^n)^\omega$, $\ell<\omega$, if $\sigma_k=\tau_k$ for each $k<\ell$, then $b_{\sigma,\ell,i} = b_{\tau,\ell,i}$ for each $i<n$.}
\end{defn}

\begin{fact}
Let $\pi(x)$ be a partial type, and let $\Delta$ be a finite set of formulas of $\L(\UU)$. Also, let $\phi(x,y)\in\L(\UU)$ and $0<n<\omega$.
\begin{enumerate}
\item $\opr_n(\pi,\Delta)=\infty$ if and only if $\opr_n(\pi,\Delta)\geq\omega$.
\item For any ordinal $\beta$, $\opr_n(\pi,\phi)\geq\beta$ if and only if $\Gamma_\beta(\pi,\phi)\cup e\diag(\UU)$ is consistent. 
\end{enumerate}
Also, for any finite set $\Delta$ of formulas $\theta(x,y)$ of $\L(\UU)$, there is a single formula $\phi_\Delta(x,z)$ of $\L(\UU)$ such that $\opr_n(-,\Delta) = \opr_n(-,\phi_\Delta)$.
\end{fact}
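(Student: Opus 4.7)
The plan is to mirror the classical development of Shelah's 2-rank, adapting each step to accommodate the $(2^n)^{<\omega}$-tree structure in place of $2^{<\omega}$ and the implicit ``tree-consistency'' condition on the parameters $b_{\sigma,\ell,i}$ (that is, $b_{\sigma,\ell,i}=b_{\tau,\ell,i}$ whenever $\sigma\!\restriction\!\ell=\tau\!\restriction\!\ell$). The three claims are tightly linked, so I would prove part (2) first, then derive part (1) from it by compactness, and finally establish the coding claim.

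For part (2), I would proceed by transfinite induction on $\beta$. The cases $\beta=0$ and $\beta$ a limit are immediate from the definitions of $\opr_n$ and $\Gamma_{n,\beta}$. At the successor step $\beta=\alpha+1$: if $\opr_n(\pi,\phi)\geq\alpha+1$ is witnessed by instances $\phi(x,a_0),\dots,\phi(x,a_{n-1})$, then for each $\sigma\in 2^n$ the enriched type
$$\pi(x)\cup\Bigl\{\bigwedge_{i<n}\phi(x,a_i)^{\sigma(i)}\Bigr\}$$
has $\opr_n\geq\alpha$, and by inductive hypothesis each yields a realization of $\Gamma_{n,\alpha}$ consistent with $e\diag(\UU)$; I would then glue these $2^n$ realizations into one by taking $a_0,\dots,a_{n-1}$ as the shared level-$0$ parameters, producing a realization of $\Gamma_{n,\alpha+1}(\pi,\phi)$. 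The converse direction simply unpacks a realization of $\Gamma_{n,\beta}$ to extract branching instances of $\phi$ at every level.

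Part (1) then drops out from a standard compactness argument: $\opr_n(\pi,\phi)\geq k$ for every $k<\omega$ is, by part (2), equivalent to the consistency of $\bigcup_{k<\omega}\Gamma_{n,k}(\pi,\phi)\cup e\diag(\UU)$, which by compactness is equivalent to the consistency of $\Gamma_{n,\beta}(\pi,\phi)\cup e\diag(\UU)$ for \emph{every} ordinal $\beta$, forcing $\opr_n(\pi,\phi)=\infty$. For the coding claim, I would write $\Delta=\{\phi_0(x,y_0),\dots,\phi_{k-1}(x,y_{k-1})\}$ and set
$$\phi_\Delta(x,y_0\cdots y_{k-1}w) := \bigvee_{i<k}\bigl(w=c_i\wedge\phi_i(x,y_i)\bigr),$$
for fixed distinct $c_0,\dots,c_{k-1}\in\UU$. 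Every $\phi_i$-instance is a $\phi_\Delta$-instance (take $w=c_i$), yielding $\opr_n(\pi,\phi_\Delta)\geq\opr_n(\pi,\Delta)$; conversely, any $\phi_\Delta$-instance with $w\neq c_j$ for all $j$ is identically false and can be discarded from a witnessing configuration, reducing it to one using only $\phi_i$-instances, and this gives the reverse inequality.

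The main obstacle is the bookkeeping around the tree-consistency condition on the $b_{\sigma,\ell,i}$'s in the inductive gluing step. When assembling $2^n$ realizations of $\Gamma_{n,\alpha}$ into a realization of $\Gamma_{n,\alpha+1}$, one must ensure that parameters sitting above a common $\sigma$-prefix in the $(2^n)$-branching tree really do get identified across all $2^n$ ``children.'' This is where the implicit coherence bakes itself into the definition of $\Gamma_{n,\beta}$, and it is somewhat more delicate than in the 2-rank case because one is branching by $2^n$ (with $n$ simultaneous parameters $b_{\sigma,\ell,0},\dots,b_{\sigma,\ell,n-1}$) rather than by $2$ at each level; nevertheless, the inductive step handles only one new level at a time, so the coherence condition propagates cleanly up the tree.
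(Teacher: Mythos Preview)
Your proposal is correct and follows precisely the route the paper intends: the paper states this Fact without proof, remarking only that the demonstrations are ``almost identical'' to those for Shelah's 2-rank, and your argument is exactly that adaptation---transfinite induction for (2), compactness to pass from $\geq\omega$ to $=\infty$ for (1), and the standard selector-variable coding for the reduction to a single formula. One small ordering remark: your derivation of (1) goes through (2) and hence literally yields (1) only for a single $\phi$; to obtain (1) for a finite $\Delta$ as stated, you should either invoke the coding claim first or note that the same compactness argument applies directly to an obvious $\Delta$-version of $\Gamma_{n,\beta}$.
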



The closest analog to op-dimension in the stability theory literature is the notion of $\kappa_{\text{ird}}(T)$ defined in \cite{Shelah}.  However, this concept is approached through the notion of an IRD-pattern and not through a $2$-rank-like construction.  In the unstable setting, using op-ranks, we can define op-dimension in a very simpleminded way.

\begin{defn}
For a partial type $\pi(x)$, we define the {\em op-dimension of $\pi(x)$} to be, 
$$\opd(\pi) = \sup\left\{0<n<\omega:(\exists \Delta)\,\opr_n(\pi,\Delta) = \infty\right\}\leq\omega.$$
(Note that, by definition of $\sup$ on ordinals, $\sup\emptyset = 0$.) As is standard, for $a\in\UU$ and $B\subset\UU$, we define $\opd(a/B)$ to be $\opd(\tp(a/B))$. For a formula $\phi(x)\in\L(\UU)$, we define $\opd(\phi) = \opd(\{\phi\})$, and if $X$ is the subset of $\UU^x$ defined by $\phi(x)$, then $\opd(X) =\opd(\phi)$.
\end{defn}

\begin{rem}
Let us say that a partial type $\pi(x)$ is \emph{unstable} if there are a formula $\phi(x,y)$ of $\L(\UU)$ and an indiscernible sequence $(a_q)_{q\in\QQ}$ of realizations of $\pi$ such that for every cut $X$ of $(\QQ,<)$, there is a $b\in \UU^y$ such that $\left\{q\in\QQ:\UU\models\phi(a_q,b)\right\} = X$. Obviously, we should say that $\pi(x)$ is stable just in case it is not unstable. Thus, $\pi(x)$ is stable if and only if $\opd(\pi) = 0$.
\end{rem}

The following statement collects together a number of facts whose analogs for the 2-rank are essential in developing the machinery of forking-dependence in a stable theory -- when one carries out that development using ranks, as turned out to be very useful for generalizations to simple and rosy theories. For our purposes, they immediate suggest that $\opd$ can indeed be viewed as a dimension function insofar as it has, at least, the appropriate monotonicity properties of a reasonable dimension theory. 

\begin{fact}
$\opr_n$ ($0<n<\omega$) has the following monotonicity properties:
\begin{enumerate}
\item Suppose $\pi_0(x)\subseteq\pi_1(x)$, $\Delta_0\supseteq\Delta_1$, and $0<n_0\leq n_1<\omega$. Then, 
$$\opr_{n_0}(\pi_0,\Delta_0)\geq\opr_{n_1}(\pi_1,\Delta_1).$$
\item Let $X_0,X_1$ be definable sets of the same sort, $0<n<\omega$, and $\Delta$ a finite set of formulas of $\L(\UU)$. Then 
$$\opr_n(X_0\vee X_1,\Delta) = \max\left\{\opr_n(X_0,\Delta),\opr_n(X_1,\Delta)\right\}.$$
\item Let $X,Y$ be a type-definable sets, and suppose $f:X\to Y$ is definable bijection. Then, for any $0<n<\omega$, for any finite set $\Delta$ of $\L(\UU)$-formulas, there is another finite set of formulas $\Delta'$ such that $\opr_n(X,\Delta) = \opr_n(Y,\Delta')$.
\end{enumerate}
\end{fact}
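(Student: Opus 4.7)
For (1), I would induct on the ordinal $\alpha$ bounding $\opr_{n_1}(\pi_1,\Delta_1)$; the base and limit cases are immediate from the definition. In the successor step $\alpha+1$, let $\phi_0(x,a_0),\ldots,\phi_{n_1-1}(x,a_{n_1-1})$ be the given witnesses from $\Delta_1$. Since $\Delta_1\subseteq \Delta_0$ and $n_0\leq n_1$, the first $n_0$ of these already lie in $\Delta_0$; for each $\tau\in 2^{n_0}$, pad $\tau$ with zeros to a $\sigma\in 2^{n_1}$ extending it. Then
$$\pi_0\cup\left\{\bigwedge_{i<n_0}\phi_i(x,a_i)^{\tau(i)}\right\}\subseteq \pi_1\cup\left\{\bigwedge_{i<n_1}\phi_i(x,a_i)^{\sigma(i)}\right\},$$
so the inductive hypothesis applied to these two extended partial types delivers that the $\opr_{n_0}$ of the left side is $\geq\alpha$, and hence $\opr_{n_0}(\pi_0,\Delta_0)\geq\alpha+1$, as required.

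The inequality $\geq$ in (2) is a direct specialization of (1): taking $n_0=n_1=n$ and $\Delta_0=\Delta_1=\Delta$, the inclusions $X_j\subseteq X_0\vee X_1$ yield $\opr_n(X_j,\Delta)\leq \opr_n(X_0\vee X_1,\Delta)$ for each $j\in\{0,1\}$. The reverse inequality is the more delicate half: a naive transfinite induction at the successor step produces, for each $\sigma\in 2^n$, a choice $j(\sigma)\in\{0,1\}$ with $\opr_n((X_{j(\sigma)})_\sigma,\Delta)\geq\alpha$, but one needs a single $j$ working uniformly over \emph{all} $2^n$ branches simultaneously to conclude $\opr_n(X_j,\Delta)\geq\alpha+1$. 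I expect this uniformization step to be the main obstacle. The plan is to circumvent it by passing to the $\infty$-level through the compactness characterization: starting from an array witnessing $\opr_n(X_0\vee X_1,\Delta)=\infty$, color each branch of the $(2^n)$-ary tree by whether the associated $a_\sigma$ lies in $X_0$ or in $X_1$, apply iterated tree-Ramsey (in the spirit of Hales--Jewett) to extract arbitrarily deep monochromatic sub-configurations, and then pigeonhole on the two colors to fix a single $j\in\{0,1\}$ with $\opr_n(X_j,\Delta)=\infty$. This is the sense in which the equality is actually used in the subsequent dimension-theoretic developments of $\opd$.

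For (3), let $\psi(x,y)$ be a formula defining the graph of the bijection $f\colon X\to Y$. For each $\theta(x,z)\in\Delta$, define the transfer $\theta^f(y,z):=\exists x\,(\psi(x,y)\wedge\theta(x,z))$, and set $\Delta':=\{\theta^f:\theta\in\Delta\}$. A routine transfinite induction then shows that every conjunction $\bigwedge_{i<n}\theta_i(x,a_i)^{\sigma(i)}$ of $\Delta$-instances, intersected with $X$, maps bijectively under $f$ to the intersection of $Y$ with the corresponding $\Delta'$-conjunction; consequently, witnesses to $\opr_n(X,\Delta)\geq\alpha$ transfer forward along $f$ and backward along $f^{-1}$ (whose graph is also definable), yielding the equality $\opr_n(X,\Delta)=\opr_n(Y,\Delta')$.
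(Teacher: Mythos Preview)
The paper states this Fact without proof, remarking only that the arguments are ``almost identical'' to those for Shelah's 2-rank, so there is no paper-proof to compare against directly. Your arguments for (1) and (3) are standard and correct; the only cosmetic issue in (1) is that your displayed containment is not a literal set inclusion (the shorter conjunction is not an element of the right-hand side), but replacing the single conjunction by the set of its conjuncts, or first observing that $\opr_n$ respects logical equivalence of partial types, repairs this instantly.

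Your instinct on (2) is exactly right, and in fact sharper than the paper's statement: the exact equality $\opr_n(X_0\vee X_1,\Delta)=\max_j\opr_n(X_j,\Delta)$ is \emph{false} at finite levels. Already for $n=1$ (i.e., the classical 2-rank) in DLO with $\Delta=\{x<y\}$, take $X_0=\{0,1\}$ and $X_1=\{2,3\}$: then $\opr_1(X_j,\Delta)=1$ for each $j$, but $\opr_1(X_0\vee X_1,\Delta)=2$ (split first at $3/2$). So the uniformization obstacle you flag in the successor step is not a mere technicality but a genuine obstruction, and the equality can only be meant at the level ``finite versus $\infty$,'' which is indeed all that Corollary~\ref{cor:dim-monotonicity}(3) requires. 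Your plan for that case is sound: given an array witnessing $\opr_n(X_0\vee X_1,\phi)\geq N$, the leaves $a_\sigma$ ($\sigma\in(2^n)^N$) carry a $2$-coloring, and an $M$-dimensional combinatorial subspace (multidimensional Hales--Jewett, with the wildcard blocks reindexed by their minima so as to respect the tree order) yields a sub-array witnessing $\opr_n(X_j,\phi)\geq M$ for the color $j$ of that subspace; letting $N\to\infty$ and pigeonholing on $j$ finishes. You should state explicitly that you are proving only the weaker (and true) version $\opr_n(X_0\vee X_1,\Delta)=\infty\iff\max_j\opr_n(X_j,\Delta)=\infty$.
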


\begin{cor}\label{cor:dim-monotonicity}
$\opd$ has the following monotonicity properties of a dimension (for type-definable sets $X,Y$):
\begin{enumerate}
\item If $X,Y$ are in definable bijection with each other, then $\opd(X)=\opd(Y)$.
\item If $X\subseteq Y$, then $\opd(X)\leq \opd(Y)$.
\item Provided the definable sets $X,Y$ are of the same sort,
$$\opd(X\vee Y)=\max\left\{\opd(X),\opd(Y)\right\}.$$
\end{enumerate}
\end{cor}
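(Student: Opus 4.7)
The plan is to deduce each of the three clauses directly from the corresponding clause of the preceding Fact on $\opr_n$, unwinding the definition $\opd(\pi)=\sup\{0<n<\omega:(\exists\Delta)\,\opr_n(\pi,\Delta)=\infty\}$ and taking suprema over $n$ and over finite $\Delta$. Nothing new about op-rank needs to be proved; the content of the corollary is essentially a bookkeeping reformulation.

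For clause (2), I would write $X$ and $Y$ as cut out by partial types $\pi_X\supseteq\pi_Y$ (the larger set corresponds to the smaller type). Applying Fact (1) with $n_0=n_1=n$ and $\Delta_0=\Delta_1=\Delta$ yields $\opr_n(\pi_Y,\Delta)\geq\opr_n(\pi_X,\Delta)$, so any pair $(n,\Delta)$ witnessing $\opr_n(\pi_X,\Delta)=\infty$ also witnesses the same for $\pi_Y$; taking $\sup$ over such $n$ gives $\opd(X)\leq\opd(Y)$. For clause (3), Fact (2) gives $\opr_n(X\vee Y,\Delta)=\max\{\opr_n(X,\Delta),\opr_n(Y,\Delta)\}$ for each $n$ and $\Delta$, and since a maximum of two ordinals equals $\infty$ exactly when one of the two does, the set of $n$ such that some $\Delta$ witnesses $\opr_n(X\vee Y,\Delta)=\infty$ is precisely the union of the corresponding sets for $X$ and for $Y$; taking suprema yields $\opd(X\vee Y)=\max\{\opd(X),\opd(Y)\}$.

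For clause (1), I would apply Fact (3): a definable bijection $f\colon X\to Y$ produces, for each $n$ and each finite $\Delta$, a finite $\Delta'$ with $\opr_n(X,\Delta)=\opr_n(Y,\Delta')$, so $\opd(X)\leq\opd(Y)$; running the same argument with $f^{-1}$ gives the reverse inequality and hence equality. There is no substantive obstacle; the only point requiring mild care is in clause (2), where for merely type-definable (rather than definable) $X,Y$ one must invoke the partial-type form of Fact (1) instead of a single-formula version, which is exactly how Fact (1) is stated. One may also note that the compactness-style collapse $\opr_n(\pi,\Delta)=\infty\iff\opr_n(\pi,\Delta)\geq\omega$ from the earlier fact ensures the supremum in the definition of $\opd$ is stable under all of these manipulations.
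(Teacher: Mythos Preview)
Your proposal is correct and matches the paper's approach: the paper states this as a corollary of the preceding Fact on $\opr_n$ with no explicit proof, and your derivation of each clause from the corresponding clause of that Fact via the definition $\opd(\pi)=\sup\{0<n<\omega:(\exists\Delta)\,\opr_n(\pi,\Delta)=\infty\}$ is exactly the intended bookkeeping.
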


\subsection{Generalized Indiscernibles and $n$-MOP}

In this subsection, we demonstrate some connections between op-dimension and an evolving framework connecting generalized-indiscernible ``collapse'' theorems and dividing lines in the model-theoretic (in)stability hierarchy.

\begin{thm}\label{thm:mlo-theory-of-indisc}
For every $0<n<\omega$, $\mlo_n$ is a theory of generalized indiscernibles in the sense of \cites{GuinHillScow,Hill,KPT}:

Let $\A\models \mlo_n^*$, and let $\M$ be some $|A|^+$-saturated $\L$-structure (in any language $\L$ whatever). Let $\EM$ be a map $A^{<\omega}\to M^{<\omega}$ (really, a family of maps $A^k\to M^k$ for $0<k<\omega$) such that:
\begin{itemize}
\item If $\EM(a_0,...,a_{k-1}) = (b_0,...,b_{k-1})$, then for each $\sigma\in\emph{Sym}(k)$, 
$$\EM(a_{\sigma(0)},...,a_{\sigma(k-1)}) = (b_{\sigma(0)},...,b_{\sigma(k-1)}).$$
\item For any $\aa,\aa'\in A^{<\omega}$, $\tp^\M(\EM(\aa\widehat{\,\,}\aa')) = \tp^\M(\EM(\aa)\widehat{\,\,}\EM(\aa')).$ 
\end{itemize}
Then, there is a map $g:A\to M$ such that:
\begin{itemize}
\item For all $0<k<\omega$ and $\aa,\aa'\in A^k$, 
$$\qtp^\A(\aa) = \qtp^\A(\aa')\,\,\implies\,\,\tp^\M(g\aa) = \tp^\M(g\aa').$$
\item For all $0<k<\omega$, every $\aa\in A^k$, and every finite set $\Delta$ of $\L$-formulas, there is an $\aa'\in A^k$ such that $\qtp^\A(\aa) = \qtp^\A(\aa')$ and $\tp_\Delta^\M(g\aa) = \tp_\Delta^\M(\EM(\aa'))$.
\end{itemize}
(For brevity, we say that $g$ is an indiscernible picture of $\A$ in $\M$ patterned on $\EM$.)
\end{thm}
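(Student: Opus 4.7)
My plan is to reduce the statement to a structural Ramsey problem and invoke the Kechris--Pestov--Todor\v{c}evi\'c correspondence (\cite{KPT}): the Fra\"iss\'e limit $\A_n$ is a ``theory of generalized indiscernibles'' in the sense above precisely when the age $K_n$ has the Ramsey property. It therefore suffices to verify that $K_n$ is a Ramsey class --- for all $A,B \in K_n$ and $r<\omega$, there is $C \in K_n$ such that every $r$-coloring of the copies of $A$ in $C$ admits a monochromatic copy of $B$.

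For the Ramsey property of $K_n$, I would proceed by induction on $n$, with $n=1$ being the classical finite Ramsey theorem applied to ordered tuples. For the inductive step, one can code a finite $n$-multi-order $A$ as a structure with the distinguished linear order $<_0$ together with $n-1$ permutations recording how $<_1,\ldots,<_{n-1}$ compare to $<_0$, and then run a product-Ramsey argument, or build $C$ directly by a Ne\v{s}et\v{r}il--R\"odl-style amalgamation of Ramsey objects. Alternatively, one may appeal to the general Ramsey theorem for finite structures equipped with several independent linear orderings, which is standard in the structural Ramsey literature.

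Granted the Ramsey property, I produce $g$ by a standard modeling-plus-compactness argument. For each quantifier-type $p(y_0,\ldots,y_{k-1})$ of $\mlo_n$ and each $\L$-formula $\phi(z_0,\ldots,z_{k-1})$, I want to choose a truth value $t_{p,\phi}<2$ so that the partial $\L$-type in variables $(x_a)_{a\in A}$,
\[
\Sigma \;=\; \bigcup \left\{ \phi(x_{a_0},\ldots,x_{a_{k-1}})^{t_{p,\phi}} \,:\, \A_n \models p(a_0,\ldots,a_{k-1}) \right\},
\]
is finitely satisfiable in $\M$ along tuples of the form $\EM(\aa')$. Given any finite $A_0 \subseteq A$ and any finite $\Delta \subseteq \L$, color each copy $\aa \in A^{|A_0|}$ of $A_0$ in $A$ by the $\Delta$-type $\tp_\Delta^\M(\EM(\aa))$. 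Ramsey yields a monochromatic copy, on which the $\Delta$-type of $\EM$ depends only on the quantifier-type in $\A_n$; a diagonal argument across all pairs $(\Delta,A_0)$ then produces a coherent global assignment $(p,\phi)\mapsto t_{p,\phi}$ rendering $\Sigma$ finitely satisfiable. Finally, $|A|^+$-saturation of $\M$ realizes $\Sigma$ by some tuple $(g(a))_{a\in A}$, and unpacking the two defining clauses of $\Sigma$ yields both indiscernibility of $g$ and the patterning condition against $\EM$.

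The principal obstacle is the Ramsey property for $K_n$; once established, the remaining compactness work follows the standard modeling-theorem template in the spirit of \cite{KPT}. A secondary technicality is the coherent choice of the truth-value assignment across varying finite parameters, but this is routine once Ramsey is in hand.
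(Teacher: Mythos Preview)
Your approach is correct and is, in fact, the standard route to results of this kind: establish the Ramsey property for the age $K_n$ and then run the usual modeling/compactness argument to extract the indiscernible picture $g$. Note, however, that the paper does not actually prove this theorem in the text; immediately after the statement it says ``The proof of Theorem~\ref{thm:mlo-theory-of-indisc} can be found in \cite{Hill}.'' So there is no in-paper proof to compare against, only the deferred reference. Your outline matches what one expects such a proof to contain: the Ramsey property for finite sets equipped with $n$ independent linear orders (which is indeed known, via product-Ramsey or Ne\v{s}et\v{r}il--R\"odl style arguments, and appears in the structural Ramsey literature around Soki\'c and others), followed by the standard type-realization argument using saturation. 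One small remark: the precise equivalence you invoke between ``$K_n$ is Ramsey'' and ``$\A_n$ has the modeling property'' is closer to Scow's work than to \cite{KPT} proper (which concerns extreme amenability), but the implication you actually need---Ramsey $\Rightarrow$ modeling---is exactly the direct compactness argument you sketch, so this does not affect the validity of your plan.
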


The proof of Theorem \ref{thm:mlo-theory-of-indisc} can be found in \cite{Hill}. For background on the generalized-indiscernible collapse phenomenon, we cite the following theorem of \cite{Scow}.

\begin{thm}
Let $\OG$ be the theory (in the signature $\left\{<^{(2)},R^{(2)}\right\}$) of ordered graphs; that is, $\OG$ asserts the following:
\begin{itemize}
\item ``$<$ is a linear order of the universe (i.e. of the vertices).''
\item $\forall xy\left[R(x,y)\cond((x\neq y)\wedge R(y,x))\right]$
\end{itemize}
Then, $\OG$ has a model-companion $\OG^*$, which is also the theory of the \fraisse limit of the class of all finite ordered graphs. Moreover:
\begin{enumerate}
\item $\OG$ is a theory of indiscernibles in the same sense (of Theorem \ref{thm:mlo-theory-of-indisc}) that each $\mlo_n$ is.
\item The following are equivalent for any complete theory $T$ in any language whatever:
\begin{enumerate}
\item $T$ is dependent/NIP.
\item For any indiscernible picture $g$ of a model $\A = (A,<^\A,R^\A)$ of $\OG^*$ in an model $\M$ of $T$, $(g(a))_{a\in A}$ is an indiscernible sequence in order type $(A,<^\A)$, in the usual sense.
\end{enumerate}
\end{enumerate}
\end{thm}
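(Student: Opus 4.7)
The plan is to handle the three claims in the theorem in turn: the Fra\"iss\'e construction producing $\OG^*$, part (1) (generalized indiscernibility for $\OG^*$), and then part (2) (the NIP characterization).

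For the Fra\"iss\'e construction, I would verify that the class $K$ of finite ordered graphs is a Fra\"iss\'e class. Hereditary property is immediate; joint embedding is achieved by disjoint union under any extending linear order; amalgamation by free amalgamation over a common substructure (no new edges across, and any compatible linear extension of the two orders). The resulting Fra\"iss\'e limit is ultrahomogeneous and countably categorical, so its complete theory $\OG^*$ eliminates quantifiers and is axiomatized by the (schema of) one-point extension axioms. Because those extension axioms characterize existential closure inside $\OG$, the models of $\OG^*$ are precisely the existentially closed models of $\OG$; hence $\OG^*$ is the model-companion of $\OG$.

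For (1), the key external input is the Ne\v{s}et\v{r}il--R\"odl theorem that $K$ has the Ramsey property, combined with compactness. Given an $\EM$-pattern $\EM \colon A^{<\omega} \to M^{<\omega}$ with $\A \models \OG^*$ and $\M$ sufficiently saturated, I would define the desired indiscernible picture $g \colon A \to M$ by the type over $A$ asserting, for each $\qtp^\A$-class, that $g$ realizes on that class a single complete $\L$-type compatible with $\EM$. Consistency reduces, finite fragment by finite fragment, to the following claim: for every finite $\Delta \subseteq \L$, every finite template $\A_0 \subseteq \A$, and every finite list of $\qtp^\A$-types, one can find inside $\A$ a copy of $\A_0$ whose $\EM$-image is $\Delta$-indiscernible. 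This is exactly a Ramsey statement: color the $K$-embeddings of $\A_0$ into a large enough finite ordered graph by the $\Delta$-type of their $\EM$-image, apply Ne\v{s}et\v{r}il--R\"odl to extract a monochromatic copy, and then transport it into $\A$ via ultrahomogeneity of $\A$.

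For the main direction (a) $\Rightarrow$ (b) of part (2), I would suppose $T$ is NIP, let $g$ be an indiscernible picture of some $\A \models \OG^*$ in $\M \models T$, and derive a contradiction from the failure of collapse. Assume some formula $\phi(x_0,\ldots,x_{k-1})$ distinguishes two tuples $\aa, \aa' \in A^k$ with the same $<^\A$-order type. Interpolating between $\aa$ and $\aa'$ by a finite chain of single edge-flips (permissible since the linear order is fixed throughout), $\phi$ must change value across one such flip, producing two tuples differing in exactly one $R^\A$-edge. Fixing all other coordinates as parameters and renaming, we obtain a formula $\psi(x,y)$ witnessing that $\psi$ detects the presence of an edge between $x$ and $y$ with $<^\A$ between them fixed. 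Now, by the free amalgamation in $\OG^*$, for every $I \subseteq \omega$ we can realize inside $\A$ a sequence of pairs $(c_{n,0}, c_{n,1})_{n<\omega}$, all of the same linear-order type, with $R^\A(c_{n,0}, c_{n,1})$ holding iff $n \in I$; applying $g$ and the $\qtp^\A$-preservation converts this into an IP-witness for $\psi$ in $\M$, contradicting NIP.

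The converse (b) $\Rightarrow$ (a) is dual and easier: given $\phi(x,y)$ witnessing IP in $T$, use the Ramsey-based extraction of part (1) to build an $\OG^*$-indiscernible picture $g$ whose $R^\A$-edges are encoded by $\phi$; because both edges and non-edges occur, $g$ fails to be an order-indiscernible sequence in the usual sense. The main obstacle is the single-edge reduction in (a) $\Rightarrow$ (b): one must extract, from an arbitrary distinguishing formula, a minimal $\qtp^\A$-difference detected by $\phi$, and then exploit $\OG^*$'s amalgamation to reproduce every bit-pattern on that one edge while holding all other structure fixed. This coding maneuver is the real engine of the theorem, though with quantifier elimination and $\aleph_0$-categoricity in hand it becomes essentially a careful combinatorial setup.
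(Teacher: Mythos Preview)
The paper does not give its own proof of this theorem: it is stated and attributed to \cite{Scow}, with the preceding sentence ``For background on the generalized-indiscernible collapse phenomenon, we cite the following theorem of \cite{Scow}.'' So there is no in-paper argument to compare against; your outline is essentially the standard proof (and matches the strategy in Scow's paper): Ne\v{s}et\v{r}il--R\"odl for part (1), and the single-edge reduction plus random-ordered-graph extension axioms for part (2).

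One small wrinkle in your (a)$\Rightarrow$(b) sketch: the sentence ``for every $I \subseteq \omega$ we can realize inside $\A$ a sequence of pairs $(c_{n,0}, c_{n,1})_{n<\omega}$\ldots'' is not quite the shape of an IP-witness. What you actually want, once $\psi(x,y)$ detects a single edge with the surrounding order-type fixed, is a \emph{single} sequence $(a_i)_{i<\omega}$ in $\A$ together with, for each finite $I$, a \emph{single} vertex $b_I$ (placed appropriately in the order) with $R^\A(a_i,b_I)\iff i\in I$; this is exactly what the one-point extension axioms of $\OG^*$ provide. Then $\psi(g(a_i),g(b_I))$ shatters $(g(a_i))_i$ in $\M$. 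Your phrasing with pairs indexed by $n$ and varying with $I$ obscures which side is the parameter and which is the sequence, and as written does not literally produce IP. The fix is routine, but worth stating cleanly.
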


Intuitively, this theorem asserts that, for all intents, the theory of indiscernibles $\OG$ encodes the independence property. Viewing the theory of linear order LO ($=\mlo_1$) as a theory of indiscernibles, as we may, the following venerable characterization of stability also fits (loosely) into this framework. (There is actually a mismatch in that the ``remainder'' of $\OG$ in a dependent/NIP theory is $\mlo_1$, which is still a theory of indiscernibles, but the remainder of $\mlo_1$ in a stable theory is the theory of equality, which, in fact, is not a theory of indiscernibles.)

\begin{thm}
Let $T$ be a complete theory in any language. The following are equivalent:
\begin{enumerate}
\item $T$ is unstable.
\item In some model of $T$, there is an indiscernible sequence $(a_q)_{q\in\QQ}$ that is not an indiscernible set.
\end{enumerate}
\end{thm}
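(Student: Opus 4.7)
The plan is to prove $(1) \Leftrightarrow (2)$ via the standard equivalence between the order property and violations of set-indiscernibility.

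For $(1) \Rightarrow (2)$, I would start from a formula $\phi(x, y)$ witnessing instability via the order property, obtaining tuples $(a_i, b_i)_{i < \omega}$ with $\models \phi(a_i, b_j)$ iff $i < j$. Setting $c_i = a_i \widehat{\,\,} b_i$, Ramsey's theorem together with compactness produces a $\QQ$-indexed indiscernible sequence $(c_q)_{q \in \QQ}$, $c_q = a_q \widehat{\,\,} b_q$, realizing the same EM-type, so $\models \phi(a_q, b_r)$ iff $q < r$. For any $q < r$, transposing $c_q$ and $c_r$ flips the truth value of $\phi(a_q, b_r)$, exhibiting $(c_q)_{q \in \QQ}$ as indiscernible but not an indiscernible set.

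For $(2) \Rightarrow (1)$, let $(a_q)_{q \in \QQ}$ be indiscernible but not an indiscernible set. The plan is to extract a formula $\phi(x_1, \ldots, x_n) \in \L(\UU)$, sorted rationals $q_1 < \cdots < q_n$, and an index $k$ such that swapping positions $k, k+1$ of the sorted tuple changes the truth value of $\phi$. Freezing the remaining coordinates as parameters, set $\psi(x, y) := \phi(a_{q_1}, \ldots, a_{q_{k-1}}, x, y, a_{q_{k+2}}, \ldots, a_{q_n})$, so $\psi(a_{q_k}, a_{q_{k+1}}) \not\leftrightarrow \psi(a_{q_{k+1}}, a_{q_k})$. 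Since $q_k, q_{k+1}$ both lie in the open rational interval $(q_{k-1}, q_{k+2})$, which contains no parameter indices, the sub-sequence $(a_q)_{q \in (q_{k-1}, q_{k+2})}$ is indiscernible over the parameters of $\psi$. Propagating the asymmetry, for any $u < v$ in this interval $\psi(a_u, a_v)$ and $\psi(a_v, a_u)$ have opposite truth values; taking an increasing sequence $u_0 < u_1 < \cdots$ in the interval witnesses the order property for $\psi$.

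The main obstacle lies in producing the adjacent-index swap on a sorted tuple from the bare failure of set-indiscernibility. A direct Cayley-graph decomposition of a type-changing permutation into adjacent transpositions only yields an adjacent swap applied to some intermediate, possibly unsorted, tuple, where the two swapped elements may lie in different gaps relative to the remaining parameters. The passage to the sorted case requires an induction on $n$: if every adjacent-index swap on every sorted $n$-tuple preserves truth value, one shows that the asymmetry must descend to a formula of strictly smaller arity, and iterating this descent reaches the $2$-variable case, whose asymmetry yields the order property directly.
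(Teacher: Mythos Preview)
The paper states this theorem as a classical result and gives no proof of its own, so there is nothing to compare against directly; your outline is the standard one and both directions are essentially correct.

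For $(2)\Rightarrow(1)$, the obstacle you flag dissolves with a small change of viewpoint, and the induction on arity is unnecessary. When you write the type-changing permutation $\sigma$ as a product of adjacent transpositions $\tau_m\cdots\tau_1$ and walk along the partial products $\sigma_i=\tau_i\cdots\tau_1$, note what \emph{left}-multiplication by $\tau_i=(k,k{+}1)$ does to the tuple $\bigl(a_{q_{\sigma_{i-1}(1)}},\ldots,a_{q_{\sigma_{i-1}(n)}}\bigr)$: since $\sigma_i(j)=(k,k{+}1)\bigl(\sigma_{i-1}(j)\bigr)$, it swaps the two entries $a_{q_k}$ and $a_{q_{k+1}}$ wherever they happen to sit. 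Hence the two elements whose exchange flips the truth value are automatically \emph{order}-adjacent among $q_1<\cdots<q_n$, even though they need not occupy adjacent \emph{positions} in the intermediate (unsorted) tuple. Freezing the other $n-2$ coordinates as parameters, none of those parameter indices lies in the open interval $(q_{k-1},q_{k+2})$, and your argument from that point onward goes through verbatim. Your concern arose from reading the walk as right-multiplication (which swaps adjacent positions and hence possibly non-adjacent values); switching to left-multiplication hands you the order-adjacent swap for free.
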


We now define the (``smoothed'') combinatorial property that seems to correspond to our op-dimensions in the same way that the order property corresponds to 2-rank.

\begin{defn}
Let $0<n<\omega$, and let $\pi(x)$ be a consistent partial type. We say that $\pi(x)$ has the {\em $n$-multi-order property ($n$-MOP)} if there are an indiscernible picture $(\A_n,g)$ in $\pi(\UU)$ and a formula $\phi(x,y)$ of $\L(\UU)$ such that for any multi-cut $(X_0,...,X_{n-1})$ of $\A_n$, there are $b_0,...,b_{n-1}\in \UU^y$ such that $X_i = \left\{a\in A: \UU\models\phi(g(a),b_i)\right\}$ for each $i<n$.
\end{defn}

We note that the ``collapse'' results in the previous two theorems require a rather fine analysis of exactly how, for example, an ordered-graph indiscernible picture can collapse down to an indiscernible picture of reduct. Such an analysis for our $n$-MOPs would take us outside of the scope of the goals of this paper, though such an analysis will be given in \cite{GuinHillScow}. For now, we consider a more basic analog of the following fact:

\begin{fact}
A partial type $\pi(x)$ is stable iff $R(\pi,\phi,2)<\omega$ for every formula $\phi(x,y)$ of $\L(\UU)$ iff $\opr_1(\pi,\phi)<\omega$ for every formula $\phi(x,y)$ iff $\opd(\pi)=0$.
\end{fact}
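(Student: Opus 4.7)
The equivalences (ii) $\Leftrightarrow$ (iii) $\Leftrightarrow$ (iv) are essentially mechanical. Comparing the two recursive definitions with $n=1$: since $2^1=2$ and the list of instances $\phi_0(x,a_0),\ldots,\phi_{n-1}(x,a_{n-1})$ degenerates to a single instance, the inductive clause defining $\opr_1(\pi,\phi)\geq\alpha+1$ is \emph{verbatim} the clause defining $R(\pi,\phi,2)\geq\alpha+1$; hence $R(\pi,\phi,2)=\opr_1(\pi,\phi)$ for every $\phi$. For (iii) $\Leftrightarrow$ (iv), apply monotonicity (1) of the preceding fact with $n_0=1\leq n$ to obtain $\opr_1(\pi,\Delta)\geq\opr_n(\pi,\Delta)$ for every $n\geq 1$ and every finite $\Delta$; hence $\opr_n(\pi,\Delta)=\infty$ for some $n,\Delta$ already forces $\opr_1(\pi,\Delta)=\infty$. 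Together with the single-formula reduction ($\opr_1(-,\Delta)=\opr_1(-,\phi_\Delta)$) and part (1) of the preceding fact (which collapses $\infty$ to $\geq\omega$), this yields $\opd(\pi)=0$ iff $\opr_1(\pi,\phi)<\omega$ for every $\phi$.

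The substance lies in (i) $\Leftrightarrow$ (ii). For the forward direction, suppose $\pi(x)$ is unstable, witnessed by a $\QQ$-indiscernible sequence $(a_q)_{q\in\QQ}$ of realizations of $\pi$ and a formula $\phi(x,y)$. Fix $n<\omega$. Pick $2^n$ points $(q_\sigma)_{\sigma\in 2^n}$ in $\QQ$ in strict lex-increasing order, and for each $s\in 2^{<n}$ of length $k$ pick a rational $r_s$ strictly between the largest $q_\sigma$ with $\sigma|k=s,\,\sigma(k)=0$ and the smallest such $q_\sigma$ with $\sigma(k)=1$. By instability, pick $b_s\in\UU^y$ defining the cut $\{q\in\QQ:q<r_s\}$ via $\phi$. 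A direct verification, together with a harmless bitwise relabeling of $2^{\leq n}$ (legitimate because the $2$-rank is symmetric in the two truth values), shows that $(a_{q_\sigma})_\sigma$ and $(b_s)_s$ witness $R(\pi,\phi,2)\geq n$. Since $n$ was arbitrary, Fact (1) above gives $R(\pi,\phi,2)=\infty$.

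Conversely, if $R(\pi,\phi,2)\geq\omega$, then by Fact (2), $\Gamma_\beta(\pi,\phi)\cup e\diag(\UU)$ is consistent for every $\beta$, so compactness produces a fully branching tree $\{a_\sigma:\sigma\in 2^{<\omega}\}\cup\{b_s:s\in 2^{<\omega}\}$ with $a_\sigma\models\pi$ and $\models\phi(a_\sigma,b_{\sigma|k})^{\sigma(k)}$. Viewing the set of branches as densely linearly ordered by lex on $2^\omega$, a standard Ramsey-style extraction (applied to the leaves of successively deeper finite sub-trees) yields a $\QQ$-indiscernible sequence $(a'_q)_{q\in\QQ}$ of realizations of $\pi$ whose EM-type is contained in that of the leaves of the original tree; the parameters associated with the corresponding initial segments then witness that every cut of $\QQ$ is $\phi$-defined, so $\pi$ is unstable. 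This last extraction is the principal technical step; everything else is either definition-chasing or the finite embedding of a binary tree into $\QQ$.
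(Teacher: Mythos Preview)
The paper does not prove this statement; it is labeled a \emph{Fact} and treated as classical (the equivalence of stability with boundedness of Shelah's $2$-rank is standard, and the remaining equivalences follow from the definitional identity $\opr_1=R(-,-,2)$ and the monotonicity fact immediately preceding). So there is no paper proof to compare against, and your write-up is a reasonable reconstruction of the folklore argument.

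Your treatment of (ii)$\Leftrightarrow$(iii)$\Leftrightarrow$(iv) is clean and correct. For (i)$\Rightarrow$(ii), the embedding of the height-$n$ binary tree into $\QQ$ via lex-ordered leaves and cut parameters at the internal nodes is exactly the right picture; the bit-flip you mention is indeed harmless.

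Two small points on the converse (ii)$\Rightarrow$(i). First, a notational slip: the constants $a_\sigma$ in $\Gamma_\beta(\pi,\phi)$ are indexed by $\sigma\in 2^\beta$ (full branches), not $2^{<\omega}$, so after compactness you should have $a_\sigma$ for $\sigma\in 2^\omega$ and $b_s$ for $s\in 2^{<\omega}$. Second, the phrase ``the parameters associated with the corresponding initial segments then witness that every cut of $\QQ$ is $\phi$-defined'' is doing a lot of work: after Ramsey extraction the original $b_s$'s need not survive, so one really argues that the EM-type of the extracted sequence contains, for each finite configuration, the assertion that suitable cut-defining parameters \emph{exist}, and then invokes saturation of $\UU$ to realize the parameter for an arbitrary cut. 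This is routine, but as written the sentence reads as though the tree parameters themselves carry over directly. With that clarification the argument is complete.
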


\begin{prop}
Let $0<n<\omega$, and let $\pi(x)$ be a consistent partial type. Then, $\opd(\pi)\geq n$ if and only if $\pi(x)$ has $n$-MOP.
\end{prop}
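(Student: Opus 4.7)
I will prove both directions. For $(\Leftarrow)$, assume $\pi$ has $n$-MOP, witnessed by an indiscernible picture $g\colon A\to\pi(\UU)$ of $\A_n$ and a formula $\phi(x,y)\in\L(\UU)$. My plan is to show $\opr_n(\pi,\phi)\geq k$ for every finite $k$, hence $\opr_n(\pi,\phi)=\infty$ by the Fact above, hence $\opd(\pi)\geq n$. I build the witnessing tree by induction on $k$: at each node $\rho\in(2^n)^{<\omega}$ I maintain a non-empty cell $C_\rho\subseteq A$, with $C_\emptyset=A$. At node $\rho$, choose any $n$ nontrivial multi-cuts (one per coordinate) inside the sub-multi-order $C_\rho$, and let $b_{\rho,0},\ldots,b_{\rho,n-1}$ be the defining parameters supplied by $n$-MOP. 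For $\tau\in 2^n$ set $C_{\rho\tau}:=\{a\in C_\rho\colon\UU\models\phi(g(a),b_{\rho,i})^{\tau(i)}\text{ for all }i<n\}$. Because $\mlo_n^*$ is $\aleph_0$-categorical with quantifier elimination, any non-empty finite intersection of strict open half-intervals in $\A_n$ is again a model of $\mlo_n^*$; in particular, each $C_{\rho\tau}$ is non-empty and the recursion proceeds.

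For $(\Rightarrow)$, assume $\opd(\pi)\geq n$ and fix $\phi$ with $\opr_n(\pi,\phi)=\infty$. By the Fact, $\Gamma_{n,\omega}(\pi,\phi)\cup e\diag(\UU)$ is consistent and is realized inside the saturated $\UU$ as $(a_\sigma)_{\sigma\in(2^n)^\omega}\subseteq\pi(\UU)$ and $(b_{\rho,i})_{\rho\in(2^n)^{<\omega},\,i<n}$. To produce an $n$-MOP witness, identify $(2^n)^\omega\cong(2^\omega)^n$ under the coordinate-wise lex orders (so that every countable $n$-multi-order embeds into it), fix an $n$-multi-order embedding $\iota\colon\A_n\hookrightarrow(2^n)^\omega$, and set $g_0(\eta):=a_{\iota(\eta)}$ together with $\EM(\eta_1,\ldots,\eta_k):=(g_0(\eta_1),\ldots,g_0(\eta_k))$. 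The compatibility hypotheses of Theorem~\ref{thm:mlo-theory-of-indisc} hold trivially as $\EM$ factors through a single map, so the theorem yields an indiscernible picture $g\colon A\to\pi(\UU)$. Because the statement ``$\exists y\,\bigwedge_{\eta\in F}\phi(g(\eta),y)^{[\eta\in X]}$'' is an existential $\L$-formula in the variables $g(\eta)$, if $g_0$ satisfies finite cut-definability then so does $g$ by the patterning clause, and saturation of $\UU$---combined with the fact that a cut of $(A,<_i)$ is determined by its finite restrictions---then upgrades finite cut-definability to full cut-definability of $\A_n$, giving $n$-MOP.

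The principal obstacle is establishing finite cut-definability for $g_0$ itself. Tree parameters $b_{\rho,i}$ directly witness only cuts of $\iota(F)$ arising from a prefix common to all of $\iota(F)$, which generally covers only $n$ of the $n(|F|+1)$ cuts required. My plan to overcome this is to first refine the tree $(a_\sigma)_{\sigma\in(2^n)^\omega}$ by a Ramsey/indiscernibility extraction---analogous to the standard tree-to-sequence argument deriving the classical order property from $R(\pi,\phi,2)=\infty$, and in the spirit of the generalized-indiscernible collapse framework of \cite{Hill,Scow}---so that $\tp^\UU((a_{\iota(\eta)})_{\eta\in F})$ depends only on the $\mlo_n$-type of $F$. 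Once the leaves are $\mlo_n$-indiscernible in this sense, every cut pattern on $F$ is consistent with their common type (witnessed by a tree parameter after re-embedding, or realized by saturation of $\UU$), so the required defining parameter exists. This combinatorial step is the crux; once it is in hand, the rest assembles routinely.
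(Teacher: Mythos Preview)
Your approach is essentially the same as the paper's in both directions.

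For $(\Leftarrow)$, the paper is terser: it simply observes that for each $N<\omega$ there is an injective homomorphism of $((2^N)^n,<_0,\ldots,<_{n-1})$ into $\A_n$, and that this together with the $n$-MOP witness $(g,\phi)$ immediately yields the consistency of $\Gamma_{n,N}(\pi,\phi)$. Your recursive cell-splitting constructs exactly this homomorphism level by level, so the two arguments coincide. (One terminological slip: a multi-cut is already an $n$-tuple of cuts, so you mean ``choose one multi-cut whose components are each nontrivial on $C_\rho$.'' Your claim that a nonempty open box in $\A_n$ is again a model of $\mlo_n^*$ is correct and is what makes the recursion go through.)

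For $(\Rightarrow)$, both you and the paper extract the tree from $\Gamma_{n,\omega}(\pi,\phi)$ and then appeal to Theorem~\ref{thm:mlo-theory-of-indisc} to produce the indiscernible picture. The paper records only that every finite $n$-multi-order of size $N$ embeds into $(N^n,<_0,\ldots,<_{n-1})$, and then asserts that the theorem plus $\aleph_1$-saturation deliver an indiscernible picture with full cut-definability; it does not spell out the $\EM$-map or why cut-definability transfers. The difficulty you isolate as the ``principal obstacle''---that a tree parameter $b_{\rho,i}$ controls $a_\sigma$ only for $\sigma$ extending $\rho$, so the tree alone does not give a single parameter defining an arbitrary cut on an arbitrary finite set of leaves---is real and is glossed over in the paper. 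Your plan to run a Ramsey/indiscernibility extraction on the leaves (so that tuples of the same $\mlo_n$-type acquire the same $\UU$-type, whence the existence of one cut-defining parameter propagates to all placements) is a legitimate way to close the gap and is precisely the content that Theorem~\ref{thm:mlo-theory-of-indisc} is meant to supply.
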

\begin{proof}
(``only if'') Assuming $\opd(\pi(x))\geq n$, let $\phi(x,y)$ be some formula of $\L(\UU)$ such that $\opr_n(\pi(x),\phi)=\infty$. Thus, $e\diag(\UU)\cup\Gamma_{n,\omega}(\pi,\phi)$ is consistent; we recover two families 
$$\{a_\sigma:\sigma\in (2^n)^\omega\},\,\,\left\{b_{\sigma,\ell,i}:\sigma\in (2^n)^\omega,\,\ell<\omega,\,i<n\right\}$$
such that:
\begin{itemize}
\item Each $a_\sigma$ is a realization of $\pi(x)$.
\item For any $\sigma,\tau\in (2^n)^\omega$, $\ell<\omega$, and $i<n$, if $\sigma_j = \tau_j$ for each $j<\ell$, then $b_{\sigma,\ell,i} = b_{\tau,\ell,i}$
\item For any $\sigma\in (2^n)^\omega$, $\ell<\omega$, and $i<n$, $\models\phi(a_\sigma,b_{\sigma,\ell,i})^{\sigma_\ell(i)}$
\end{itemize}
Now, we observe that if $B=(A,<_0,...,<_{n-1})$ is a finite model of $\mlo_n$ with, say, $|B| = N<\omega$, then there is an {\em embedding} $A\to (N^n,<_0,...,<_{n-1})$, where in the latter structure, the orders are interpreted coordinate-wise.
By Theorem \ref{thm:mlo-theory-of-indisc} (and the fact that $\UU$ is $\aleph_1$-saturated), we obtain an injective mapping $g:A\to \UU$ such that $g[A]\subseteq\pi(\UU)$ and for every multi-cut $(X_0,...,X_{n-1})$ of $\A_n$, there are $b_0,...,b_{n-1}\in \UU^y$ such that $X_i = \left\{a\in A:\,\,\models\phi(g(a),b_i)\right\}$ for each $i<n$. Thus, $\pi(x)$ has $n$-MOP.

(``if'') Suppose $\pi(x)$ has $n$-MOP, and let $g:A\to\UU$ and $\phi(x,y)$ witness this fact. We will show that $\opr_n(\pi,\phi)=\infty$, and for this, it is enough to show that for each $N<\omega$, $\Gamma_{n,N}(\pi,\phi)\cup e\diag(\UU)$ is consistent. We observe that for any $N<\omega$, there is an {\em injective homomorphism} of the coordinate-wise ordered structure $((2^N)^n,<_0,...,<_{n-1})$ into $\A_n$, and this suffices for the consistency of $\Gamma_{n,N}(\pi,\phi)\cup e\diag(\UU)$, as required.
 \end{proof}

\subsubsection{A Remark on Localized $\opd$}

We now remark briefly on a localization of op-dimension to finite sets of formulas. It will probably come as no surprise that such a localized rendition of op-dimension amounts to little more than a restatement of the independence property.

\begin{defn}
Let $\pi(x)$ be a partial type. For a finite set $\Delta$ of $\L(\UU)$ formulas, we define,
$$\opd(\pi,\Delta) = \sup\left\{0<n<\omega: \opr_n(\pi,\Delta) = \infty\right\}\leq\omega.$$
\end{defn}

\begin{prop}
The theory $T = Th(\UU)$ has the independence property if and only if $\opd(\{x{=}x\},\Delta) = \omega$ for some tuple $x$ and some finite set $\Delta$ of formulas of $\L(\UU)$.
\end{prop}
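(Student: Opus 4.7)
My plan is to prove both directions by unpacking the very first step of the inductive definition of $\opr_n$ and pairing it with the compactness reformulation of IP as the existence of a fully shattered sequence.

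For the direction $(\Leftarrow)$, I will assume $\opd(\{x{=}x\},\Delta)=\omega$ for some finite $\Delta$ and some tuple $x$. Then $\opr_n(\{x{=}x\},\Delta)\geq 1$ for every $n$, which by definition unpacks to: for each $n$ there are formulas $\phi_0^{(n)},\dots,\phi_{n-1}^{(n)}\in\Delta$ and parameters $a_0^{(n)},\dots,a_{n-1}^{(n)}$ such that every one of the $2^n$ conjunctions $\bigwedge_{i<n}\phi_i^{(n)}(x,a_i^{(n)})^{\sigma(i)}$ is consistent. Since $\Delta$ is finite, pigeonhole produces for each $n$ a single $\phi\in\Delta$ together with a subset $I_n\subseteq n$, with $|I_n|\to\infty$, on which $\phi_i^{(n)}=\phi$ for all $i\in I_n$. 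Restricting each consistent $2^n$-conjunction to coordinates in $I_n$ shows the parameter set $\{a_i^{(n)}:i\in I_n\}$ is shattered by $\phi(x,-)$, so the independence dimension of $\phi$ is unbounded; hence $\phi$, and therefore $T$, has IP.

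For the direction $(\Rightarrow)$, I will assume $T$ has IP and fix a formula $\phi(x,y)\in\L(\UU)$ with IP. By compactness there is a set $\{b_j:j<\omega\}\subseteq\UU^y$ that is totally shattered: for every $f:\omega\to 2$ there is $a_f$ with $\models\phi(a_f,b_j)^{f(j)}$ for all $j$. I aim to show $\opr_n(\{x{=}x\},\phi)=\infty$ for every $n<\omega$, which gives $\opd(\{x{=}x\},\{\phi\})=\omega$. By the Fact characterizing $\opr_n$ via $\Gamma_{n,\beta}$-consistency, it suffices to realize $\Gamma_{n,\omega}(\{x{=}x\},\phi)$ inside $\UU$. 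I will injectively enumerate the set of ``node--coordinate'' pairs $\{(\tau,i):\tau\in(2^n)^{<\omega},\,i<n\}$ as indices $j(\tau,i)<\omega$, and then set $b_{\sigma,\ell,i}:=b_{j(\sigma\restriction\ell,\,i)}$; this assignment tautologically satisfies the coherence clause that $b_{\sigma,\ell,i}$ depend only on $\sigma\restriction\ell$ and $i$, since the label is read off from the node $\sigma\restriction\ell$. For each $\sigma\in(2^n)^\omega$, I define $f_\sigma:\omega\to 2$ by $f_\sigma(j(\tau,i))=\sigma_{|\tau|}(i)$ whenever $\tau$ is an initial segment of $\sigma$, extending $f_\sigma$ arbitrarily elsewhere; then the shattered family supplies $a_\sigma:=a_{f_\sigma}$ with $\models\phi(a_\sigma,b_{\sigma,\ell,i})^{\sigma_\ell(i)}$ throughout the branch of $\sigma$, giving the required realization.

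I expect the only real wrinkle to be the bookkeeping in $(\Rightarrow)$: the tree indexing $(2^n)^{<\omega}$ has to be laid down so that the ``$b$ depends only on the initial segment of $\sigma$'' condition is literally satisfied, which is why I attach parameters to nodes $\tau$ rather than trying to extract them from infinite branches. In both directions the only model-theoretic input is the Fact equating $\opr_n$ with $\Gamma_{n,\beta}$-consistency, together with the compactness form of IP; no heavier machinery such as $\phi_\Delta$-coding is required.
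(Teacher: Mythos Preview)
Your proof is correct in both directions, and it takes a genuinely different, more elementary route than the paper's. The paper argues through the $n$-MOP characterization of $\opd$: for $(\Rightarrow)$ it starts from an IP formula, builds an indiscernible sequence over which every subset is $\phi$-definable, and then invokes the generalized-indiscernibles machinery (Theorem~\ref{thm:mlo-theory-of-indisc}) to produce an indiscernible picture of $\A_n$ witnessing $n$-MOP for each $n$; for $(\Leftarrow)$ it reduces to a single $\phi$ and observes that any finite set equipped with all $N!$ of its linear orders embeds into $\A_{N!}$, so the $N!$-MOP witness yields a shattered set of size $N$. You bypass MOP and indiscernible pictures entirely: for $(\Leftarrow)$ you only use $\opr_n\geq 1$, which immediately gives a fully split $n$-tuple of instances, and pigeonhole on the finite $\Delta$ extracts a single $\phi$ with unbounded independence dimension; for $(\Rightarrow)$ you directly realize $\Gamma_{n,\omega}(\{x{=}x\},\phi)$ by labeling the nodes of $(2^n)^{<\omega}$ with elements of a shattered $\omega$-sequence. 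Your approach is self-contained and needs nothing beyond compactness and the $\Gamma_{n,\beta}$-characterization of $\opr_n$; the paper's approach, while heavier, is there to illustrate the MOP point of view that the surrounding subsection is developing.

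One small remark on your write-up of $(\Leftarrow)$: strictly, pigeonhole gives you, for each $n$, \emph{some} $\phi\in\Delta$ with at least $n/|\Delta|$ indices; a second application of pigeonhole over the finite set $\Delta$ then fixes a single $\phi$ that works for infinitely many $n$. This is implicit in what you wrote but worth making explicit.
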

\begin{proof}
Assuming $\phi(x,y)$ has the independence property in $T$, we show that $\opd(x{=}x,\phi)=\omega$. We may grant ourselves an indiscernible sequence $(e_a:a\in A)$ (where $A$ is the universe of $\A_n$ equipped with the first order $<_0^{\A_n}$) such that for every $Z\subseteq A$, there is some $b_Z\in \UU^y$ such that $\{a:\,\,\models\phi(e_a,b_Z)\} = Z$.
Given $0<n<\omega$, let $g:A\to \UU$ be an indiscernible picture of $\A_n$ in $\UU$ patterned on 
$$\EM:A^{<\omega}\to \UU^{<\omega}: (a_0,...,a_{k-1})\mapsto (e_{a_0},...,e_{a_{k-1}}).$$
Then, again, for every $Z\subseteq A$, there is a $b_Z\in \UU^y$ such that $\{a:\,\,\models\phi(g(a),b_Z)\} = Z$. Since (of course) multi-cuts are subsets of $A$, this demonstrates that $x{=}x$ has $n$-MOP via $\phi(x,y)$, so $\opd(\{x{=}x\},\Delta) \geq n$

\medskip
Conversely, suppose $\opd(\{x{=}x\},\Delta) = \omega$ for some tuple $x$ and some finite set $\Delta$ of formulas of $\L(\UU)$. Without loss of generality, we may assume that $\opd(\{x{=}x\},\phi) = \omega$ for some single formula $\phi(x,y)$. For a set $B$ of size $N<\omega$, there are $N!$ linear orders on $B$.  Enumerating all of these orders $<_0^B,...,<_{N!-1}^B$, we find ourselves with a finite substructure of $\A_{N!}$. Thus, for any $d<\omega$ one can find arbitrarily large finite sets $B\subset\UU^x$ such that 
$$|S_\phi(B)| = 2^{|B|}$$
showing that the independence dimension of $\phi(x,y)$ is unbounded -- i.e. $\phi(x,y)$ has the independence property.
\end{proof}

\subsection{op-Dimension as an Analog of dp-Rank: ICT- and IRD-patterns}

Thus far, we have seen op-dimension through the lens of the ``stability-like'' analysis of op-ranks and $n$-MOP.  On the other hand, op-dimension can also be characterized using analysis similar to that done on dp-rank; indeed, op-dimension $n$ can be seen as a close analog of dp-rank $n$.  With this in mind, we introduce another alternative definition of op-dimension.  Compare this to the definition of dp-rank given by Definition 2.1 and 2.2 of \cite{GuinHill}.

\begin{thm}\label{Thm_OPDimDpRank}
 Fix a partial type $\pi(x)$ over a parameter set $A$ and $n < \omega$.  The following are equivalent:
 \begin{enumerate}
  \item $\opd(\pi) \le n$;
  \item For all formulas $\varphi(x,y)$, for all indiscernible sequences $\langle b_q : q \in \QQ \rangle$ over $A$, and all $a \models \pi$, there exists $C_0 < ... < C_n$ a convex partition of $\mathbb{Q}$ such that, for each $i \le n$, the set 
  $$\{ q \in C_i : \,\,\models \varphi(a, b_q) \}$$
   is either finite or cofinite in $C_i$.
 \end{enumerate}
\end{thm}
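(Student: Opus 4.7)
The plan is to prove both directions by contrapositive, linked by a clean reformulation of the negation of (2).

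\textbf{Reformulation.} I would first establish that (2) fails if and only if there exist a formula $\varphi(x,y)$, an $A$-indiscernible sequence $(b_q)_{q \in \QQ}$, a realization $a \models \pi$, and $n+1$ pairwise disjoint non-degenerate convex sub-intervals $J_0 < J_1 < \cdots < J_n$ of $\QQ$ such that on each $J_i$, both $\{q \in J_i : \,\models \varphi(a,b_q)\}$ and its complement in $J_i$ are infinite. The ``if'' direction is pigeonhole: any convex partition of $\QQ$ into $n+1$ pieces has only $n$ boundaries, and two disjoint ordered intervals cannot both properly straddle the same point, so some $J_i$ lies in a single piece $C_j$, making $C_j$ bad. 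The ``only if'' direction is dual: if no such partition works, at least $n+1$ ``alternation-concentration'' points of $\{q : \,\models \varphi(a,b_q)\}$ must exist, around which disjoint intervals can be carved.

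\textbf{Direction (1)$\Rightarrow$(2), contrapositive.} Given the reformulated failure --- $\varphi$, $(b_q)_q$, $a$, and intervals $J_0 < \cdots < J_n$ --- I would build the $\opr_{n+1}$-tree by compactness. Inside each $J_i$, the infinite-and-co-infinite condition yields (after a standard Erd\H{o}s--Rado extraction inside $J_i$) a proper $\varphi(a,-)$-cut along an $A$-indiscernible subsequence of $J_i$. At each node $\tau \in (2^{n+1})^{<\omega}$ of the target tree I would then select $n+1$ parameters $b_{\tau,0},\ldots,b_{\tau,n}$, one from (a conjugate of) each cut; because the $J_i$ are disjoint and $(b_q)_q$ is $A$-indiscernible, conjugates of the single $a$ realize all $2^{n+1}$ sign patterns against $\varphi(x,b_{\tau,0}),\ldots,\varphi(x,b_{\tau,n})$. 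Iterating by compactness gives $\opr_{n+1}(\pi,\varphi) = \infty$, hence $\opd(\pi) \geq n+1$.

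\textbf{Direction (2)$\Rightarrow$(1), contrapositive.} Assume $\opr_{n+1}(\pi,\varphi) = \infty$; realize $\Gamma_{n+1,\omega}(\pi,\varphi) \cup e\diag(\UU)$ to obtain parameters $\{b_{\tau,i}\}$ for $\tau \in (2^{n+1})^{<\omega}$, $i \le n$, and realizations $a_\sigma \models \pi$ for $\sigma \in (2^{n+1})^{\omega}$. I would fix a single branch $\sigma^* \in (2^{n+1})^{\omega}$ chosen so that for each $i \le n$ the ``stream'' $(\sigma^*_\ell(i))_{\ell<\omega}$ has infinitely many $0$'s and $1$'s; this yields a fixed $a := a_{\sigma^*} \models \pi$ together with, for each $i$, an $\omega$-indexed sequence of parameters along the branch against which $\varphi(a,-)$ alternates infinitely often. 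Interleaving these $n+1$ streams into $n+1$ disjoint order-intervals of a single $\omega$-sequence and applying an Erd\H{o}s--Rado / EM-type extraction over $A$ yields an $A$-indiscernible $\QQ$-sequence containing $n+1$ disjoint ``bad'' sub-intervals $J_i$ as in the reformulation.

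\textbf{Main obstacle.} The hardest step is the EM-extraction in (2)$\Rightarrow$(1): one must preserve the $n+1$ independent alternation regions when passing to the indiscernible subsequence, since Ramsey could in principle ``smooth away'' the alternations. I would address this by first making the tree of $\Gamma$-parameters indiscernible (as a tree) over $A$ via a preliminary Erd\H{o}s--Rado/compactness argument, and then choosing $\sigma^*$ with enough invariance that the interleaved streams remain incomparable under EM-type collapses. The symmetric concern in (1)$\Rightarrow$(2) --- realizing all $2^{n+1}$ sign patterns by conjugates of the single $a$ --- is handled by combining $A$-indiscernibility (shuffling within a single $J_i$) with disjointness (decoupling across different $J_i$'s), which is the structural analogue of the $(n+1)$-multi-order condition inside a single $\QQ$-order.
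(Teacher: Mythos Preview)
Your reformulation of $\neg(2)$ is not correct as stated. Take an $A$-indiscernible $(b_q)_{q\in\QQ}$ and $a\models\pi$ with $\models\varphi(a,b_q)$ exactly when $q$ is a non-negative integer: every convex partition of $\QQ$ into finitely many pieces has an unbounded-above piece, which is bad, so (2) fails for every $n$; yet any bad interval must itself be unbounded above, so no two of them are disjoint. The paper closes this hole by explicitly invoking NIP (``Since $\varphi$ has NIP, we know it has finite alternation rank''), which forces the truth set to be, modulo finitely many points, a finite union of convex pieces; after trimming, one has $n+2$ infinite convex blocks of alternating truth value and builds the $\opr_{n+1}$-tree by choosing parameters approaching the $n+1$ cut points. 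Once you insert that NIP step, your $\neg(2)\Rightarrow\neg(1)$ becomes essentially the paper's argument, and the ``Erd\H{o}s--Rado extraction inside $J_i$'' is superfluous --- the sequence is already $A$-indiscernible, and finite alternation hands you the cut directly.

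The more serious gap is in your $\neg(1)\Rightarrow\neg(2)$. A single branch $\sigma^*$ of the $\Gamma_{n+1,\omega}$-tree gives only one realization $a$ and one parameter stream, and when you EM-extract that stream over $A$, nothing forces the relevant existential pattern to survive: for a subsequence $\ell_0<\cdots<\ell_{K-1}$ of the original, $a$ witnesses only the pattern $(\sigma^*_{\ell_j}(i))_{j,i}$, and Ramsey is free to land on a subsequence along which each coordinate of $\sigma^*$ is constant, destroying all the alternation you need. Your suggested fix of making the whole tree indiscernible first does not obviously help either, since the parameters at level $\ell$ depend on the branch below $\ell$, so distinct branches never test against a common family of $b$'s --- the tree simply does not hand you an IRD-pattern. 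The paper takes a genuinely different route here: it starts from the $(n+1)$-MOP characterization, which already supplies an indiscernible picture $g:\A_{n+1}\to\pi(\UU)$ with every multi-cut $\varphi$-definable. Density of the generic multi-order then produces, for \emph{every} $f:(n+1)\to\omega$, a realization $g(a)\in\pi(\UU)$ exhibiting the $f$-cut pattern against a \emph{single fixed} sequence of parameters --- an IRD-type configuration is present before any extraction --- and only then is compactness/Ramsey applied on the parameter side, after which a parity formula reads off the required $n+1$ alternations.
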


\begin{proof}
 (1) $\Rightarrow$ (2): Suppose that $\opd(\pi) > n$, hence $\pi$ has $(n+1)$-MOP.  Fix an indiscernible picture $( \mathcal{A}_{n+1}, g)$ in $\pi(\mathbb{U})$ and a formula $\varphi(x, y)$ witnessing this.  For each $i \le n$, let $\mathcal{C}_i$ be the set of all $<_i$-cuts of $A$, and let $\mathcal{C} = \prod_{i \le n} \mathcal{C}_i$.  We can multi-order $\mathcal{C}$ via
 \[
  \langle X_0, ..., X_n \rangle \le_i \langle X'_0, ..., X'_n \rangle \text{ iff } X_i \subseteq X'_i.
 \]
 Moreover, for each $c = \langle X_0, ..., X_n \rangle \in \mathcal{C}$, choose $b_c = \langle b_{c,0}, ..., b_{c,n} \rangle \in \mathbb{U}_y^{n+1}$ such that, for all $i \le n$,
 \[
  X_i = \{ a \in A : \models \varphi(g(a), b_{c,i}) \}.
 \]
 Finally, choose a sequence $c_0, c_1, ...$ from $\mathcal{C}$ such that, if $j < k < \omega$, then $c_j <_i c_k$ for all $i \le n$.  Let $f : (n+1) \rightarrow \omega$ be any function.  By $<_i$-density of $A$ for each $i \le n$, there exists $a \in A$ such that
 \[
  \models \varphi(g(a), b_{c_j,i}) \text{ iff } j > f(i)
 \]
 for each $i \le n$.  By compactness and Ramsey's Theorem, there exists $\langle b'_q : q \in \QQ \rangle$ indiscernible and $a' \models \pi$ such that
 \[
  \models \varphi(a', b'_{q,i}) \text{ iff } q > i
 \]
 for each $i \le n$.  Let $\psi(x; y_0, ..., y_n)$ be the formula that holds if evenly many of $\varphi(x, y_i)$ holds for $i \le n$.  Let $\sim$ be the natural convex equivalence relation on $\mathbb{Q}$ generated by $\psi$, namely
 \[
  q \sim r \text{ iff } (\forall q')(q < q' \le r \Rightarrow \models [ \psi(a', b'_q) \leftrightarrow \psi(a', b'_{q'}) ] ).
 \]
 Then, $\sim$ has exactly $n+1$ classes, each infinite.  Thus, we see that $\langle b'_q : q \in \QQ \rangle$, $a$, and $\psi(x, \overline{y})$ is a witness to the failure of (2).
 
 (2) $\Rightarrow$ (1): Suppose (2) fails, witnessed by $\varphi(x, y)$, $\langle b_q : q \in \QQ \rangle$, and $a \models \pi$.  Since $\varphi$ has NIP, we know it has finite alternation rank.  Therefore, by possibly trimming down the sequence and replacing $\varphi$ with $\neg \varphi$, we may assume that $C_0 < ... < C_{n+1}$ is a convex partition of $\mathbb{Q}$, with each $C_i$ infinite, and
 \[
  \models \varphi(a, b_q) \text{ iff } q \in C_i \text{ for some } i \le n+1 \text{ even.}
 \]
 Now we show that $\opr_{n+1}(\pi, \varphi) = \infty$, showing that $\opd(\pi) > n$.
 
 Fix $K < \omega$ and choose $\overline{\sigma} = \langle \sigma_k : k < K \rangle \in ({}^{n+1} 2)^K$.  Suppose we have constructed $q_{i,k} \in (C_i \cup C_{i+1})$ for each $i \le n$ and $k < K$ so that $q_{i,k} \in C_i$ if and only if $\sigma_k(i) = 0$.  Moreover, assume that if $k_0 < k_1 < K$, then
 \begin{enumerate}
  \item $\sigma_{k_0}(i) = \sigma_{k_1}(i) = 0$ implies $q_{i,k_0} < q_{i,k_1}$, and
  \item $\sigma_{k_0}(i) = \sigma_{k_1}(i) = 1$ implies $q_{i,k_1} < q_{i,k_0}$.
 \end{enumerate}
 That is, the sequences $q_{i,k}$ approach the cut between $C_i$ and $C_{i+1}$.  For each $i \le n$, choose $q^*_i \in \mathbb{Q}$ anywhere between all the $q_{i,k}$ for $\sigma_k(i) = 0$ and the $q_{i,k}$ for $\sigma_k(i) = 1$.  Thus, by indiscernibility, 
 \[
  \pi(x) \cup \{ \varphi(x, b_{q_{i,k}})^{i + \sigma_k(i) \ (\mathrm{mod}\ 2)} : k < K, i \le n \} \cup \{ \varphi(x, b_{q^*_i})^{\eta(i)} : i \le n \}
 \]
 is consistent for all $\eta \in {}^{n+1} 2$.  Hence, by induction,
 \[
  \opr_{n+1}(\pi, \varphi) \ge K.
 \]
 Since $K$ was arbitrary, we see that $\opr_{n+1}(\pi, \varphi) = \infty$, as desired.
\end{proof}

We define the notion of an IRD-pattern, given in Definition III.7.1 of \cite{Shelah} and Section 5 of \cite{Adler}, which closely resembles an ICT-pattern (used for dp-rank).  In \cite{Shelah}, Shelah notes that ``IRD'' is an abbreviation for ``independent orders.''  Shelah only considers infinite IRD-patters, but we will diverge from this and consider only finite patterns.

\begin{defn}\label{Defn_oICTPattern}
 Fix a partial type $\pi(x)$, $n < \omega$, and $\alpha$ an ordinal.  Consider a sequence of formulas $\overline{\psi} = \langle \psi_i(x, y_i) : i < n \rangle$ and a sequence $\overline{b} = \langle b_{j,i} : j < \alpha, i < n \rangle$ where each $b_{j,i}$ is of the same sort as $y_i$.  We say that $\langle \overline{\psi}, \overline{b} \rangle$ forms an \emph{IRD-pattern in $\pi(x)$ of depth $n$ and length $\alpha$} if, for all $f : n \rightarrow \alpha$, the following type is consistent
 \[
  \pi(x) \cup \{ \neg \psi_i(x, b_{j,i}) : i < n, j < f(i) \} \cup \{ \psi_i(x, b_{j,i}) : i < n, f(i) \le j < \alpha \}.
 \]
\end{defn}

\begin{lemma}\label{Lem_OPRank000}
 Fix a partial type $\pi(x)$.  Then $\opd(\pi) \le n$ if and only if there exists no IRD-pattern in $\pi$ of depth $n+1$ and length $\omega$.
\end{lemma}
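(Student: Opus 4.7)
The plan is to establish both directions by contrapositive, using the characterization of $\opd(\pi)\le n$ from Theorem~\ref{Thm_OPDimDpRank}. That result says $\opd(\pi) > n$ is equivalent to the existence of some $\varphi(x,y)$, an $A$-indiscernible sequence $\langle b_q : q \in \QQ \rangle$, and $a \models \pi$ such that $\{q : \,\models \varphi(a, b_q)\}$ fails to admit a convex partition of $\QQ$ into $n+1$ finite-or-cofinite blocks; by NIP of $\varphi$, this is equivalent to the alternation-cut pattern of $\varphi(a, b_q)$ having at least $n+2$ infinite convex pieces on which the truth values strictly alternate. So the lemma reduces to showing that such a ``long alternation'' datum exists if and only if an IRD-pattern in $\pi$ of depth $n+1$ and length $\omega$ does.

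To go from $\opd(\pi) > n$ to an IRD-pattern, I would start from a witness $(\varphi, \langle b_q\rangle, a)$ as above, and after possibly replacing $\varphi$ with $\neg \varphi$, assume $C_0 < C_1 < \cdots < C_{n+1}$ is a convex partition of $\QQ$ into infinite pieces with $\models \varphi(a, b_q)$ iff $q \in C_i$ for some even $i$. Fix $r_0 < r_1 < \cdots < r_n$ in $\QQ$ lying inside pairwise disjoint open intervals $I_0 < \cdots < I_n$, and inside each $I_i$ an increasing sequence $p_{0,i} < p_{1,i} < \cdots$ converging to $r_i$. Put $b_{j,i} := b_{p_{j,i}}$ and let $\psi_i$ be $\varphi$ or $\neg\varphi$ according to the parity of $i$. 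The required IRD consistency then reduces, for each $f : \{0,\ldots,n\} \to \omega$, to realizing a copy $a^*$ of $a$ whose $n+1$ alternation cuts lie in the respective sub-intervals $(p_{f(i)-1,i}, p_{f(i),i}) \subseteq I_i$ (with $p_{-1,i} := -\infty$); since these target cuts are strictly increasing in $i$, the realization exists by indiscernibility of $\langle b_q \rangle$ over $A$ together with $a \models \pi$.

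To go from an IRD-pattern to $\opd(\pi) > n$, I would apply the standard Ramsey-and-compactness procedure for IRD-patterns to replace the given pattern by one whose $n+1$ rows $(b_{j,i})_{j \in \QQ}$ are indexed by $\QQ$ and are mutually $A$-indiscernible, while preserving ``for every $f : \{0,\ldots,n\} \to \QQ$, the associated IRD-type is consistent.'' Put $\tilde{b}_j := (b_{j,0}, \ldots, b_{j,n})$, so $\langle \tilde{b}_j : j \in \QQ\rangle$ is an $A$-indiscernible sequence of tuples. The key trick is the XOR formula $\tilde{\psi}(x; y_0, \ldots, y_n) := \bigoplus_{i \le n} \psi_i(x, y_i)$: if $f : \{0,\ldots,n\} \to \QQ$ is strictly increasing and $a_f$ realizes the IRD-type for $f$, then $\tilde{\psi}(a_f, \tilde{b}_j)$ has the parity of $|\{i : j \ge f(i)\}|$, which flips its truth value exactly $n+1$ times as $j$ traverses $\QQ$. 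That yields $n+2$ infinite convex pieces, contradicting Theorem~\ref{Thm_OPDimDpRank}(2) for the triple $(\tilde{\psi}, \langle \tilde{b}_j\rangle, a_f)$, so $\opd(\pi) > n$.

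The main obstacle is this Ramsey-and-compactness step in the second direction: one must simultaneously stretch the IRD-pattern from length $\omega$ to length $\QQ$ and enforce mutual $A$-indiscernibility on the $n+1$ rows without destroying the ``for all $f$'' consistency clause. This is routine for IRD-patterns (see~\cite{Adler}) and proceeds by iterating an Erd\H{o}s--Rado-style extraction row-by-row inside a sufficiently saturated monster; no new ideas are needed, but the bookkeeping is where care is required.
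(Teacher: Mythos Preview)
Your proposal is correct and follows essentially the same approach as the paper: both directions are argued by contrapositive through Theorem~\ref{Thm_OPDimDpRank}, and the parity/XOR formula combining the $\psi_i$'s into a single $\varphi$ is exactly the device the paper uses in the IRD-to-$\opd$ direction. One simplification worth noting, since you flagged the row-by-row Erd\H{o}s--Rado extraction as the main obstacle: the paper avoids mutual indiscernibility of the rows altogether by first bundling them into tuples $c_j = \langle b_{j,0},\ldots,b_{j,n}\rangle$ and then extracting a single $A$-indiscernible sequence $\langle c_q : q\in\QQ\rangle$ via ordinary Ramsey-and-compactness, which preserves the consistency of the alternating types and removes the bookkeeping you were worried about.
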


\begin{proof}
 ($\Rightarrow$): Suppose there exists an IRD-pattern in $\pi$ of depth $n+1$ and length $\omega$, say $\langle \overline{\psi}, \overline{b} \rangle$.  Let $\varphi(x; y_0, ..., y_n)$ be the formula that holds if and only if an even number of $\psi_i(x, y_i)$ hold and let $c_j = \langle b_{j,0}, ..., b_{j,n} \rangle$ for each $j < \omega$.  For each strictly monotonic $f : (n+1) \rightarrow \omega$, the type
 \begin{align}\label{Eq_ICToType}
  \pi(x) \cup & \{ \varphi(x, c_j) : f(i-1) \le j < f(i) \text{ for even } i \le n+1 \} \cup \\ \nonumber & \{ \neg \varphi(x, c_j) : f(i-1) \le j < f(i) \text{ for odd } i \le n+1 \},
 \end{align}
 is consistent, where we interpret $f(-1) = 0$ and $f(n+1) = \omega$.  By Ramsey's Theorem and compactness, we may assume that the $c_q$ are indexed by $q \in \QQ$ and that $\overline{c} = \langle c_q : q \in \QQ \rangle$ is indiscernible.  Fix $f : (n+1) \rightarrow \mathbb{Q}$ such that $f(i) = i$ for all $i \le n$ and fix $a$ a realization of \eqref{Eq_ICToType}.  Then $\overline{c}$ and $a$ are witnesses to the fact that $\pi(x)$ has op-dimension $> n$ (as in Theorem \ref{Thm_OPDimDpRank} (2)).
 
 ($\Leftarrow$): Suppose $\pi(x)$ has op-dimension $> n$, witnessed by $\varphi(x, y)$, $\langle b_q : q \in \QQ \rangle$, and $a \models \pi$ (as in Theorem \ref{Thm_OPDimDpRank} (2)).  Since $\varphi$ is NIP, $\varphi$ has finite alternation rank, hence there exists a minimal finite convex partition $\mathcal{C}$ of $\mathbb{Q}$ so that, for each $C \in \mathcal{C}$, there exists $D \subseteq C$ cofinite in $C$ such that, for all $q, r \in D$, $\models \varphi(a, b_q) \leftrightarrow \varphi(a, b_r)$.  Since this is a witness to the op-dimension being greater than $n$, there exists $C_0 < C_1 < ... < C_{n+1}$ from $\mathcal{C}$ with alternating majority truth value of $\varphi(a, b_q)$.  Let $\psi_i(x, y)$ be either $\varphi(x,y)$ or $\neg \varphi(x,y)$ such that, for all $i \le n$ and cofinitely many $q \in C_i$, $\models \psi_i(a, b_q)$ if and only if $i$ is odd.  By indiscernibility over $A$ and compactness, we see that $\langle \psi_i : i \le n \rangle$ together with $\langle b_{i + 1/(j+1)} : j < \omega, i \le n \rangle$ form an IRD-pattern of depth $n+1$ and length $\omega$ in $\pi(x)$.
\end{proof}

We see now that there is an obvious relationship between dp-rank and op-dimension.

\begin{defn}\label{Defn_ICTPattern}
 Fix a partial type $\pi(x)$, $n < \omega$, and $\alpha$ an ordinal.  Consider a sequence of formulas $\overline{\psi} = \langle \psi_i(x, y_i) : i < n \rangle$ and a sequence $\overline{b} = \langle b_{j,i} : j < \alpha, i < n \rangle$ where each $b_{j,i}$ is of the same sort as $y_i$.  We say that $\langle \overline{\psi}, \overline{b} \rangle$ forms an \emph{ICT-pattern in $\pi(x)$ of depth $n$ and length $\alpha$} if, for all $f : n \rightarrow \alpha$, the following type is consistent
 \[
  \pi(x) \cup \{ \neg \psi_i(x, b_{j,i}) : i < n, j < \alpha, j \neq f(i) \} \cup \{ \psi_i(x, b_{f(i),i}) : i < n \}.
 \]
 We say that a type $\pi(x)$ has \emph{dp-rank} $\ge n$ if there exists an ICT-pattern in $\pi$ of depth $n$ and length $\omega$.  We denote this by $\dpr(\pi) \ge n$.
\end{defn}

The next proposition is straightforward, and implicitly shown in \cite{Adler}, but we give a proof here for completeness.

\begin{prop}\label{Prop_dpop}
 Let $\pi(x)$ be a partial type with finite dp-rank.  Then,
 \[
  \opd(\pi) \le \dpr(\pi).
 \]
\end{prop}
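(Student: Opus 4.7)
The plan is to argue by contrapositive: assuming $\opd(\pi) > n$, I will produce an ICT-pattern of depth $n+1$ and length $\omega$ in $\pi(x)$, thereby showing $\dpr(\pi) > n$. Since $\dpr(\pi)$ is assumed finite, applying this at $n = \dpr(\pi)$ forces $\opd(\pi) \le \dpr(\pi)$. The first step is to invoke Lemma \ref{Lem_OPRank000}: from $\opd(\pi) > n$ we obtain an IRD-pattern of depth $n+1$ and length $\omega$ in $\pi(x)$, that is, a sequence of formulas $\overline{\psi} = \langle \psi_i(x, y_i) : i < n+1 \rangle$ together with parameters $\langle b_{j,i} : j < \omega,\, i < n+1 \rangle$, such that for every $f : (n+1) \to \omega$ there is $a \models \pi$ satisfying $\neg\psi_i(a, b_{j,i})$ for $j < f(i)$ and $\psi_i(a, b_{j,i})$ for $j \ge f(i)$.

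The heart of the argument is the combinatorial step of converting the ``monotone cut'' structure of the IRD-pattern into the ``single spike'' structure of an ICT-pattern. To that end, define
\[
  \chi_i(x; y, y') \,:=\, \psi_i(x, y) \wedge \neg \psi_i(x, y'), \qquad c_{j,i} \,:=\, (b_{j+1,i},\, b_{j,i}),
\]
and claim that $\langle \chi_i : i < n+1 \rangle$ together with $\langle c_{j,i} : j < \omega,\, i < n+1 \rangle$ forms an ICT-pattern of depth $n+1$ and length $\omega$ in $\pi(x)$. Indeed, given any $g : (n+1) \to \omega$, set $f(i) := g(i) + 1$ and apply the IRD-pattern to obtain some $a \models \pi$ as above. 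For each $i$, the formula $\chi_i(a, c_{j,i})$ unpacks to $\psi_i(a, b_{j+1,i}) \wedge \neg \psi_i(a, b_{j,i})$, which holds iff $j+1 \ge f(i)$ and $j < f(i)$ --- equivalently, iff $j = g(i)$. Hence this $a$ witnesses the required consistency for the function $g$, and since $g$ was arbitrary, Definition \ref{Defn_ICTPattern} gives $\dpr(\pi) \ge n+1$.

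I do not anticipate any substantive obstacle: the proof is, in essence, the observation that a single jump point can be recovered as a unique local configuration by conjoining values at consecutive indices. The only bookkeeping point worth flagging is that the trick requires the IRD index set to have a successor structure (which $\omega$ of course provides) and that one must shift by $1$ so that $g$ may range over all of $\omega$ rather than only the positive integers; this is exactly the role of taking $f(i) = g(i) + 1$ in the verification above.
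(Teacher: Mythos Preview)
Your proof is correct and follows essentially the same approach as the paper: both convert an IRD-pattern into an ICT-pattern by pairing parameters and using a two-instance combination of $\psi_i$ to isolate the unique jump point. The only cosmetic differences are that the paper uses the XOR formula $\neg[\psi_i(x,y_0)\leftrightarrow\psi_i(x,y_1)]$ with disjoint pairs $c_{j,i}=(b_{2j,i},b_{2j+1,i})$, whereas you use the conjunction $\psi_i(x,y)\wedge\neg\psi_i(x,y')$ with overlapping consecutive pairs $c_{j,i}=(b_{j+1,i},b_{j,i})$; both encode the same ``single spike at the cut'' idea.
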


\begin{proof}
 Fix $n > \omega$ and let $\overline{\psi} = \langle \psi_i(x, y_i) : i < n \rangle$ together with $\overline{b} = \langle b_{j,i} : j < \omega, i < n \rangle$ be an IRD-pattern of depth $n$ and length $\omega$ in $\pi(x)$.  Let
 \[
  \varphi_i(x; y_{0,i}, y_{1,i}) = \neg [ \psi_i(x, y_{0,i}) \leftrightarrow \psi_i(x, y_{1,i}) ]
 \]
 and let
 \[
  c_{j,i} = \langle b_{2j, i}, b_{2j+1, i} \rangle.
 \]
 Notice that $\langle \varphi_i : i < n \rangle$ together with $\langle c_{j,i} : j < \omega, i < n \rangle$ is an ICT-pattern of depth $n$ and length $\omega$ in $\pi(x)$.  Therefore, $\opd(\pi) \ge n$ implies $\dpr(\pi) \ge n$.
\end{proof}

In particular, if $T$ is dp-minimal (e.g., o-minimal), then $\opd(\UU) \le \dpr(\UU) \le 1$.

Many proofs in the literature establishing the existence of an ICT-pattern implicitly go through an IRD-pattern.  For example, the proof Fact 2.7 of \cite{DGL} first builds an IRD-pattern, then an ICT-pattern from it as in the proof of Proposition \ref{Prop_dpop} above.

In \cite{KOUdpmin}, it is shown that dp-rank is sub-additive in the following sense:
\[
 \dpr(\tp(a,b/C)) \le \dpr(\tp(a/C)) + \dpr(\tp(b/C\cup\{a\})).
\]
This is proved using the technology of mutually indiscernible sequences.  We adapt this for the op-dimension setting using something called almost mutually indiscernible sequences.

\subsection{Almost-indiscernible Sequences}

\begin{defn}\label{Defn_AlmostMutuallyIndisc}
 Fix a set $X$, a collection of sequences
 \[
  \mathcal{J} = \left\{ \langle b_{j,i} : j \in J_i \rangle : i \in X \right\},
 \]
 and a set of formulas
 \[
  \Delta( y_{k,i} )_{k < K_i, i \in X}.
 \]
 We say that $\mathcal{J}$ is $\Delta$-mutually-indiscernible if, for all sequences
 \[
  j_{0,i} < ... < j_{K_i-1,i} \text{ and } \ell_{0,i} < ... < \ell_{K_i-1,i}
 \]
 from $J_i$ for each $i \in X$, and for all $\delta \in \Delta$, we have that
 \[
  \models \delta( b_{j_{k,i},i} )_{k < K_i, i \in X} \leftrightarrow \delta( b_{\ell_{k,i},i} )_{k < K_i, i \in X}.
 \]
 (We note that this depends heavily on the partition of variables in formulas in $\Delta$.) For a set of parameters $A$, we say that $\mathcal{J}$ is \emph{almost mutually indiscernible over $A$} if, for each formula $\delta$ over $A$ as above, there exists $J'_i \subseteq J_i$ finite for each $i \in X$ such that the collection of sequences
 \[
  \left\{ \langle b_{j,i} : j \in (J_i \setminus J'_i) \rangle : i \in X \right\}
 \]
 is $\delta$-mutually-indiscernible.  We say that $\langle b_j : j \in J \rangle$ is \emph{almost indiscernible over $A$} if $\{ \langle b_j : j \in J \rangle \}$ is almost mutually indiscernible over $A$ (where $|X| = 1$).
\end{defn}

\begin{lemma}\label{Lem_Limits}
 Let $\mathcal{J} = \left\{ \langle b_{j,i} : j \in J_i \rangle : i \in X \right\}$ be a set of almost mutually indiscernible sequences, $\delta( y_{k,i} )_{k < K_i, i \in X}$ any formula (over any parameter set), and $\sigma_i : \omega \rightarrow J_i$ a strictly monotone function for each $i \in X$.  Then, there exists $M_i < \omega$ for each $i \in X$ and $t < 2$ such that, for all $M_i < j_{0,t} < ... < j_{K_i-1,i} < \omega$ for each $i \in X$,
 \[
  \models \delta( b_{\sigma_i(j_{k,i}),i} )^t_{k < K_i, i \in X}.
 \]
 That is, there is a ``limit truth value'' for $\delta$ under $\langle \sigma_i : i \in X \rangle$.
\end{lemma}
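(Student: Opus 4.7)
The plan is to apply the definition of almost mutual indiscernibility directly to the formula $\delta$, and then transport the resulting finite exception sets back through the strictly monotone maps $\sigma_i$. The whole argument is a matter of unpacking definitions together with one elementary observation about strictly monotone maps into $\omega$.

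First, I would invoke the definition: after harmlessly enlarging the indiscernibility base $A$, if necessary, to include the parameters of $\delta$, the formula $\delta$ supplies finite sets $J'_i \subseteq J_i$ for each $i \in X$ such that the truncated system $\{\langle b_{j,i} : j \in J_i \setminus J'_i\rangle : i \in X\}$ is $\delta$-mutually indiscernible. Since $\delta$ has only finitely many free variables, $K_i = 0$ for all but finitely many $i \in X$; those coordinates impose no constraint, so we may set $M_i = 0$ for them and focus on the remaining finite collection of $i$'s. For each such $i$, since $J'_i$ is finite and $\sigma_i$ is strictly monotone (hence injective), the preimage $\sigma_i^{-1}(J'_i)$ is finite, so I would set $M_i := \max \sigma_i^{-1}(J'_i)$ (taking $M_i = 0$ if this set is empty). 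Then $\sigma_i(j) \in J_i \setminus J'_i$ for every $j > M_i$.

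To conclude, given any tuple $M_i < j_{0,i} < \cdots < j_{K_i-1,i} < \omega$ in each coordinate, strict monotonicity of $\sigma_i$ turns these into increasing tuples $\sigma_i(j_{0,i}) < \cdots < \sigma_i(j_{K_i-1,i})$ lying entirely in $J_i \setminus J'_i$. By $\delta$-mutual indiscernibility of the truncated system, the truth value of $\delta(b_{\sigma_i(j_{k,i}),i})_{k < K_i,\, i \in X}$ depends only on the relative order of indices within each coordinate, which is fixed; hence this truth value is the same for every such choice of $j_{k,i}$'s. Call it $t < 2$. The only conceptual friction here is the parameter bookkeeping, since almost mutual indiscernibility is stated relative to a specific base $A$ while $\delta$ is allowed arbitrary parameters; as long as one silently absorbs $\delta$'s parameters into $A$, nothing else is required.
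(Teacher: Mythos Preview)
There is a genuine gap, and it is precisely the point you flag as ``only conceptual friction.'' You write that one may ``harmlessly enlarg[e] the indiscernibility base $A$ \ldots\ to include the parameters of $\delta$,'' but this step is illegitimate: almost mutual indiscernibility over $A$ does \emph{not} pass to supersets of $A$. Indeed, the remark immediately following the lemma in the paper (``if $\mathcal{J}$ is almost mutually indiscernible over $\emptyset$, then $\mathcal{J}$ is almost mutually indiscernible over any set of parameters'') is stated as a \emph{consequence} of the lemma, not as an input to it. Your argument is circular at exactly this point.

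To see that something substantive is needed, note that your proof never invokes NIP, yet the lemma is false without it. If $\phi(x,y)$ has the independence property, take an indiscernible sequence $(b_j)_{j<\omega}$ and an element $a$ with $\models\phi(a,b_j)$ iff $j$ is even. Then $(b_j)$ is (almost) indiscernible over $\emptyset$, but the formula $\delta(y)=\phi(a,y)$ has no limit truth value along $\sigma=\mathrm{id}$. The paper's proof avoids this by separating out the extra parameter $a$, assuming the conclusion fails, building an alternating array, and then applying almost mutual indiscernibility only to the parameter-free formulas $\theta_\eta(z_0,\dots,z_{N-1})=\exists x\bigwedge_{w<N}\delta(x;z_w)^{\eta(w)}$; this produces $2^N$ $\phi$-types over an $N$-element set for every $N$, contradicting NIP. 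That use of NIP is the missing idea in your argument.
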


\begin{proof}
 Write $\delta$ as $\delta(a; y_{k,i} )_{k < K_i, i \in X}$ for $\delta(x; y_{k,i} )_{k < K_i, i \in X}$ a formula over $\emptyset$.  Since $T$ is NIP, $\delta$ is NIP, so suppose it has independence dimension $< N$.
 
 Suppose the conclusion fails.  We build a sequence with alternating truth values on $\delta$ to get a contradiction.  First, choose for each $i \in X$,
 \[
  0 < j^0_{0,i} < ... < j^0_{K_i-1,i} < \omega
 \]
 arbitrarily so that $\models \neg \delta( a; b_{\sigma_i(j^0_{k,i}),i} )_{k < K_i, i \in X}$ (by assumption, this exists).  Now, suppose that $j^\ell_{0,i} < ... < j^\ell_{K_i-1,i}$ is constructed for $\ell \ge 0$, $i \in X$ so that
 \[
  \models \delta( a; b_{\sigma_i(j^\ell_{k,i}),i} )^{\ell \ (\mathrm{mod}\ 2)}_{k < K_i, i \in X}.
 \]
 Let $M_i = j^\ell_{K_i-1,i}$.  By assumption, these $\langle M_i : i \in X \rangle$ and $t = \ell \ (\mathrm{mod}\ 2)$ do not satisfy the conclusion.  Therefore, there exists, for each $i \in X$,
 \[
  M_i < j^{\ell+1}_{0,i} < ... < j^{\ell+1}_{K_i-1,i} < \omega
 \]
 such that
 \[
  \models \delta( a; b_{\sigma_i(j^{\ell+1}_{k,i}),i} )^{\ell+1 \ (\mathrm{mod}\ 2)}_{k < K_i, i \in X}.
 \]
 Notice that, for each $\eta \in {}^N 2$, the formula
 \[
  \theta_\eta( z_0, ..., z_{N-1} ) = \exists x \left( \bigwedge_{w < N} \delta(x; z_w)^{\eta(w)} \right)
 \]
 is over $\emptyset$.  Therefore, by almost mutual indiscernibility of $\mathcal{J}$, we may assume that the sequence
 \[
  \langle \langle b_{\sigma_i(j^\ell_{k,i}),i} : k < K_i, i \in X \rangle : \ell < \omega \rangle
 \]
 is $\{ \theta_\eta : \eta \in {}^N 2 \}$-indiscernible (since $\mathcal{J}$ is merely \emph{almost} mutually indiscernible, we may have to remove a finite portion of the beginning).  Now, for each $\eta \in {}^N 2$, we have by definition
 \[
  \models \bigwedge_{\ell < N} \delta( a; b_{\sigma_i(j^{2\ell+\eta(\ell)}_{k,i}),i} )^{\eta(\ell)}_{k < K_i, i \in X}.
 \]
 By $\{ \theta_\eta : \eta \in {}^N 2 \}$-indiscernibility, we get that
 \[
  \models \exists x \left( \bigwedge_{\ell < N} \delta( x; b_{\sigma_i(j^\ell_{k,i}),i} )^{\eta(\ell)}_{k < K_i, i \in X} \right).
 \]
 Since $\eta$ was arbitrary, this contradicts the fact that $\delta$ has independence dimension $< N$.
\end{proof}

In particular, if $J_i = \omega$ for all $i \in X$ and $\mathcal{J}$ is almost mutually indiscernible over $\emptyset$, then $\mathcal{J}$ is almost mutually indiscernible over any set of parameters.  We use this develop the notion of limit types.

\begin{defn}\label{Defn_LimitType}
 Let $\mathcal{J} = \left\{ \langle b_{j,i} : j \in J_i \rangle : i \in X \right\}$ be an almost mutually indiscernible sequence over a parameter set $A$, $\overline{y} = \langle y_{k,i} : k < K_i, i \in X \rangle$ a tuple of variables, and $\sigma_i : \omega \rightarrow J_i$ a strictly monotone function for each $i \in X$.  Then, for any set of parameters $B$, define the \emph{limit type of $\mathcal{J}$ in the variables $\overline{y}$ under $\overline{\sigma} = \langle \sigma_i : i \in X \rangle$} as follows: For $\delta(\overline{y})$ over $B$,
 \[
  \delta( \overline{y} ) \in \lim_{\overline{\sigma}} (\mathcal{I} / B)(\overline{y})
 \]
 if and only if there exists $M_i < \omega$ for each $i \in X$ such that, for all $M_i < j_{0,i} < ... < j_{K_i-1,i} < \omega$ for each $i \in X$,
 \[
  \models \delta( b_{\sigma_i(j_{k,i}),i} )_{k < K_i, i \in X}.
 \]
\end{defn}

By Lemma \ref{Lem_Limits} above, this is a complete type over $B$ in the variables $\overline{y}$ (that is consistent by compactness).

Fix an ordinal $\alpha < \omega^2$ and let $M < \omega$ be maximal such that $\omega \cdot M \le \alpha$.  For each $m < M$, define the injection $\sigma_m : \omega \rightarrow \alpha$ as follows:
\[
 \sigma_m(i) = (\omega \cdot m) + i.
\]
With this setup, we get the following lemma:

\begin{lemma}\label{Lem_Alternation}
 Fix a finite set $X$ and fix a parameter set $B$.  Suppose that $\mathcal{J} = \{ \langle b_{j,i} : j \in \alpha \rangle : i \in X \}$ is almost mutually indiscernible over $\emptyset$ but not almost mutually indiscernible over $B$.  Then there exists $i_0 \in X$, $m_i < M$ for each $i \in X \setminus \{ i_0 \}$, $m^*_0 < m^*_1 < M$, and $\delta( \overline{y} )$ over $B$ such that
 \begin{enumerate}
  \item $\neg \delta( \overline{y} ) \in \lim_{ \langle \sigma_{m_i} : i \neq i_0 \rangle + \langle \sigma_{m^*_0} \rangle } ( \mathcal{I} / B ) (\overline{y})$ and
  \item $\delta( \overline{y} ) \in \lim_{ \langle \sigma_{m_i} : i \neq i_0 \rangle + \langle \sigma_{m^*_1} \rangle } ( \mathcal{I} / B ) (\overline{y})$.
 \end{enumerate}
\end{lemma}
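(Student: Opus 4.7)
The plan is to prove the contrapositive. Suppose that for every $i_0 \in X$, every tuple $(m_i)_{i \in X \setminus \{i_0\}}$, every pair $m_0^* < m_1^* < M$, and every formula $\delta(\overline{y})$ over $B$, the limit types $\lim_{\langle \sigma_{m_i} : i \neq i_0 \rangle + \langle \sigma_{m_0^*} \rangle}(\mathcal{J}/B)$ and $\lim_{\langle \sigma_{m_i} : i \neq i_0 \rangle + \langle \sigma_{m_1^*} \rangle}(\mathcal{J}/B)$ agree on $\delta$. From this I will derive that $\mathcal{J}$ must be almost mutually indiscernible over $B$, contradicting the hypothesis. By transitivity, iterating the assumption to swap one coordinate's block at a time, the limit type $\pi_{\overline{m}} := \lim_{\langle \sigma_{m_i} \rangle_{i \in X}}(\mathcal{J}/B)$ is independent of $\overline{m} \in M^X$; denote this common type by $\pi$.

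Call a \emph{block distribution} any function $\tau : \{ (k,i) : k < K_i,\, i \in X \} \to M$ that is non-decreasing in $k$ for each fixed $i$. By a Ramsey-plus-NIP argument essentially identical to the proof of Lemma \ref{Lem_Limits}, for any $\delta(\overline{y})$ over $B$ the truth value of $\delta(b_{\overline{j}})$ stabilizes as each entry $j_{k,i}$ of $\overline{j}$ is pushed cofinally high within the block $[\omega \tau(k,i), \omega(\tau(k,i)+1))$; let $\mathrm{Lim}_\tau(\delta)$ denote this stabilized value. The key claim is that $\mathrm{Lim}_\tau(\delta) = \pi(\delta)$ for every $\tau$ and every $\delta$ over $B$. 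For constant $\tau$ (those with $\tau(k,i) = m_i$ independent of $k$), this is immediate from the first step. For general $\tau$, I induct on the number of \emph{splits} (positions where $\tau(k,i) < \tau(k+1,i)$): at each step, compare $\mathrm{Lim}_\tau$ with $\mathrm{Lim}_{\tau'}$ for $\tau'$ obtained by collapsing a single split, and use almost mutual indiscernibility over $\emptyset$ to transport a split-block configuration onto a single-block one, where the single-block agreement over $B$ forces equality.

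Granting the key claim, there are only finitely many block distributions and only finitely many indices in the tail $[\omega M, \alpha)$, so I can choose finite $J'_i \subseteq \alpha$ large enough to exclude the tail and to push each of the finitely many stabilizations past its individual threshold. Then for any increasing $\overline{j}$ in $\prod_{i \in X} (\alpha \setminus J'_i)$, $\delta(b_{\overline{j}}) = \mathrm{Lim}_{\tau^{\overline{j}}}(\delta) = \pi(\delta)$ is a constant independent of $\overline{j}$. Since $\delta$ was arbitrary over $B$, $\mathcal{J}$ is almost mutually indiscernible over $B$, contradicting the hypothesis. The main obstacle is the inductive step comparing $\mathrm{Lim}_\tau$ and $\mathrm{Lim}_{\tau'}$: the hypothesis gives only \emph{single-block} agreement over $B$, which is a priori strictly weaker than multi-block agreement, and bridging the gap requires delicately combining almost mutual indiscernibility over $\emptyset$ with the block-by-block uniformity over $B$ to carry $B$-types across block boundaries.
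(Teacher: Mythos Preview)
Your contrapositive strategy and the paper's direct argument share the same skeleton: establish that the single-block limit values $t_{\overline m}$ (for $\overline m\in M^X$) are not all equal, then walk through $M^X$ one coordinate at a time to isolate an $i_0$ at which the flip occurs. That walk (your ``transitivity'' step) is identical to the paper's ``switching one coordinate at a time.''

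The divergence is in how the first step is handled. The paper fixes a single $\delta$ witnessing failure of almost mutual indiscernibility over $B$ and asserts in one line that if every $t_{\overline m}$ for that $\delta$ equals some fixed $t$, then after deleting finitely many indices $\delta$ becomes constant on $\mathcal J$, contradicting the choice of $\delta$. You are right to be uneasy here: the values $t_{\overline m}$ concern only tuples whose $i$-th coordinates all lie in a single block, and say nothing about tuples whose $i$-th coordinates span several blocks of $\alpha$. Indeed one can manufacture examples (take an indiscernible sequence of order type $\omega\cdot 2$ in a dense linear order, with $B$ a single parameter sitting in the Dedekind gap, and $\delta(y_0,y_1)$ asserting that $y_0,y_1$ lie on the same side of that parameter) in which a particular witness $\delta$ has constant $t_{\overline m}$'s yet is not eventually constant. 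So the paper's sentence, read literally for an \emph{arbitrary} witness $\delta$, does not stand; implicitly one must range over all formulas over $B$, which is exactly what your contrapositive hypothesis gives you.

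That said, your own argument is not complete either. The block-distribution formalism and the existence of $\mathrm{Lim}_\tau(\delta)$ are fine, but the inductive comparison of $\mathrm{Lim}_\tau$ with $\mathrm{Lim}_{\tau'}$ across a collapsed split is left as a black box, and your proposed mechanism --- transport a split configuration onto a single-block one via almost mutual indiscernibility over $\emptyset$ --- does not go through as stated. Almost mutual indiscernibility over $\emptyset$ only governs parameter-free formulas, and passing from $\delta$ to a parameter-free envelope (e.g.\ by existentially quantifying out the $B$-parameters) loses the specific witness, so equality of truth values for $\delta$ itself is not recovered. To push the induction through you will need to exploit the full strength of your contrapositive hypothesis (agreement for \emph{every} formula over $B$, not just the fixed $\delta$ --- in particular for auxiliary formulas with more variables built from $\delta$), rather than relying on almost mutual indiscernibility over $\emptyset$ alone.
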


\begin{proof}
 Let $\delta( \overline{y} )$ over $B$ witness that $\mathcal{J}$ is not almost mutually indiscernible over $B$.  For each choice of $\overline{m} = \langle m_i : i \in X \rangle \in M^X$, consider the sequence of injections $\overline{\sigma}_{\overline{m}} = \langle \sigma_{m_i} : i \in X \rangle$.  By Lemma \ref{Lem_Limits}, there exists $t_{\overline{m}} < 2$ such that
 \[
  \delta^{t_{\overline{m}}}( \overline{y} ) \in \lim_{\overline{\sigma}_{\overline{m}}} ( \mathcal{I} / B ) ( \overline{y} ).
 \]
 If all values of $t_{\overline{m}}$ are equal, then, by removing finitely many elements, $\delta( \overline{y} )$ has a constant value on $\mathcal{I}$.  This contradicts the fact that $\delta$ witnesses that $\mathcal{I}$ is not almost mutually indiscernible over $B$.
 
 Therefore, there must exist $\overline{m}$ and $\overline{m}'$ such that $t_{\overline{m}} \neq t_{\overline{m}'}$.  By switching one coordinate at a time, there exists $i_0 \in X$, $\overline{m}$, and $\overline{m}'$ such that
 \begin{enumerate}
  \item $m_i = m'_i$ for all $i \neq i_0$,
  \item $m_{i_0} < m'_{i_0}$, and
  \item $t_{\overline{m}} \neq t_{\overline{m}'}$.
 \end{enumerate}
 By possibly swapping $\delta$ for $\neg \delta$, we get the desired conclusion.
\end{proof}

We use this lemma in the next section to derive the sub-additivity of $\opd$.

\section{Sub-additivity of $\opd$}

\subsection{Using Almost Mutually Indiscernible Sequences}

In this subsection, we use the machinery of almost mutually indiscernible sequences discussed above to show that op-dimension is sub-additive.  First, we prove a result analogous to Proposition 4.4 of \cite{KOUdpmin}.

\begin{prop}\label{Prop_OPRank002}
 For $\pi$ a partial type over $A$, the following are equivalent
 \begin{enumerate}
  \item $\opd(\pi) \le n$;
  \item For all $a \models \pi$, ordinals $\alpha < \omega^2$, $L < \omega$, and $\{ \langle b_{j,i} : j \in \alpha \rangle : i < L \}$ almost mutually indiscernible over $A$, there exists $I \subseteq L$ with $|I| \ge L-n$ so that $\{ \langle b_{j,i} : j \in \alpha \rangle : i \in I \}$ is almost mutually indiscernible over $A \cup \{ a \}$.
 \end{enumerate}
\end{prop}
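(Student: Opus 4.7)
The argument is the op-dimension analog of Proposition 4.4 of \cite{KOUdpmin}, substituting IRD-patterns for ICT-patterns and almost-mutual indiscernibility (Definition \ref{Defn_AlmostMutuallyIndisc}) for ordinary mutual indiscernibility. I prove both directions by contrapositive, invoking the characterization $\opd(\pi) \le n$ iff $\pi$ admits no depth-$(n+1)$, length-$\omega$ IRD-pattern (Lemma \ref{Lem_OPRank000}).

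For $(2) \Rightarrow (1)$: assume $\opd(\pi) > n$. By Lemma \ref{Lem_OPRank000} there is an IRD-pattern in $\pi$ of depth $n+1$ and length $\omega$; by compactness I stretch its length to $\omega \cdot 2$, and by a standard iterated Ramsey extraction I obtain a family $\mathcal{J} = \{\langle b_{j,i} : j \in \omega \cdot 2 \rangle : i < n+1\}$ that is mutually indiscernible over $A$ while still forming an IRD-pattern with formulas $\psi_0, \ldots, \psi_n$. Pick $a \models \pi$ realizing the IRD configuration with every change point at $\omega$: $\models \neg \psi_i(a, b_{j,i})$ for $j < \omega$ and $\models \psi_i(a, b_{j,i})$ for $\omega \le j < \omega \cdot 2$. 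For any $i < n+1$, removing any finite subset of indices leaves both the ``$\neg \psi_i$'' tail $\{j < \omega\}$ and the ``$\psi_i$'' tail $[\omega, \omega \cdot 2)$ infinite, so the $i$-th sequence cannot be $\psi_i$-indiscernible over $A \cup \{a\}$. Hence for every nonempty $I \subseteq n+1$, $\mathcal{J}|_I$ fails to be almost mutually indiscernible over $A \cup \{a\}$. With $\alpha = \omega \cdot 2 < \omega^2$ and $L = n+1$, the required $|I| \ge L - n = 1$ forces $I \ne \emptyset$, so (2) fails.

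For $(1) \Rightarrow (2)$: assume (2) fails; I build an IRD-pattern of depth $n+1$ and length $\omega$ in $\pi$ over $A$. Setting $I_0 = L$, I iterate Lemma \ref{Lem_Alternation}: while $\mathcal{J}|_{I_k}$ fails to be almost mutually indiscernible over $A \cup \{a\}$, the lemma furnishes $i_k \in I_k$, fixed block-indices $(m_{i,k})_{i \in I_k \setminus \{i_k\}}$, a pair $m^*_{0,k} < m^*_{1,k}$, and a formula $\delta_k(\bar y) = \chi_k(a, \bar y)$ (with $\chi_k$ over $A$) whose limit type over $A \cup \{a\}$ alternates as one shifts the $i_k$-coordinate between blocks $m^*_{0,k}$ and $m^*_{1,k}$; I set $I_{k+1} = I_k \setminus \{i_k\}$. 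Because no $I$ with $|I| \ge L - n$ witnesses (2), the iteration runs through $k = 0, 1, \ldots, n$, yielding distinct coordinates $i_0, \ldots, i_n$ with witnesses $\delta_0, \ldots, \delta_n$. I freeze each $\chi_k(x, \bar y)$ at fixed parameters from the coordinates $\ne i_k$ to obtain formulas $\psi_k(x, y)$ over $A$, and exploit the almost-mutual indiscernibility of $\mathcal{J}$ together with the finite-alternation property of $\psi_k$ (by NIP) to extract from the $i_k$-th sequence $\omega$ parameters $c^k_j$ on which $\psi_k(\cdot, c^k_j)$ admits a single change point whose location is controllable by the choice of realization of $\pi$. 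A compactness and Ramsey argument using the pairwise disjointness of $i_0, \ldots, i_n$ and the mutual indiscernibility across these coordinates then shows that for every $f : (n+1) \to \omega$ the partial type
\[
 \pi(x) \cup \{ \neg \psi_k(x, c^k_j) : k \le n, j < f(k) \} \cup \{ \psi_k(x, c^k_j) : k \le n, f(k) \le j < \omega \}
\]
is consistent, yielding the required IRD-pattern and contradicting $\opd(\pi) \le n$.

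The principal obstacle lies in this final assembly: combining $n+1$ alternation witnesses---each detected using the one fixed realization $a$---into an IRD-pattern whose realizations $a_f \in \pi(\UU)$ must be produced separately for every $f : (n+1) \to \omega$. The almost-mutual indiscernibility of $\mathcal{J}$ is essential, as it renders the alternations along the distinct coordinates $i_0, \ldots, i_n$ mutually independent, so that each single-coordinate change point can be placed freely without disturbing the others, delivering the finite consistency needed.
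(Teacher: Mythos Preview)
Your proposal follows essentially the same route as the paper: both directions argue by contrapositive through the IRD-pattern characterization (Lemma \ref{Lem_OPRank000}), with $(2)\Rightarrow(1)$ handled exactly as in the paper and $(1)\Rightarrow(2)$ by iterating Lemma \ref{Lem_Alternation} down through $n+1$ coordinates and then using almost mutual indiscernibility over $A$ to manufacture the realizations $a_f$ for every $f$. Two small points where the paper differs in execution: (i) after freezing, your $\psi_k$ is over $A$ together with the frozen parameters, not over $A$ alone; (ii) the paper does \emph{not} freeze---it keeps the full multi-variable $\delta_d$'s and packages parameters from all remaining sequences into moving tuples $c_{d,j}$, building the IRD-pattern of each finite length $N$ in each $\theta\in\pi$ explicitly (with careful choice of the offset $\ell$ at each stage to keep the coordinates from interfering) before invoking compactness, which makes the ``final assembly'' you correctly flag as the main obstacle more transparent than the freezing variant.
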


\begin{proof}
 (1) $\Rightarrow$ (2): We show the contrapositive, so suppose (2) fails, witnessed by $a \models \pi$ and $\{ \langle b_{j,i} : j \in \alpha \rangle : i < L \}$ almost mutually indiscernible over $A$.  Clearly $L > n$.  Fix $N < \omega$ and $\theta(x) \in \pi(x)$ arbitrary.
 
 Fix $d < L$ and let $X = \{ d, ..., L-1 \}$.  So long as $|X| \ge L-n$ (i.e., $d \le n$), by assumption, $\{ \langle b_{j,i} : j \in \alpha \rangle : i \in X \}$ is not almost mutually indiscernible over $A \cup \{ a \}$.  By Lemma \ref{Lem_Alternation}, there exists a formula $\delta_d( x; y_{k,i} )_{k < K, i \in X}$ over $A$, $\ell < \omega$, $i_0 \in X$, $m_i < \omega$ for each $i \in X \setminus \{ i_0 \}$, and $m^*_0 < m^*_1 < \omega$ such that, for all
 \[
  ((\omega \cdot m_i) + \ell) < j_{0,i} < ... < j_{K-1,i} < (\omega \cdot (m_i + 1)) \text{ for } i \in X \setminus \{ i_0 \},
 \]
 all $t < 2$, and all $((\omega \cdot m^*_t) + \ell) < j_{0,i_0} < ... < j_{K-1,i_0} < (\omega \cdot (m^*_t + 1))$, we have that
 \[
  \models \delta_d( a; b_{j_{k,i},i} )^t_{k < K, i \in X}.
 \]
 Without loss of generality (rearranging the sequences), we may assume $i_0 = d$.  If $d > 0$ and the $\sigma_{d-1}$ have been constructed, then choose $\ell < \omega$ large enough so that no instance of $\sigma_{d-1}(k,i)$ lies in the intervals between $((\omega \cdot m_i) + \ell)$ and $(\omega \cdot (m_i + 1))$.
 
 Define a function $\sigma_d : (2NK \times X) \rightarrow \alpha$.  For each $d < i < L$ and $k < 2NK$, let
 \[
  \sigma_d(k,i) = (\omega \cdot m_i) + \ell + 1 + k.  
 \]
 For $i = d$ and $k < NK$, let
 \[
  \sigma_d(k,d) = (\omega \cdot m^*_0) + \ell + 1 + k.
 \]
 For $i = d$ and $NK \le k < 2NK$, let
 \[
  \sigma_d(k,d) = (\omega \cdot m^*_1) + \ell + 1 + (k - NK).
 \]
 Notice that $\sigma_d(k,i)$ is strictly increasing in the variable $k$.  For $j < 2N$, define
 \[
  c_{d,j} = \langle b_{\sigma_d(k+Kj,i),i} : k < K, i \in X \rangle.
 \]
 
 This construction terminates when $d = n+1$.  We claim that $\delta_d$ together with $\langle c_{d,j} : j < N \rangle$ for $d \le n$ form an IRD-pattern of depth $n+1$ and length $N$ in $\theta$.
 
 By construction, for each $d \le n$, for all $j < 2N$, we have that
 \[
  \models \delta_d(a; c_{d,j}) \text{ iff } j \ge N.
 \] 
 By almost mutual indiscernibility over $A$ (and choosing our $\ell$ above sufficiently large), we get, for each $\eta : (n+1) \rightarrow N$,
 \[
  \models \exists x \left( \theta(x) \wedge \bigwedge_{d \le n, j < N} \delta_d(x; c_{d,j})^{\text{iff } \eta(d) > j} \right).
 \]
 This yields the desired conclusion.
 
 Since $N$ and $\theta$ were arbitrary, by compactness, $\opd(p) \ge n+1$.
 
 (2) $\Rightarrow$ (1): Suppose that $\opd(p) \ge n+1$, witnessed by an IRD-pattern $\overline{\psi} = \langle \psi_i(x, y_i) : i \le n \rangle$ together with $\overline{b} = \langle b_{j,i} : j < \omega, i \le n \rangle$.  Let $\alpha = \omega \cdot 2$ and let $\L'$ be the language $\L$ expanded by constants $b_{j,i}$ for $j < \alpha$ and $i \le n$ and a constant $a$.  Let $\Sigma$ be the $\L'$-theory expanding $T$ which states that
 \begin{itemize}
  \item [(i)] $a \models \pi$,
  \item [(ii)] $\{ \langle b_{j,i} : j < \alpha \rangle : i \le n \}$ is mutually indiscernible over $A$, and
  \item [(iii)] $\models \psi_i(a, b_{j,i})$ if and only if $\omega \le j < \alpha$.
 \end{itemize}
 Any finite subset of $\Sigma$ is realized (using Ramsey's Theorem for (ii)).  Therefore, this is consistent.  Finally, we show that, for all $i \le n$, $\langle b_{j,i} : j \in \alpha \rangle$ is not almost indiscernible over $A \cup \{ a \}$, witnessed by $\psi_i(a, y)$.  By (iii), $\models \psi_i(a, b_{i,j})$ if and only if $\omega \le j < \omega \cdot 2$.  Therefore, for no finite $J_0 \subseteq \alpha$ do we have that $\langle b_{j,i} : j \in (\alpha \setminus J_0) \rangle$ is $\psi_i(a, y)$-indiscernible.
\end{proof}

\begin{thm}[Sub-additivity of op-dimension]\label{Thm_Subadditivity}
 Suppose $a$ and $b$ are tuples and $A$ is a set of parameters.  Then,
 \[
  \opd(a,b/A) \le \opd(a/A) + \opd(b/A \cup \{ a \}).
 \]
\end{thm}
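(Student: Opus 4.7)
The plan is to imitate the proof of sub-additivity for dp-rank from \cite{KOUdpmin}, substituting Proposition~\ref{Prop_OPRank002} for the analogous characterization of dp-rank in terms of mutually indiscernible sequences. Set $m = \opd(a/A)$ and $k = \opd(b/A \cup \{a\})$. If either of these is $\omega$ there is nothing to prove, so I would assume both are finite. I then argue by contradiction: suppose $\opd(ab/A) \ge m + k + 1$.

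By the failure direction of Proposition~\ref{Prop_OPRank002} applied to $\tp(ab/A)$ with $n = m+k$, I obtain an ordinal $\alpha < \omega^2$, an integer $L < \omega$, and a family $\mathcal{J} = \{ \langle c_{j,i} : j \in \alpha \rangle : i < L \}$ almost mutually indiscernible over $A$ such that for every $I \subseteq L$ with $|I| \ge L - (m+k)$, the subfamily $\{ \langle c_{j,i} : j \in \alpha \rangle : i \in I \}$ is not almost mutually indiscernible over $A \cup \{a, b\}$. This is the witness I will contradict.

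Now I apply Proposition~\ref{Prop_OPRank002} twice in succession. First, since $\opd(a/A) = m$, the forward direction applied to $\tp(a/A)$, the realization $a$, and the collection $\mathcal{J}$ yields some $I \subseteq L$ with $|I| \ge L - m$ for which $\{ \langle c_{j,i} : j \in \alpha \rangle : i \in I \}$ is almost mutually indiscernible over $A \cup \{a\}$. This restricted family is clearly still almost mutually indiscernible over $A \cup \{a\}$ (restriction to a sub-index set preserves Definition~\ref{Defn_AlmostMutuallyIndisc} immediately), so I can feed it into a second application. Since $\opd(b/A \cup \{a\}) = k$, the same direction of Proposition~\ref{Prop_OPRank002} applied to $\tp(b/A \cup \{a\})$, the realization $b$, and this restricted family yields $J \subseteq I$ with $|J| \ge |I| - k \ge L - m - k = L - (m+k)$ such that $\{ \langle c_{j,i} : j \in \alpha \rangle : i \in J \}$ is almost mutually indiscernible over $A \cup \{a, b\}$. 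Since $|J| \ge L - (m+k)$, this directly contradicts the choice of $\mathcal{J}$.

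The argument is essentially bookkeeping once Proposition~\ref{Prop_OPRank002} is in hand; the only substantive point is the observation that the bound decrements compose additively — the first application gives up $m$ indices, the second gives up $k$ more — which matches the combined drop of $m+k$ permitted by the failure assumption. The main obstacle is not in this theorem itself but in the prior development: ensuring that the notion of almost mutual indiscernibility is robust enough (e.g., closed under restriction to sub-index sets and well-behaved under iterated parameter expansion) for the two applications of Proposition~\ref{Prop_OPRank002} to interlock cleanly.
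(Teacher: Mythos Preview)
Your proposal is correct and follows essentially the same approach as the paper: apply Proposition~\ref{Prop_OPRank002} first to $\tp(a/A)$ to drop $m$ indices, then to $\tp(b/A\cup\{a\})$ to drop $k$ more, concluding via the characterization in Proposition~\ref{Prop_OPRank002} that $\opd(ab/A)\le m+k$. The only cosmetic difference is that the paper argues directly by verifying condition~(2) for an arbitrary almost mutually indiscernible family rather than wrapping the argument in a contradiction.
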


\begin{proof}
 We use Proposition \ref{Prop_OPRank002} in both directions.  First, suppose that
 \[
  \opd(a/A) = n \text{ and } \opd(b/A \cup \{ a \}) = k,
 \]
 let $\alpha < \omega^2$, and let $\mathcal{J} = \{ \langle b_{j,i} : j \in \alpha \rangle : i < L \}$ be almost mutually indiscernible over $A$.  Then, by Proposition \ref{Prop_OPRank002}, there exists $I \subseteq L$ with $|I| = L - n$ so that $\{ \langle b_{j,i} : j \in \alpha \rangle : i \in I \}$ is almost mutually indiscernible over $A \cup \{ a \}$.  Now, by Proposition \ref{Prop_OPRank002} again, there exists $I' \subseteq I$ with $|I'| = L - n - k$ such that $\{ \langle b_{j,i} : j \in \alpha \rangle : i \in I' \}$ is almost mutually indiscernible over $A \cup \{ a, b \}$.  Since $\mathcal{J}$ was arbitrary, by Proposition \ref{Prop_OPRank002}, this implies that $\opd(a,b/A) \le n+k$.
\end{proof}

\subsection{Alternative Proof of Sub-additivity Using MOPs}

In this subsection, we present a sketch of an alternative proof of the fact that $\opd$ is sub-additive.\footnote{Since the result is already proven, it seems unnecessary to subject the reader to another argument in full detail.} This proof uses our $n$-multi-order properties for the analysis of op-dimension, and really amounts to one main compactness argument.

\medskip
\begin{proof}[Second proof (sketch) of Theorem \ref{Thm_Subadditivity}]  Fix $0<n<\omega$, and suppose $\opd(e_0e_1)\geq n$, where $e_0,e_1$ are elements of sorts $v_0,v_1$ in $\UU$, and on the other hand, suppose $\opd(e_0)=k_0<n$. Of course, we must show that $\opd(e_1/e_0)\geq k_1 = n-k_0$. Suppose $g = (g_0,g_1):\A_n\to\UU^{v_0}\times\UU^{v_1}$ is an indiscernible picture of $\A_n$ and $\phi(v_0v_1,u)$ is some formula of $\L$ such that: (1) $g_0(a)g_1(a)\equiv e_0e_1$ for all $a\in A$; and (2) for every $n$-multi-cut $Z = (Z_0,...,Z_{n-1})$, there are $c_0,...,c_{n-1}\in\UU^u$ such that $Z_i = \left\{a\in A: \,\,\models\phi(g_0(a),g_1(a),c_i)\right\}$ for each $i<n$.

For the compactness argument, we introduce a language $\L^+$ that accommodates several indiscernible pictures and many new constant symbols. (For brevity, we assume that $\UU$ has only one sort, and we (rather blithely) work with function symbols whose arities may be of dimension greater than one.)
\begin{itemize}
\item $\L^+$ has four sorts $X_0,X_1,Y$ and $M$ for $\A_{k_0}$, $\A_{k_1}$, $\A_n$, and $\UU$, respectively. (In particular, these sorts have symbols for all of the necessary structure coming from those models.)

\item $\L^+$ has function symbols 
$$\mathbf{g}_i = Y\to M^{v_i},\,\mathbf{f}_i:X_i\to Y,\,\mathbf{h}_i:X_i\to M^{v_i}\,(i<2).$$
For economy, we may combine $\mathbf{g}_0$ and $\mathbf{g}_1$ into a single function symbol $\mathbf{g}:Y\to M^{v_0}\times M^{v_1}$.

\item For each $i<2$, $\L^+$ has constant symbols $c_a^i$ of sort $X_i$ for each $a\in A$.

\item For each $i<2$, each $\B\in\mathrm{age}(\A_{k_i})$, and each $k_i$-multi-cut $Z = (Z_0,...,Z_{k_i-1})$ of $\B$, $\L^+$ has constant symbols $d_0^i(\B,Z)$, ..., $d_{k_i-1}^i(\B,Z)$.

\item $\L^+$ has two additional constant symbols $e^*_0,e_1^*$ of sorts $M^{v_0}$, $M^{v_1}$, respectively. 
\end{itemize}
Now, we will define a set $\Gamma$ of $\L^+$-sentences so that a model of $\Gamma$ contains indiscernible pictures witnessing that $\tp(a_1/a_0)$ has $k_1$-MOP.
\begin{enumerate}
\item For $i<2$, $\Gamma$ asserts $X_i\models\mlo_{k_i}$ (not $\mlo_{k_i}^*$), and for each $\theta(a_0,...,a_{m-1})\in\diag(\A_{k_i})$, $\theta(c_{a_0}^i,...,c_{a_{m-1}}^i)$ is in $\Gamma$. 

\item $\Gamma$ asserts that $(M,Y,\mathbf{g})$ is elementarily equivalent to $(\UU,\A_n,g)$.

\item For each $i<2$, the sentence $\bigvee_{s:k_i\overset{1:1}{\longrightarrow} n}\textsf{Emb}_i(s)$ is in $\Gamma$, where in turn, for each one-to-one map $s:k_i\to n$, $\textsf{Emb}_i(s)$ is the sentence asserting that $\mathbf{f}_i:X_i\to Y$ is an embedding up to identifying the order relations via $<_j\,\mapsto\, <_{s(j)}$.

\item For $i<2$, for each formula $\psi(w_0,...,w_{m-1})$ of $\L$ if $i=0$ (of $\L(e_0^*)$ if $i=1$), and for all 
elements $a_0,...,a_{m-1}$ and $b_0,...,b_{m-1}$ of $A$, if 
$$\qtp^{\A_{k_i}}(a_0,...,a_{m-1}) = \qtp^{\A_{k_i}}(b_0,...,b_{m-1})$$
then the sentence $I^i_{\psi,a,b} = $
$$\psi(\mathbf{f}_i(c_{a_0}^i),...,\mathbf{f}_i(c_{a_0}^i))\bic \psi(\mathbf{f}_i(c_{b_0}^i),...,\mathbf{f}_i(c_{b_0}^i)) $$
is in $\Gamma$.


\item $\psi(e_0^*)\in\Gamma$ for each $\psi(v_0)\in\tp(e_0)$. Further, $\mathbf{h}_0 = \mathbf{g}_0\circ\mathbf{f}_0$ and $\mathbf{h}_1 = \mathbf{g}_1\circ\mathbf{f}_1$.

\item For $\B\in \mathrm{age}(\A_{k_1})$ and $Z = (Z_0,...,Z_{k_1-1})$ a $k_0$-multi-cut in $\B$, the following sentence $\epsilon^1[\B,Z]$ is in $\Gamma$:
$$\bigvee\begin{cases}
\bigwedge\left\{\phi(\mathbf{g}_0(\mathbf{f}_1(c^1_b)), \mathbf{h}_1(c^1_b),d_j^1(\B,Z))^{b\in Z_j}: b\in B, j<k_1\right\}\\
\\
\bigwedge\left\{\phi(e^*_0, \mathbf{h}_1(c^1_b),d_j^1(\B,Z))^{b\in Z_j}: b\in B, j<k_1\right\}.
\end{cases}$$
\end{enumerate}
We observe that if $\M$ is a model of $\Gamma$, and $\A_{k_1}$ is identified with its representation as the set of constant symbols $\left\{c^1_a:a\in A\right\}$, then by the Pigeonhole Principle, one of two things can be true, either one of which demonstrates that $\opd(e_1/e_0)\geq k_1$:
\begin{enumerate}
\item $\mathbf{h}_1^\M:\A_{k_1}\to M^{v_1}$ is an indiscernible picture of $\A_{k_1}$ that with $\phi'(v_1,u) = \phi(e_0^*,v_1,u)$, shows that if $a\in A$, then $$\opd(\mathbf{h}_i^\M(a)/e_0^*)\geq k_1.$$
\item $\mathbf{h}_1^\M:\A_{k_1}\to M^{v_1}$ is an indiscernible picture of $\A_{k_1}$ that with $\phi''(v_1,uv_0) = \phi(v_0,v_1,u)$, shows that if $a\in A$, then $$\opd(\mathbf{h}_i^\M(a)/e_0^*)\geq k_1.$$
\end{enumerate}
In either case, $\opd(e_1/e_0)\geq k_1$ follows because $e_0^*\mathbf{h}_i^\M(a)\equiv e_0e_1$. Thus, it is enough to verify that $\Gamma$ is finitely satisfiable.

\medskip

If $\Gamma$ is \emph{not} satisfiable, then there are $\B_i\in\mathrm{age}(\A_{k_i})$ -- say that $B_i = \left\{b_0^i<_0\cdots<_0b_{N-1}^i\right\}$ -- formulas $\xi(v_0)\in\tp(e_0)$, $\psi_0(w_0,...,w_{N-1})\in\L$, and $\psi_1(w_0,...,w_{N-1})\in\L(e_0^*)$, and a sentence $\sigma$ of $Th(\UU,\A_n,g)$ such that (up to abusing notation in the transfers $a\mapsto c_a^i$) for all one-to-one $s_i:k_i\to n$ and $t_i<2$ ($i<2$),
$$\sigma,\diag(\B_0),\diag(\B_1),\mathbf{h}_0 = \mathbf{g}_0\circ\mathbf{f}_0, \mathbf{h}_1 = \mathbf{g}_1\circ\mathbf{f}_1,$$
$$\textsf{Emb}_0(s_0),\textsf{Emb}_1(s_1), \psi_0(\mathbf{h}_0B_0)^{t_0},  \psi_1(\mathbf{h}_1B_1)^{t_1}$$
implies 
$$\neg\bigwedge\left\{\epsilon^1[\B_1,Z]:\textrm{$Z$ a $k_1$-multi-cut of $\B_1$}\right\}.$$
A few moments' reflection will convince the reader that this contradicts the assumption that for every $n$-multi-cut $W = (W_0,...,W_{n-1})$ of $\A_n$, there are $c_0,...,c_{n-1}\in\UU^u$ such that $W_i = \left\{a\in A: \,\,\models \phi(g_0(a),g_1(a),c_i)\right\}$ for each $i<n$. This completes the proof sketch.
\end{proof}

\bigskip

\section{Connections to o-Minimality}


\subsection{Equivalence of $\opd$, $\dpr$, and o-Minimal Dimension}

The goal of this subsection is to show that op-dimension, dp-rank, and o-minimal dimension coincide in o-minimal theories.  For a definable set $X$, the op-dimension of $X$ is simply the op-dimension of the partial type $x \in X$, and this is denoted $\opd(X)$.  Similarly define the dp-rank.

\begin{thm}\label{Thm_oMinDim}
 If $T$ is o-minimal (where $<$ is dense) and $X$ is a definable set, then the op-dimension of $X$, the dp-rank of $X$, and the o-minimal dimension of $X$ are equal.
\end{thm}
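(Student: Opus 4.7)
The plan is to close the chain
$$\opd(X) \;\leq\; \dpr(X) \;\leq\; \dim_{\text{o-min}}(X) \;\leq\; \opd(X),$$
where the leftmost inequality is Proposition \ref{Prop_dpop}; so only the middle and right inequalities require work.

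For $\dpr(X) \leq \dim_{\text{o-min}}(X)$, I would appeal to o-minimal cell decomposition: $X$ is a finite union of cells, each of dimension at most $n := \dim_{\text{o-min}}(X)$. Dp-rank takes the maximum under finite unions and is invariant under definable bijection. Each $k$-dimensional cell is in definable bijection with an open subset of $\UU^k$, and since an o-minimal theory is dp-minimal (i.e., $\dpr(\UU) = 1$, because $<$ has finite alternation rank on any indiscernible sequence), the sub-additivity of dp-rank proved in \cite{KOUdpmin} yields $\dpr(\UU^k) \leq k$. Hence $\dpr(X) \leq n$.

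For $\dim_{\text{o-min}}(X) \leq \opd(X)$, let $n = \dim_{\text{o-min}}(X)$ and pick an $n$-dimensional cell $C \subseteq X$. Its definable image in $\UU^n$ under the cell's coordinate projection is an open set, hence contains an open box; pulling this back and applying Corollary \ref{cor:dim-monotonicity}, it suffices to show $\opd(B) \geq n$ for an open box $B = (a_1, b_1) \times \cdots \times (a_n, b_n) \subseteq \UU^n$. I would take $\Delta = \{\phi_1(v, y), \ldots, \phi_n(v, y)\}$ with $\phi_i(v, y) \equiv (v_i < y)$ and build, by induction on the tree $(2^n)^{<\omega}$, sub-boxes $B_\sigma \subseteq B$ together with cutpoints $c_i^\sigma \in \UU$. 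At each node $\sigma$, density of $<$ in each coordinate inside $B_\sigma$ lets us pick $c_i^\sigma$ so that all $2^n$ sub-sub-boxes obtained by choosing, in each coordinate $i$, either the ``below $c_i^\sigma$'' or ``above $c_i^\sigma$'' side remain non-empty open boxes $B_{\sigma \cdot \tau}$. This directly establishes $\opr_n(B, \Delta) = \infty$, whence $\opd(B) \geq n$.

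The main obstacle is merely bookkeeping in this tree construction, dispatched using the density and saturation of $\UU$. All of the substantial inputs --- o-minimal cell decomposition, dp-minimality of $\UU$, and sub-additivity of dp-rank --- are classical. The theorem is essentially a consistency check confirming that $\opd$ returns the expected dimension in the o-minimal setting, and in particular that the two preceding inequalities cannot be strict.
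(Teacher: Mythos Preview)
Your proposal is correct and follows essentially the same approach as the paper: cell decomposition plus sub-additivity for the upper bound, and an open box with coordinate-wise density for the lower bound. The only cosmetic differences are that you organize the argument as a cyclic chain (so you invoke only dp-rank sub-additivity from \cite{KOUdpmin}, whereas the paper also uses Theorem~\ref{Thm_Subadditivity} for $\opd$), and that you verify $\opr_n(B,\Delta)=\infty$ directly via the tree definition rather than by exhibiting an IRD-pattern and appealing to Lemma~\ref{Lem_OPRank000}.
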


\begin{proof}
 Suppose that $X \subseteq \UU^m$ has o-minimal dimension $\ge n$.  Then, there exists a projection $\pi : \UU^m \rightarrow \UU^n$ so that $\pi(X)$ has non-empty interior.  That is, there exists an open box $B \subseteq \pi(X)$.  Since the ordering $<$ is dense, there exists an embedding $\sigma : \omega^n \rightarrow B$.  This extends to an embedding $\sigma' : \omega^n \rightarrow X$ via $\pi^{-1}$.  Consider, for each $i < n$, the formula $\psi_i(x,y)$ that holds of $\langle a, b \rangle \in ( \UU^m )^2$ if and only if the $i$th coordinate of $\pi(a)$ is less than the $i$th coordinate of $\pi(b)$.  Then, $\psi_i$ together with $\langle \sigma(0, ..., 0, j, 0, ..., 0) : 0 < j < \omega \rangle$ ($j$ in the $i$th coordinate) form an IRD-pattern of depth $n$ in $x \in X$.  Therefore, the op-dimension of $X$ is $\ge n$.  Moreover, by Proposition \ref{Prop_dpop}, $\dpr(X) \ge \opd(X) \ge n$.
 
 Conversely, suppose the o-minimal dimension of $X \subseteq \UU^m$ is $< n$.  By Corollary \ref{cor:dim-monotonicity} (3), we may suppose $X$ is a cell.  Then, there exists a definable injection $f : X \rightarrow \UU^k$ for some $k < n$.  Hence, by Corollary \ref{cor:dim-monotonicity} (1) and (2), $\opd(X) \le \opd(\UU^k)$.  Since we are working in an o-minimal theory, the op-dimension of $\UU^1$ is $\le 1$.  Therefore, by Theorem \ref{Thm_Subadditivity}, $\opd(\UU^k) \le k < n$, hence $\opd(X) < n$.  Moreover, by sub-additivity of dp-rank (Theorem 4.8 of \cite{KOUdpmin}), $\dpr(X) \le \dpr(\UU^k) \le k < n$.
\end{proof}

This result generalizes to any theory expanding dense linear order with a good cell decomposition.  In fact, an interesting question is how does op-dimension relate to cell decomposition?  Can one develop a notion of cell decomposition from the assumption that a theory expanding dense linear order has op-dimension $\le 1$?

\begin{rem}\label{Rem_Distal}
 Notice that dp-rank and op-dimension coincide on any distal theory (see Definition 2.1 of \cite{Simon2}).  To see this, consider the characterization of op-dimension given in Theorem \ref{Thm_OPDimDpRank} together with the characterization of distality given in Lemma 2.7 of \cite{Simon2} (so called external characterization).  From here one can see that global ``point discrepancies'' cannot exist.  Since o-minimal theories are distal, this (along with the fact that dp-rank and o-minimal dimension coincide) gives another proof of Theorem \ref{Thm_oMinDim}.
\end{rem}

\subsection{$d$-Sub-interpretations in o-Minimal Structures}

In this subsection, re-consider the dimension equivalence just presented in the language of interpretations between structures. Unsurprisingly, it turns out that a ``true'' interpretation of some $\B$ in another structure $\M$ is not quite appropriate, and instead we work with a mapping of $B$ to onto a dense subset of a member of $\M^\eq$. With this adjustment, we find that if ``$d$-sub-interpretation'' of $\B$ in the quotient of an $n$-dimensional definable set of $\M$ exists, then $\opd(Th(\B)) \le n$ -- in essence, this is just a restatement of the results of the previous subsection. As a partial converse, however, we manage to show that every countable op-minimal theory $T$ (in a one-sorted language) -- meaning that $\opd(T)\leq 1$ -- is $d$-sub-interpretable (in fact, $d$-sub-definable) in 1-dimension in a pseudo-o-minimal theory.

\begin{defn}
Assume $\M = (M,<,...)$ is o-minimal. For some $0<n<\omega$, let $D\subseteq M^n$ be a definable set with interior (with respect to the product o-minimal topology), and let $E\subseteq D\times D$ be a definable equivalence relation on $D$ with quotient mapping $\pi_E:D\to D/E$. Then we shall always understand $D/E$ to be endowed with the final topology induced by $\pi_E$; that is, $U\subseteq D/E$ is open if and only if 
$\pi_E^{-1}U =\left\{d\in D: \pi_E(d)\in U\right\}$ is open in the subspace topology on $D$.
\end{defn}

We now formulate our weakened notion of interpretability of a structure $\B$ in a topological structure $\M$. 

\begin{defn}
Fix a model $\B$ of $T$, and let $\M = (M,<,...)$ be some o-minimal structure. The data of an {\em $n$-dimensional $d$-sub-interpretation $I$ of $\B$ in $\M$} is the following:
$$I = \left(X,E,\left(\phi^I(v_0,...,v_{k-1})\right)_{\phi(x_0,...x_{k-1})\in \textsf{QF}(\L)},f\right)$$
where $X\subseteq M^r$ ($n\leq r<\omega$) is definable of o-minimal dimension $n$; $E\subseteq X\times X$ is a definable equivalence relation on $X$; for each quantifier-free formula $\phi(x_0,...,x_{k-1})$ of $\L$ (i.e. $\phi\in\textsf{QF}(\L)$), $\phi^I(v_0,...,v_{k-1})$ is a formula of $\L_\M$ such that $|v_i| = r$ for each $i<k$ and $\phi^I(\M)\subseteq X^k$; and $f:B\to X/E$ is a one-to-one mapping. For these data to amount to an $n$-dimensional $d$-sub-interpretation, we require that:
\begin{itemize}
\item For each quantifier-free formula $\phi(x_0,...,x_{k-1})$ of $\L$ (i.e. each $\phi\in\textsf{QF}(\L)$), for all $b_0,...,b_{k-1}\in B$, 
$$\B\models \phi(b)\,\,\iff\,\,\M\models\exists v_0 ... v_{k-1}\left(\bigwedge_{i<k}v_i{\in} f(b_i)\wedge \phi^I(v_0,...,v_{k-1})\right).$$
\item  $f[A]$ is dense in $X/E$.
\end{itemize}
Naturally enough, we will say that $T$ is {\em $n$-dimensionally o-minimally $d$-sub-interpretable} if there are $\B\models T$, $\M$ an o-minimal structure, and an $n$-dimensional $d$-sub-interpretation of $\B$ in $\M$. When the equivalence relation $E$ is trivial (i.e. $E=1_X$), then we say ``sub-definable'' instead of ``sub-interpretable.''

We remark that there is nothing exceedingly special about o-minimality in this definition (or the previous one). Indeed, largely the same formulations would work for weakly o-minimal, pseudo-o-minimal, or (it seems) any theory with a definable topology.
\end{defn}

\begin{fact}
Let $\B\models T$, and let $I = (X,E,(\phi^I),f)$ be an $n$-dimensional $d$-sub-interpretation of $\B$ in an o-minimal structure $\M$. If $f[A] =X/E$, then $I$ is an interpretation of $\A$ in $\M$ in the classical sense.
\end{fact}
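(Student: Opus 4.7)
The plan is to unpack the two definitions and observe that, under the hypothesis $f[B]=X/E$, each piece of the $d$-sub-interpretation either is, or can be trivially massaged into, the corresponding piece of a classical interpretation of $\B$ in $\M$ (equivalently, a classical interpretation into $\M^{\mathrm{eq}}$). I read the statement as taking $A=B$ and $\A=\B$ (apparent typos).

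First, I would recall that a classical interpretation requires (a) a definable set $X\subseteq M^r$ together with a definable equivalence relation $E$ on it; (b) a \emph{bijection} $f:B\to X/E$; and (c) for each quantifier-free $\phi(x_0,\dots,x_{k-1})\in\textsf{QF}(\L)$, an $\L_\M$-definable $E$-invariant set $\phi^I\subseteq X^k$ such that
\[
 \B\models\phi(b_0,\dots,b_{k-1}) \iff \M\models\phi^I(v_0,\dots,v_{k-1})
\]
for any (equivalently, all) choices of representatives $v_i\in f(b_i)$. The hypotheses of a $d$-sub-interpretation supply (a) outright and the injective half of (b); the hypothesis $f[B]=X/E$ upgrades (b) to a genuine bijection---this is precisely where the density requirement gets strengthened to surjectivity. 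So only (c) remains to be arranged.

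For (c), I would replace each $\phi^I$ by its $E$-saturation
\[
 \widetilde{\phi^I}(v_0,\dots,v_{k-1}) \;\equiv\; \exists w_0\cdots w_{k-1}\!\left(\bigwedge_{i<k} v_i\mathrel{E} w_i \;\wedge\; \phi^I(w_0,\dots,w_{k-1})\right),
\]
which is manifestly $\L_\M$-definable and $E$-invariant by construction. Two quick checks then complete the argument. First, since $f(b_i)$ is precisely the $E$-class of any of its elements, the conditions ``$\exists v_i\in f(b_i)$'' and ``$v_i\in f(b_i)$ and $\exists w_i\in X$ with $v_i\mathrel{E}w_i$ and $w_i\in f(b_i)$'' define the same sets, so $\widetilde{\phi^I}(v_0,\dots,v_{k-1})$ (for $v_i\in f(b_i)$) holds iff there exist representatives $w_i\in f(b_i)$ satisfying $\phi^I(w)$. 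Second, the $d$-sub-interpretation's satisfaction clause now says exactly that this happens iff $\B\models\phi(b_0,\dots,b_{k-1})$, which is the classical satisfaction condition for the tuple $(X,E,(\widetilde{\phi^I}),f)$.

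There is no real technical obstacle here---the whole verification is bookkeeping. The one conceptual point worth flagging is exactly why surjectivity of $f$ (not just density) is essential: the classical satisfaction condition has to make sense for \emph{every} element of $X/E$, i.e.\ every such element must be named by some $b\in B$; mere density of $f[B]$ would leave many classes unnamed, and that slack is precisely what the ``$\exists v_i\in f(b_i)$''-style clause in the $d$-sub-interpretation was introduced to absorb. Once every class is named, the $\exists$-clause and the $E$-invariant $\forall$-clause carry exactly the same information.
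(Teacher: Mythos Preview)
The paper states this result as a \emph{Fact} and does not supply a proof; your verification is correct and is exactly the kind of routine unpacking the authors had in mind. Your observation that one must replace each $\phi^I$ by its $E$-saturation $\widetilde{\phi^I}$ to obtain $E$-invariance is the only non-tautological step, and you handle it cleanly; note that strictly speaking only the atomic $\phi$ are needed for a classical interpretation, but treating all of $\textsf{QF}(\L)$ at once does no harm.
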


\begin{thm}
Assume $T$ eliminates quantifiers (in a language with a single sort). If $T$ is $n$-dimensionally o-minimally $d$-sub-interpretable, then $\opd(T) \leq n$ -- meaning that $\opd(\{x{=}x\})\leq n$ where $x$.
\end{thm}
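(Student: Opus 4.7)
The plan is to assume $\opd(T) > n$ and derive a contradiction with the o-minimal dimension of $X$ via Theorem~\ref{Thm_oMinDim}. By Lemma~\ref{Lem_OPRank000}, the assumption $\opd(T) > n$ yields an IRD-pattern of depth $n+1$ and length $\omega$ in $\{x = x\}$ in the monster $\UU$ of $T$; absorbing any parameters into the variable tuple, we may take the pattern's formulas to lie in $\L$, and by the QE hypothesis on $T$ we may take them to be quantifier-free, say $\psi_0(x,y_0),\dots,\psi_n(x,y_n)$. The goal is then to push this pattern forward, through the sub-interpretation $I$, into $\M$.

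With these QF formulas fixed, the existence of an IRD-pattern of depth $n+1$ and length $N$ is an existential $\L$-sentence for every finite $N$. Since this sentence lies in $T$ and $\B \models T$, it holds in $\B$: for each $N$ we obtain parameters $b^N_{j,i} \in B^{|y_i|}$ and realizations $a^N_f \in B$ (one for each $f : (n{+}1) \to N$) witnessing the pattern internally in $\B$. In this way I sidestep the fact that $\B$ is not assumed saturated.

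Next, I would transfer each finite pattern to $\M$. For each $\psi_i$, let $\psi_i^I$ be the translation supplied by $I$, and define
\[
 \tilde\psi_i(v,w) \;=\; \exists v'\,\exists w'\,\bigl(v\,E\,v' \,\wedge\, w\,E\,w' \,\wedge\, \psi_i^I(v',w')\bigr),
\]
with $E$ applied coordinatewise on the tuple $w$. Pick any representative in $X$ of each class $f(a^N_f)$ and each tuple of classes $f(b^N_{j,i})$. The defining property of $I$ — the biconditional $\B \models \psi_i(a,b) \iff \M \models \exists v\in f(a),\, w\in f(b)\,(\psi_i^I(v,w))$ — guarantees that the truth values of $\tilde\psi_i$ on the chosen representatives in $\M$ match those of $\psi_i$ on the originals in $\B$. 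Hence each finite fragment of the IRD-pattern in $\B$ pulls back to an IRD-pattern of depth $n+1$ and length $N$ in the partial type $\{v \in X\}$ in $Th(\M)$. Because the $\tilde\psi_i$'s are fixed across all $N$, compactness inside $Th(\M)$ produces an IRD-pattern of length $\omega$ in $\{v \in X\}$. By Lemma~\ref{Lem_OPRank000} this forces $\opd(X) > n$ in the o-minimal theory $Th(\M)$, contradicting Theorem~\ref{Thm_oMinDim} together with the hypothesis that $X$ has o-minimal dimension $n$.

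The hardest part, I expect, is the transfer step of paragraph three: because $f$ is only a (dense) injection $B \to X/E$ rather than a surjection onto $M$, the correspondence between $\L$-truth in $\B$ and $\L_\M$-truth in $\M$ must be mediated by an existential closure over $E$-classes, so some care is needed to define $\tilde\psi_i$ in such a way that the choice of class representative is immaterial and Boolean combinations of QF formulas transfer coherently. The density of $f[B]$ plays no essential role in this direction — it is the converse $d$-sub-interpretability results (not this theorem) that exploit it. A secondary subtlety is that, since $\B$ is not assumed saturated, an $\omega$-length IRD-pattern cannot be lifted into $\B$ in one stroke, and one must instead extract finite patterns inside $\B$ by elementarity and reassemble them into an $\omega$-length pattern only after transferring to $Th(\M)$.
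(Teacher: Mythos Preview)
Your argument is correct and takes a genuinely different route from the paper's. The paper works through the $n$-MOP characterization: it fixes an indiscernible picture $g:\A_n\to\B$ witnessing $(n{+}1)$-MOP (in their index convention), and then proves a Claim that one can pass to elementary extensions $\B\preceq\B'$, $\M\preceq\M'$ so that the composite $f'\circ g'$ becomes an indiscernible picture of $\A_n$ inside $\M'$; this Claim is where the density of $f[B]$ in $X/E$ is actually invoked, and its verification is a nontrivial compactness argument in a multi-sorted expansion. You instead go through Lemma~\ref{Lem_OPRank000} and IRD-patterns, which lets you work with \emph{finite} configurations: each finite IRD-pattern is expressed by a single existential $\L$-sentence, pushed into $\B$ by elementarity, then transferred to $\M$ via the $E$-saturated formulas $\tilde\psi_i$, and only at the very end do you apply compactness inside $Th(\M)$ to reassemble an $\omega$-length pattern. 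This is more elementary, avoids the elementary-extension Claim entirely, and---as you correctly note---makes no use of the density hypothesis in this direction. The paper's approach, by contrast, keeps the proof inside the indiscernible-picture/$n$-MOP framework that the article is built around, at the cost of the extra Claim. Your transfer step is sound: since each $\psi_i$ is quantifier-free, $\psi_i^I$ is available, your $\tilde\psi_i$ is $E$-invariant by construction, and the biconditional in the definition of $d$-sub-interpretation handles both positive and negative instances, so the IRD truth-table is preserved exactly.
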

\begin{proof}
Let $\B\models T$, and let $I = (X,E,(\phi^I),f)$ be a $d$-sub-interpretation of $\B$ in an o-minimal structure $\M = (M,<,...)$. Absorbing the the parameters of the formulas $X,E, \phi^I$ into the language, we assume that $I$ is over $\emptyset$. Also, assuming $\dim(X)\leq n-1$, we show that $T$ cannot have $n$-MOP. For a contradiction, suppose (as we may, by QE) $\psi(x,y)\in \L$ is a quantifier-free formula and $g:A\to B$ is an indiscernible picture of $\A_n$ in $\B$ such that for every multi-cut $(X_0,...,X_{n-1})$, there are $b_i$ ($i<n$) such that $X_i = \left\{a:\B\models\psi(g(a),b_i)\right\}$. The following claim is a relatively straightforward consequence (by compactness) of the fact that $f[B]$ is dense in $X/E$.

\begin{claim}
There are elementary extensions $\A_n\preceq\A'$, $\B\preceq\B'$, $\M\preceq\M'$ and functions $g':A;\to B'$, and $f':B\to X(\M')/E(\M')$ such that:
\begin{enumerate}
\item $f'\subseteq f$, and $I = (X(\M'), E(\M'),(\phi^I),f')$ is a $d$-sub-interpretation of $\B'$ in $\M'$.
\item $g\subseteq g'$, and $g'$ is an indiscernible picture of $\A'$ in $\B'$ pattered on $\EM: \aa\mapsto g\aa$, and for every multi-cut $(X_0,...,X_{n-1})$, there are $b_0,...,b_{n-1}\in B'$ such that $X_i = \left\{a:\B' \models\psi(g(a),b_i)\right\}$ for each $i<n$.
\item Relative to the 0-definable structure on $X(\M')/E(\M')$, the composition $(f'\circ g')\r A$ is an indiscernible picture of $\A_n$ in $\M'$.
\end{enumerate}
\end{claim}

Before sketching a demonstration the claim, we first complete the proof the theorem from it. (For clarity, we will abuse notation now by suggesting that $x,y,w$ are really single variables rather than tuples; this a fiction due abbreviating.) As $\psi$ is quantifier-free, 
$$\B'\models \psi(b,b')\,\,\iff\,\,\M'\models\exists v,v'\left(v{\in} f'(b)\wedge v'{\in} f'(b')\wedge \psi^I(v,v')\right).$$
whenever $b,b'\in B$ are of the appropriate sorts. In particular, if $(X_0,...,X_{n-1})$ is a multi-cut in $\A_n$, then choosing $b_0,...,b_{n-1}\in B'$ appropriately, we have
\begin{align*}
X_i &= \left\{a\in A: \B'\models\psi(g'(a),b_i)\right\}\\
&=\left\{a\in A: \M'\models \exists vv'\left(v{\in} f'(g'(a))\wedge v'{\in} f'(b_i)\wedge \psi^I(v,v')\right) \right\}
\end{align*}
Thus, the indiscernible picture $(f'\circ g')\r A$ and the formula implicit above show that 
$$n\leq \opd(X)\leq \dim(X(\M')) = n-1$$
which is impossible in light of Theorem \ref{Thm_oMinDim}.


\begin{proof}[Proof (sketch) of claim]
We will work in a language with three sorts $B$, $M$, $A$ on which the symbols of $\L$, $\L_\M$, and those of $\mlo_n$, respectively, are imposed; between these sorts, we will also have function symbols $\mathbf{f}:A\to B$ and $\mathbf{g}:B\to X\subseteq M^r$. We include constants for all elements of the countable model $\A_n$. Finally, to account for defined multi-cuts, we include function symbols $\mathbf{h}_0,...,\mathbf{h}_{n-1}:A^n\to B$. Now, the truth of the claim boils down to verifying that the following set of sentences $\Gamma$ of this language is finitely-satisfiable.
\begin{itemize}
\item $\Gamma$ says $B$ is a model of $T$, $M$ is a model of $Th(\M)$, and $A$ is a model of $\mlo_n^*$ with the countable model as a substructure via the added constants.
\item For each $k<\omega$ and each quantifier-free-complete $k$-type $q(x)$ of the language of $\mlo_n$, for each formula $\phi(x)\in \tp^\B(ga)$ where $a\in q(\A_n)$, 
$$(\forall x_0...x_{k-1}\in A)\left(q(x)\cond \phi(\mathbf{g}x)\right)$$
is in $\Gamma$.

\item For each $k<\omega$ and each quantifier-free-complete $k$-type $q(x)$ of the language of $\mlo_n$, for each formula $\phi(x)$ of $\L_\M$ such that $Th(\M)$ implies $\phi\cond X^k$, for all $k$-tuples $a,b$ over the set of constants naming the countable model $\A_n$
$$q(a)\wedge q(b)\cond (\phi(\mathbf{fg}a)\bic \phi(\mathbf{fg}b))$$
is in $\Gamma$.

\item The sentence,
$$(\forall z_0...z_{n-1}\in A)(\forall x\in A)
\bigwedge_{i<n} (x<_iz_i\bic \psi(\mathbf{g}(x),\mathbf{h}_i(z)))
$$
is in $\Gamma$.

\item $\Gamma$ asserts that $\mathbf{f}$ is the mapping associated with a $d$-sub-interpretation using $X$, $E$ and $(\phi^I)_\phi$ of the $B$ in $M$.
\begin{itemize}
\item For each quantifier-free formula $\phi(x_0,...,x_{k-1})$ of $\L$, the sentence
$$\forall x\left(\phi(x)\bic\exists v_0...v_{k-1}\bigwedge_{i<k}E(v_i,\mathbf{f}x_i)\wedge \phi^I(v)\right)$$
\item Density: Suppose $X_0,...,X_{N-1}$ are the cells of $X$, each with a definable bijection $e_i:X_i\to R_i$ onto a definable rectangle $R_i\subseteq M^{d_i}$ with $d_i\leq n$ ($i<N$, $d_i\leq n$) 
$$\bigwedge_{i<N}(\forall x,y\in M^{d_i})\left[ \Pi_{\ell<N}(x_\ell,y_\ell)\subseteq R_i\cond (\exists z\in B)e(\mathbf{f}(z))\in\Pi_{\ell<N}(x_\ell,y_\ell) \right]$$
\end{itemize}

\end{itemize}
\end{proof}
This completes the proof of the theorem.
\end{proof}

An immediate consequence of the previous theorem (and Morley-ization) is the following, giving a loose characterization of any theory interpretable in an o-minimal theory as ``stable in a sufficiently loose sense.''

\begin{cor}
Let $\M$ be an o-minimal structure. For any structure $\A$, if $\A$ is interpretable in a model of $Th(\M)$, then for some $n<\omega$, $Th(\A)$ does not have $n$-MOP.
\end{cor}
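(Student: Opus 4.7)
The plan is to reduce the corollary to the preceding theorem via Morleyization. First, I would replace $\A$ by its Morleyization $\A^*$---the expansion obtained by adjoining a fresh predicate symbol $R_\phi$ for every $\L(\A)$-formula $\phi$, interpreted by $\phi$ itself in $\A$. Then $Th(\A^*)$ eliminates quantifiers, and the definable sets of $\A^*$ coincide with those of $\A$. Since $n$-MOP is witnessed by an indiscernible picture of $\A_n$ together with a single partitioned formula defining the requisite multi-cuts, and any such formula in $\L(\A)$ is represented by a quantifier-free formula in $\L(\A^*)$ defining the same set (and vice versa), $Th(\A)$ has $n$-MOP if and only if $Th(\A^*)$ does. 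It therefore suffices to produce some finite bound on $\opd(Th(\A^*))$.

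Second, any classical interpretation of $\A$ in a model $\mathcal{N}\models Th(\M)$ extends canonically to an interpretation of $\A^*$ in $\mathcal{N}$ by setting $R_\phi^I := \phi^I$. The data of the interpretation consist of a definable set $X\subseteq \mathcal{N}^r$ for some $r<\omega$, a definable equivalence relation $E$ on $X$, interpretations $\phi^I$ for each quantifier-free formula $\phi$ of $\L(\A^*)$, and a bijection $f:A\to X/E$. Let $n$ denote the o-minimal dimension of $X$; by o-minimality, $n\le r<\omega$.

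Third, I would verify that $\left(X,E,(\phi^I)_{\phi\in\textsf{QF}(\L(\A^*))},f\right)$ constitutes an $n$-dimensional $d$-sub-interpretation of $\A^*$ in $\mathcal{N}$ in the sense of the previous subsection. The coherence condition relating $\B\models \phi(\bar b)$ to the formulas $\phi^I$ is precisely part of the definition of a classical interpretation, and the density condition ``$f[A]$ dense in $X/E$'' is automatic because $f$ is surjective. Applying the preceding theorem then yields $\opd(Th(\A^*))\le n$, whence $Th(\A^*)$---and therefore $Th(\A)$---fails to have $(n+1)$-MOP, as required.

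The only mild subtleties are (i) the transparent passage of MOP through Morleyization, which is immediate because MOP is a property of the collection of definable sets rather than of a particular syntax, and (ii) the single-sort hypothesis of the preceding theorem, whose proof adapts verbatim to multi-sorted languages since it invokes only quantifier elimination and the structure of indiscernible pictures. The substantive work has already been done in the previous theorem; the corollary is essentially a reformulation under Morleyization.
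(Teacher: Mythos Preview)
Your proposal is correct and follows exactly the approach the paper intends: the authors state the corollary as ``an immediate consequence of the previous theorem (and Morley-ization),'' and you have simply spelled out those two steps—Morleyize to obtain quantifier elimination without changing the definable sets (hence without changing which $n$-MOPs hold), then observe that a classical interpretation is in particular a $d$-sub-interpretation (since a surjection is trivially dense). Your remarks on the single-sort hypothesis and the syntax-independence of MOP are appropriate caveats, but no further argument is needed.
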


Recall that a structure is pseudo-o-minimal just in case it is elementarily equivalent to an ultraproduct of o-minimal structures, and a theory is pseudo-o-minimal just in case it has a pseudo-o-minimal model. 

\begin{prop}
Let $T$ be any op-minimal theory in a countable one-sorted language $\L$ -- i.e., $\opd(T)\leq 1$. Then $T$ is 1-dimensionally $d$-sub-definable in an pseudo-o-minimal structure. 
\end{prop}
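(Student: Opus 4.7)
The proof plan is a compactness argument in an expanded multi-sorted language, in the spirit of the second proof of Theorem \ref{Thm_Subadditivity}. The idea is to write down a theory $\Gamma$ whose models deliver a $\B\models T$, a pseudo-o-minimal structure $\M$, and a dense injection $\mathbf{f}:B\to M$ interpreting each quantifier-free $\L$-formula by an $\L_M$-formula; and then to show $\Gamma$ is finitely satisfiable by building small o-minimal approximations using op-minimality of $T$.

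Set up a two-sorted language $\L^+$ with sort $B$ carrying the symbols of $\L$, sort $M$ carrying the language $\L_M=\{<\}\cup\{R_\phi:\phi\in\textsf{QF}(\L)\}$ (with $R_\phi$ a $k$-ary predicate for each $k$-ary $\phi$), and a function symbol $\mathbf{f}:B\to M$. Let $\Gamma$ assert: (i) $B\models T$; (ii) $\mathbf{f}$ is injective; (iii) for every $\phi\in\textsf{QF}(\L)$ and appropriate tuple $b$ in $B$, $\phi(b)\bic R_\phi(\mathbf{f}(b))$; (iv) on sort $M$, the $\L_M$-theory is pseudo-o-minimal, axiomatized schematically in that for each $\L_M$-formula $\psi(u,\overline{v})$ with $|u|=1$ there is a numerical bound $n_\psi$ so that each instance $\psi(M,\overline{c})$ is a union of at most $n_\psi$ intervals and points; (v) $\mathbf{f}[B]$ is dense in $(M,<)$, via the scheme $(\forall u_0<u_1 \in M)(\exists b\in B)(u_0<\mathbf{f}(b)<u_1)$. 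Any model of $\Gamma$ immediately gives a $1$-dimensional $d$-sub-definition, taking $X=M$, $E=1_M$, and $\phi^I:=R_\phi$.

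To show $\Gamma$ is finitely satisfiable, fix a finite $\Gamma_0\subseteq\Gamma$ mentioning only $\phi_1,\ldots,\phi_r\in\textsf{QF}(\L)$ and finitely many instances of the density scheme. The crucial input is the characterization of $\opd(T)\leq 1$ from Theorem \ref{Thm_OPDimDpRank} (equivalently Lemma \ref{Lem_OPRank000}): no $\phi_i$ admits an IRD-pattern of depth $2$, so on any indiscernible sequence in $\B$, the truth value of each $\phi_i(a,\overline{y}_q)$ is constant off a uniformly bounded convex partition. After a Ramsey-and-compactness step, this furnishes a finite subset $B_0\subseteq B$ and a linear order $<^*$ on $B_0$ such that each $\phi_i$ is constant on each of a finite number of $<^*$-convex cells in $B_0^{k_i}$. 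Embed $(B_0,<^*)$ order-isomorphically into a sufficiently saturated o-minimal structure $\mathcal{N}_0$ (e.g., an initial segment of a real closed field), interpret each $R_{\phi_i}$ as the corresponding finite union of cells, and add interleaving points of $B_0$ to witness the finitely many requested density requirements; this yields a witness to $\Gamma_0$.

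The principal obstacle is coherence: ensuring that the finite approximations assemble into a genuinely \emph{pseudo}-o-minimal limit $\M$, i.e., that the scheme (iv) is satisfied in the limit with a uniform cell-count bound $n_\psi$ for each $\L_M$-formula $\psi$. Countability of $\L$ gives a countable enumeration of the $R_{\phi_i}$'s; by Morley-izing $T$ if needed, one may process these one at a time, at each step extending the ambient o-minimal structure to accommodate the new predicate as a finite union of cells with a uniform bound. An ultraproduct of the approximations along a non-principal ultrafilter on the directed set of finite fragments of $\Gamma$ is then pseudo-o-minimal as an ultraproduct of o-minimal $\L_M$-structures, and a standard ultraproduct argument shows that the $B$-reduct of the resulting model realizes $T$ and that $\mathbf{f}$ provides the required dense embedding.
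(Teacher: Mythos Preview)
Your architecture matches the paper's: a two-sorted theory whose models yield the $d$-sub-definition, approximate satisfiability via o-minimal fragments, then an ultraproduct. The paper's execution differs at the key step, though: it fixes a countable $\B_0\models T$, indexes approximations by pairs $(F,D)$ with $F\subset_\fin e\diag(\B_0)$ and $D$ a finite set of density demands, and for each $(F,D)$ runs a K\"onig's Lemma argument on a tree of finite sentence-sets that do not yet obstruct o-minimality of the $M$-sort. If that tree were finite, one extracts finitely many pulled-back $\L$-formulas $\xi^k_{i,s}$ with $\opr_2(\{x{=}x\},\{\xi^k_{i,s}\})=\infty$, contradicting $\opd(T)\le 1$. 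An infinite branch then encodes a complete pseudo-o-minimal extension of $\Sigma_{F,D}$.

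Your finite-approximation step has a genuine gap. You claim a Ramsey-and-compactness step produces a finite $B_0$ and an order $<^*$ making each $\phi_i$ a bounded union of $<^*$-cells on $B_0^{k_i}$. But Theorem \ref{Thm_OPDimDpRank} only bounds alternation of each \emph{fiber} $\phi_i(a,-)$ along a single indiscernible sequence; it does not make the full relation $\phi_i\subseteq B_0^{k_i}$ a bounded union of boxes under any one order (the cut-points of the fibers may vary arbitrarily with $a$), nor does it coordinate a single $<^*$ across several $\phi_i$'s simultaneously. There is also an unresolved tension in your construction: if $B_0$ is finite, the expansion by the $R_{\phi_i}$ is trivially o-minimal, but then the $B$-sort of the ultraproduct has no reason to model $T$; if instead each approximation's $B$-sort is a genuine model of $T$, then o-minimality of the $M$-sort is precisely the hard content, and ``interpret each $R_{\phi_i}$ as the corresponding finite union of cells'' begs the question. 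The paper's contrapositive use of $\opr_2$ on a \emph{finite set} of formulas is exactly what bridges this, and your proposal does not supply an equivalent mechanism. A smaller issue: your scheme (iv) is not a first-order axiom scheme as written, since the bound $n_\psi$ is existentially quantified over $\omega$; the paper sidesteps this by producing the pseudo-o-minimal model directly rather than attempting to axiomatize pseudo-o-minimality inside $\Gamma$.
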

\begin{proof}
We fix a countable model $\B_0$ of $T$. We define a language $\L^+$ with two sorts $X,Y$ and a function symbol $\mathbf{f}:X\to Y$; further, $X$ carries the whole signature of $\L$, and for each formula $\phi(x_0,...,x_{n-1})$ of $\L$ (where the $x_i$s are single variables), let $R_\phi$ be an $n$-ary relation symbol on $Y$. Finally, let $0,1$ be constant symbols on $Y$, and let $<$ be a binary relation symbol on $Y$. Let $\L^{++}$ be the further expansion of $\L^+$ to included $\QQ$ as a set of constant symbols on $Y$ (with $0,1$ playing themselves). Let $\D$ be the set of all pairs $(q_0,q_1)\in\QQ^2$ such that $0\leq q_0<q_1\leq 1$. For each $F\subset_\fin e\diag(\B_0)$ and each $D\subset_\fin \D$, let $\Sigma_{F,D}$ be the following set of sentences of $\L^{++}$: 
\begin{itemize}
\item $\Sigma_{F,D}$ says $(Y,<)\models$ DLO, and $(q_0<q_1)\in\Sigma_{F,D}$ for all $(q_0,q_1)\in D$.
\item $\Sigma_{F,D}\models$ ``$T\cup F$ on $X$.''
\item $(\forall x\in X)( 0\leq \mathbf{f}(x)\leq 1)$ and ``$\mathbf{f}$ is one-to-one'' are in $\Sigma_{F,D}$.
\item $\left\{R_\theta(\mathbf{f}(b_0),...,\mathbf{f}(b_{n-1})):\theta(b_0,...,b_{n-1})\in F\right\}\subseteq \Sigma_{F,D}$.
\item For each $(q_0,q_1)\in D$, $(\exists x\in X)(q_0<\mathbf{f}(x)<q_1)$ is in $\Sigma_{F,D}$.
\end{itemize}
Let $\L^{++}[F,D]$ be the sub-language of $\L^{++}$ whose signature on the $Y$-part is restricted to 
$$\{<\}\cup\QQ\cup\{R_\phi:\textrm{$\phi$ is a sub-formula of a member of $F$}\}.$$
A subset $X\subseteq\L^{++}[F,D]$ is called a fragment of $\L^{++}[F,D]$ if it is closed under boolean combinations, changes of variables and taking sub-formulas.

\begin{obs}\label{obvious-obs}
Suppose that for any $F\subset_\fin e\diag(\B_0)$ and $D\subset_\fin \D$, there is a model $(\B_{F,D},\M_{F,D},f_{F,D})$ of $\Sigma_{F,D}$ in which $\M_{F,D}$ is a pseudo-o-minimal structure. Then $T$ is 1-dimensionally pseudo-o-minimally $d$-sub-definable. 
\end{obs}
\begin{proof}[Proof of Observation]
Let $\Psi$ be any non-principal ultrafilter on the set $\P_\fin(e\diag(\B))\times\P_\fin(\D)$ (where for any set $X$, $\P_\fin(X)$ is the set of finite subsets of $X$), and let $(\C,\M,f)=\Pi_{F,D}(\B_{F,D},\M_{F,D},f_{F,D})/\Psi$. Clearly, $\C = \Pi_{F,D}\B_{F,D}/\Psi$ is a model of $T$, and $\M = \Pi_{F,D}\M_{F,D}/\Psi$ is a pseudo-o-minimal expansion of $(M,<^\M,0^\M,1^\M)$ . Moreover, 
$$\left([0,1],1_{[0,1]}, \left(R_\phi\right)_{\phi\in\textsf{QF}(\L)},f\right)$$
is a $d$-sub-definition of $\B$ in $\M$.
\end{proof}


\begin{claim}
Let $F\subset_\fin e\diag(\B_0)\setminus T$, $\phi\in e\diag(\B_0)\setminus T$, and $D\subset_\fin\D$. Then $\Sigma_{F,D}$ has model $(\B,\M,f)$ such that $\M$ is pseudo-o-minimal.
\end{claim}
\begin{proof}[Proof of claim]
For a finite set $S\subset_\fin \emph{Sent}(\L^{++}[F,D])$, let us say that $S$ is $F,D$-good if there is model $(\B,\M,f)\models\Sigma_{F,D}\cup S$ such that for any $\phi(x,y)\in\L^{++}[F,D]$ with ($x$ a single variable, $x,y$ both of sort $Y$, and every sub-formula of $\phi$ in the language of $\M$), if $\phi(x,y)$ is in the fragment of $\L^{++}[F,D]$ generated by $\Sigma_{F,D}\cup S$, then $\phi$ is \emph{not} an obstruction to o-minimality of $\M$ (meaning that there is a number $k<\omega$ such that for any $c\in M^y$, $\phi(\M,c)$ is indeed equal to the union of $\leq k$ open intervals and $\leq k$ points). 

Enumerating $\emph{Sent}(\L^{++}[F,D])$ as $\{\phi_j\}_{j<\omega}$, we define a tree $W\subseteq 2^{<\omega}$ consisting of those $\sigma\in 2^{<\omega}$ such that $\left\{\phi_j^{\sigma(j)}:j<|\sigma|\right\}$ is $F,D$-good.
As $W$ is a finitely-branching tree, if it is infinite, then by K\"onig's Lemma, we recover an infinite branch $f:\omega\to 2$ of $W$, and this $f$ encodes a complete pseudo-o-minimal theory $T_{F,D}$ such that 
$$\Sigma_{F,D}\subseteq T_{F,D}\subseteq  \emph{Sent}(\L^{++}[F,D]).$$
For a contradiction, then, we assume that $W$ is finite -- in particular, $2^{<\omega}\setminus W$ has a finite set $\{\sigma_0,...,\sigma_{N-1}\}$ of minimal elements. 

Let $b = (b_0,...,b_{m-1})$ and $q = (q_0,...,q_{m'-1})$ enumerate all of the elements of $\B_0$ and $\QQ$, respectively, that appear in any $\phi_j\in \sigma_i^{-1}(1)$, $i<N$.  Without loss of generality, we may also assume that $|\sigma_i|=|\sigma_0|=\ell$ for each $i<N$.  Moreover, by the Robinson Joint Consistency Theorem, for each $i<N$, the conjunction $\bigwedge_{j<\ell}\phi_j^{\sigma_i(j)}$ is equivalent modulo $\Sigma_{F,D}$ to a conjunction $\bigwedge_{s<t_i}\psi_{i,s}$, where each $\psi_{i,s}$ is of the form 
$$\eta_{i,s}(\mathbf{f}b,q)\wedge \theta_{i,s}(b)\cond \theta'_{i,s}(\mathbf{f}b)$$
and where $\eta_{i,s}$ is a quantifier-free-complete type in the language of order, $\theta_{i,s}$ is an $\L$-formula (the language of $\B_0$), and $\theta'_{i,s}$ is a formula of $\L^{++}[F,D]$ that has no $X$-sorted variables or constants at all and does not involve $<$. 

Now, for any proper extension $\sigma_i\subset\tau\in 2^{<\omega}$ and any $(\B,\M,f)\models \Sigma_{F,D}\cup\left\{\phi_j^{\tau(j)}\right\}_{j<|\tau|}$, there are $i<N$, $s<t_i$, and $k<n$ such that the partitioned formula,
$$\theta_{i,s}'(y_k;y_0...y_{k-1}y_{k+1}...y_{n-1})$$
is an obstruction to o-minimality in $\M$. For all $i<N$, $s<t_i$, $k<n$, this formula ``pulls back'' to an $\L$-formula, $\xi_{i,s}^k(v;w_1...w_{n-1})$. (Let $\zeta_{i,s}^k(x;y_1...y_{n-1})$ be the un-pulled-back formula.) We make an easily, if tediously, verifiable observation:

\begin{obs}
Suppose there is a finite model $(\C,<_0^\C,<_1^\C)\models\mlo_2$ such that for any one-to-one map $g:C\to B_0$ and for each $i_0,s_0,k_0$,  there is multi-cut $(X_0,X_1)$ such that  for any $i_1,s_1,k_1$, one cannot choose $b_0,b_1\in B^{n-1}$, $\B_0\preceq\B$, so that $X_j = \left\{c: \xi_{i_j,s_j}^{k_j}(g(c),b_j)\right\}$ for both $j=0,1$. Then there are numbers $e(i,s,k)$ such that
$$\Sigma_{F,D}\cup\left\{\forall \yy\textrm{``$\zeta_{i,s}^k(x,\yy)$ is the union of $\leq e(i,s,k)$ points and open intervals''}\right\}_{i,s,k}.$$
is consistent.
\end{obs}

Now, since $\opd(T)\leq 1$, there must be such a finite model $(\C,<_0^\C,<_1^\C)$ of $\mlo_2$; otherwise, we would have $\opr_2(\{x{=}x\},\{\xi^k_{i,s}(v,\ww)\}_{i,s,k})=\infty$. Consequently, we have a contradiction to the definition of $\sigma_0,...,\sigma_{N-1}$, so $W$ must be infinite -- which proves completes the proof of the claim and of the proposition.
\end{proof}
\end{proof}

\section*{Acknowledgements}

We thank Pierre Simon for pointing out the fact that op-dimension and dp-rank coincide in distal theories.

\begin{bibdiv}
\begin{biblist}

\bib{Adler}{article}{
  author = {H. Adler},
   title = {Strong theories, burden, and weight},
    year = {2007},
    note = {preprint}
}

\bib{ADHMS}{article}{
  author = {M. Aschenbrenner},
  author = {A. Dolich},
  author = {D. Haskell},
  author = {H.D. MacPherson},
  author = {S. Starchenko},
   title = {Vapnik-Chervonenkis density in some theories without the independence property, I},
    note = {preprint}
}

\bib{Bodirsky}{article}{
  author = {M. Bodirsky},
   title = {New Ramsey classes from old},
    note = {preprint},
    year = {2012}
}

\bib{DGL}{article}{
  author = {A. Dolich},
  author = {J. Goodrick},
  author = {D. Lippel},
   title = {dp-Minimality: Basic facts and examples},
 journal = {Notre Dame J. Form. Log.},
  volume = {52},
  number = {3},
   pages = {267--288},
    year = {2011}
}

\bib{GuinHill}{article}{
  author = {V. Guingona},
  author = {C.D. Hill},
   title = {On VC-density over indiscernible sequences},
    note = {preprint}
}

\bib{GuinHillScow}{article}{
  author = {V. Guingona},
  author = {C.D. Hill},
  author = {L. Scow},
   title = {On classifying combinatorial properties using generalized indiscernibles},
    note = {in preparation}
}

\bib{GuinLas}{article}{
  author = {V. Guingona},
  author = {M.C. Laskowski},
   title = {On VC-minimal theories and variants},
 journal = {Archive for Mathematical Logic},
    note = {to appear}
}

\bib{Hill}{article}{
  author = {C.D. Hill},
   title = {Generalized indiscernibles as model-complete theories},
    year = {2012},
 		note = {preprint}
}

\bib{Hodges}{book}{
  author = {W. Hodges},
   title = {Model Theory},
   publisher = {Cambridge University Press},
    year = {1993}
}

\bib{KOUdpmin}{article}{
  author = {I. Kaplan},
  author = {A. Onshuus},
  author = {A. Usvyatsov},
   title = {Additivity of the dp-rank},
    note = {preprint}
}

\bib{KPT}{article}{
  author = {A. S. Kechris},
  author = {V. G. Pestov},
  author = {S. Todorcevic},
   title = {Fraisse limits, Ramsey theory, and topological dynamics of automorphism groups},
 journal = {Geom. Funct. Anal.},
  number = {15},
    year = {2005},
  number = {1},
   pages = {106–-189}
}
    
\bib{Las}{article}{
  author = {M.C. Laskowski},
   title = {Vapnik-Chervonenkis Classes of Definable Sets},
    year = {1992},
 journal = {J. London Math. Soc.},
  volume = {45},
  number = {2},
   pages = {377--384}
}

\bib{MS}{article}{
  author = {M. Malliaris},
  author = {S. Shelah},
   title = {Regularity lemmas for stable graphs},
 journal = {Trans. Amer. Math. Soc.},
}

\bib{Scow}{article}{
  author = {L. Scow},
   title = {Characterization of NIP theories by ordered graph-indiscernibles},
 journal = {Annals of Pure and Applied Logic},
  volume = {163},
    date = {2012},
   pages = {1624--1641}
}

\bib{Shelah}{book}{
   author = {S. Shelah},
   title= {Classification theory and the number of non-isomorphic models},
   publisher = {North-Holland Publishing Company},
   year = {1978}
}

\bib{Shelah715}{article}{
  author = {S. Shelah},
   title = {Classification theory for elementary classes with the dependence property - a modest beginning},
 journal = {Scientiae Math Japonicae},
  volume = {59},
  number = {2},
    year = {2004},
   pages = {265--316}
}

\bib{Shelah783}{article}{
  author = {S. Shelah},
   title = {Dependent first order theories, continued},
 journal = {Israel J Math},
  volume = {173},
    year = {2009},
   pages = {1--60}
}

\bib{Shelah863}{article}{
  author = {S. Shelah},
   title = {Strongly dependent theories},
    note = {preprint},
    year = {2009}
}

\bib{Shelah900}{article}{
  author = {S. Shelah},
   title = {Dependent theories and the generic pair conjecture},
    note = {preprint},
    year = {2012}
}

\bib{Simon}{article}{
  author = {P. Simon},
   title = {On dp-minimal ordered structures},
    year = {2011},
 journal = {J. Symbolic Logic},
  volume = {76},
  number = {2},
   pages = {448--460}
}

\bib{Simon2}{article}{
  author = {P. Simon},
   title = {Distal and non-distal NIP-theories},
    year = {2013},
 journal = {Annals of Pure and Applied Logic},
  volume = {164},
  number = {3},
   pages = {294–-318}
}

\bib{SimonBook}{book}{
  author = {P. Simon},
   title = {Lecture notes on NIP theories},
    year = {2012},
    note = {preprint}
}

\end{biblist}
\end{bibdiv}


\end{document}